\newtheorem{theorem}{Theorem}[section]
\newtheorem{lemma}[theorem]{Lemma}
\newtheorem{prop}[theorem]{Proposition}
\newtheorem{remark}[theorem]{Remark}
\newtheorem{definition}[theorem]{Definition}
\newcommand{\slim}{\mbox{\rm s-}\lim}
\newcommand{\w}[1]{\langle {#1} \rangle}
\newcommand{\ef}{ \nopagebreak \hfill $ \Box $ \vskip 3mm}
\newcommand{\be}{\begin{equation}}
\newcommand{\ee}{\end{equation}}
\newcommand{\bea}{\begin{eqnarray}}
\newcommand{\eea}{\end{eqnarray}}
\newcommand{\f}{\frac}
\newcommand{\bC}{{\mathbb C}}
\newcommand{\bR}{{\mathbb R}}
\newcommand{\bN}{{\mathbb N}}
\newcommand{\bS}{{\mathbb S}}
\newcommand{\bZ}{{\mathbb Z}}
\newcommand{\vH}{{\mathcal H}}
\newcommand{\vL}{{\mathcal L}}
\newcommand{\vN}{{\mathcal N}}
\newcommand{\ran}{\mbox{\rm ran}}
\begin{document}
\title[A Levinson's theorem]{A new Levinson's theorem for potentials with critical decay}

\author{Xiaoyao Jia}
\address[Xiaoyao Jia]{
 Department of Mathematics\\
Henan University of Sciences and Technology\\
471003 Luoyang, China}

\author{François NICOLEAU }
\address[François Nicoleau]{
Laboratoire de Mathématiques Jean Leray, UMR CNRS 6629\\
Universit\'e de Nantes\\
F-44322 Nantes Cedex 3, FRANCE}
\email{nicoleau@math.univ-nantes.fr}

\author{Xue Ping Wang}
\address[Xue Ping  Wang]{Laboratoire de Mathématiques Jean Leray, UMR CNRS 6629\\
Université de Nantes\\
44322 Nantes Cedex, France}
\email{xue-ping.wang@univ-nantes.fr}

\subjclass[2000]{35P25, 47A40, 81U10}
\keywords{Schr\"odinger operators, resolvent expansion, zero energy resonance,  spectral shift function,
Levinson's theorem }
\thanks{Research  supported in part by the French National Research Project NONAa, No. ANR-08-BLAN-0228-01}

\begin{abstract}
We study the low-energy asymptotics of the spectral shift function for Schr\"odinger operators
with potentials decaying like $O(\frac{1}{|x|^2})$. We prove a generalized Levinson's for this class of potentials in presence of zero eigenvalue and  zero
resonance.
\end{abstract}

\maketitle

\tableofcontents

\section{Introduction}

Threshold spectral analysis of Schr\"odinger operators plays an important role in many problems arising from non-relativistic quantum mechanics, such as low-energy scattering,  propagation of cold neutrons in a crystal or the Efimov effect in many-particle systems (cf. \cite{Bolle01, Newton02, Wang04}). Presence of zero resonance of Schr\"odinger operators is responsible for several striking physical phenomena.  The Levinson's theorem which relates the phase shift to the number of eigenvalues and the zero resonance  is one of the oldest topics in this domain (cf. \cite{Levinson, Ma01, Newton02}). Threshold spectral analysis is usually carried out for potentials decaying faster than $\frac{1}{|x|^2}$ (cf. \cite{JensenKato, Murata01, Wang03}).  Potentials with the critical decay  $\frac{1}{|x|^2}$ appear in many interesting situations such as spectral analysis on manifolds with conical end or ion-atom scattering for $N$-body Schr\"odinger operators. The threshold spectral properties for potentials with critical decay are quite different from those of more quickly decaying potentials and the contribution of zero resonance and zero eigenvalues to the asymptotics of the resolvent at low-energy and to the long-time expansion of wave functions are studied in \cite{Carron01, Wang02, Wang01}.
In this work, we shall study the low-energy asymptotics of some spectral shift function  and prove a generalized Levinson's theorem for  potentials with critical decay in presence of zero eigenvalue and zero resonance.
\\

Recall that for a pair of selfadjoint operators $(P, P_0)$ on some Hilbert space $\vH$, if
\begin{equation} \label{eq1.1}
 (P+ i)^{-k} -(P_0+ i)^{-k} \mbox{  is of trace class for some $k \in \bN^*$,}
\end{equation}
 the spectral shift function $\xi(\lambda)$ is defined as distribution on $\bR$ by
\begin{eqnarray}\label{fl004}
\text{Tr}~ (f(H) - f(H_0)) = - \int _{\mathbb R}
f'(\lambda)\xi(\lambda) ~d\lambda ,\quad \forall f \in \mathcal
S(\mathbb R).
\end{eqnarray}
In fact, this relation only defines  $\xi(\cdot)$ up to an additive constant. It can be fixed by, for example,  requiring $\xi(\lambda) =0$ for $\lambda $ sufficiently negative if $P$ and $P_0$ are both bounded from below.

In this paper, we are interested in the threshold spectral analysis of the  Schr\"{o}dinger operator $P = -\Delta
+v(x)$ on $L^2(\mathbb R^n)$ with $n \ge 2$ and $v(x)$  a  real  function satisfying
\begin{equation}\label{v}
v(x) = \frac{q(\theta)}{r^2} + O(\w{x}^{-\rho_0} ), \quad |x| >>1,
\end{equation}
for some  $q\in C (\bS^{n-1})$ and $\rho_0>2$. Here $x=
r\theta$ with $r=|x|$ and $\theta = \frac{x}{|x|}$. $\mathbb
S^{n-1}$ is the unit sphere . Let $-\Delta_{\bS^{n-1}}$ denote the Laplace-Beltrami operator  on
 $\mathbb{S}^{n-1}$. We assume throughout this paper that the
smallest eigenvalue $\lambda_1$ of $ -\Delta_{\bS^{n-1}}+q(\theta)$  verifies
\begin{eqnarray}\label{positive}
 \lambda_1> -\frac {1}{4} (n-2)^2.
\end{eqnarray}
This assumption ensures that the form associated to $\widetilde P_0 = -\Delta + \frac{q(\theta)}{r^2}$ on $C_0^\infty(\bR^n \setminus \{0\})$ is positive. We still denote by $\widetilde P_0$ its Friedrich's realization as selfadjoint operator on $L^2(\bR^n)$.
Notice that the condition (\ref{eq1.1}) is not satisfied  for the pair $(P, -\Delta)$ by lack of decay on $v$, nor for the pair $(P, \widetilde P_0)$ because of the critical singularity at $0$. This leads us to introduce a model operator $P_0$ in the following way. Let $0 \le \chi_j \le 1 ~( j=1,2)$ be smooth functions on $\mathbb
R^n$ such that supp $\chi_1 \subset B(0, R_1), \chi_1(x) = 1$ when
$|x| < R_0$ and
$$\chi_1(x)^2 + \chi_2(x)^2 = 1.$$
Set
\begin{eqnarray}
P_0 = \chi_1(-\Delta)\chi_1+\chi_2 \widetilde P_0 \chi_2 , \nonumber
\end{eqnarray}
on $L^2(\mathbb R^n)$. When $q(\theta)=0$, we can take $P_0 = -\Delta$ and the main results of this work still hold.
Under the  assumption (\ref{positive}), one has $P_0\ge 0$. The
operator $P$ will mainly be considered as a perturbation of
$P_0$. Under the condition that $v(x)$ is bounded and satisfies (\ref{v}) for some
 $\rho_0>n$, (\ref{eq1.1}) is satisfied for $k> \frac n 2$ and the spectral shift function $\xi(\lambda)$ for the pair $(P, P_0)$  is well  defined by (\ref{eq1.1}).
\\

High energy asymptotics of the spectral shift function of Schr\"{o}dinger operators has been
studied by many authors (see for example
\cite{Albeverio01},\cite{Pushnitski01},\cite{Robert01},\cite{Robert02},\cite{Yafaev01}).  In particular, D. Robert proved in  \cite{Robert01} that for the pair  $(P, P_0)$ introduced as above, if the potential $v$ is smooth and satisfies
\begin{eqnarray}\label{eq1.2}
|\partial_x ^\alpha (v(x)- \frac{q(\theta)}{r^2})| \le C_\alpha \langle x\rangle ^{-\rho_0 -|\alpha|}, \quad \mbox{ for $|x|$ large}.
\end{eqnarray}
for some $\rho_0 >n$, then one has

(i).  $\xi(\lambda)$ is $C^{\infty}$ in $(0,\infty)$;

(ii). $\frac{d^k}{d\lambda ^k}\xi(\lambda)$ has a complete
asymptotic expansion for $\lambda \rightarrow \infty$,
\begin{eqnarray*}
\frac{d^k}{d\lambda ^k}\xi(\lambda)\sim
\lambda^{n/2-k-1}\sum_{j\ge0}\alpha_j^{(k)}\lambda^{-j}.
\end{eqnarray*}
 We are  mainly interested in the low-energy asymptotics of  the spectral shift function $\xi(\cdot)$ for  $(P, P_0)$. Although $\xi(\cdot)$  depends on the choice of cut-offs $\chi_1$ and $\chi_2$, we shall see that physically interesting quantities in low-energy limit are independent of such choices.

The low-energy resolvent asymptotics for $P$ is studied in \cite{Wang02,Wang01} for $v$ of the form $v(x) =   \frac{q(\theta)}{r^2} + w(x)$ with $w(x)$ bounded and satisfying
$w(x) = O(\w{x}^{-\rho_0} )$ for some $\rho_0>2$.  In this work, in order to define the spectral shift function, we assume $v(x)$ to be bounded and in the last  section on Levinson's theorem we even need to assume it  smooth. Therefore, we need to slightly modify the proofs of \cite{Wang01} to suit with the present situation.

The eigenvalues of  the operator $-\Delta_{\bS^{n-1}} + q(\theta)$ play an important role in the threshold spectral analysis of $P$ (cf. \cite{Carron01,Wang02,Wang01}). Let
\begin{equation}
\sigma_{\infty}=\Bigl \{\nu; \nu=\sqrt{\lambda +\frac{(n-2)^2}{4}}, \lambda \in \sigma
(-\Delta_{\bS^{n-1}}+q(\theta))\Bigr \}.
\end{equation}
 Denote
\begin{equation}
\sigma_k=\sigma_\infty \cap ]0,k], \text{  } k \in \mathbb{N^*}.
\end{equation}
 $\sigma_1$  is closely related to the properties of zero resonance. We say that $0$ is a resonance of $P$ if the equation  $Pu=0$  admits a solution $u$ such that $\w{x}^{-1}u \in L^2$, but $u \not\in L^2$. $u$ is then called  a resonant state of $P$. For $\nu \in \sigma_{\infty}$, let $n_{\nu}$ denote the multiplicity of $\lambda_{\nu}=\nu^2-\frac{(n-2)^2}{4}$ as the eigenvalue of $-\Delta_{\bS^{n-1}}+q(\theta)$. For the class of potentials with critical decay, if zero is a resonance of $P$,  its multiplicity is at most
$\sum_{\nu \in \sigma_1} n_\nu$. Let $u$ be a resonant state of $P$. Then one has the following asymptotics for $r =|x|$ large
\begin{equation} \label{nuresonance}
u(x) =\frac{\psi(\theta)}{r^{\frac{n-2}{2} + \nu}} (1 + o(1))
\end{equation}
for some $\nu \in \sigma_1$ and $\psi \neq 0$ an eigenfunction of $-\Delta_{\bS^{n-1}}+q(\theta)$ with eigenvalue $\lambda_\nu$. We call $u$ a $\nu$-resonant state (or a $\nu$-bound state). This terminology is consistent with the historical half-bound state for rapidly decreasing potentials in three dimensional case, since the set $\sigma_1$ is then reduced to $\{\frac 1 2\}$. The multiplicity $m_\nu$  of $\nu$-resonant states is defined as the dimension of the subspace spanned by all $\psi $ such that the expansion (\ref{nuresonance}) holds for some resonant state $u$. The main result of this paper is the following generalized Levinson's theorem in presence of zero eigenvalue and zero resonance.

\begin{theorem} \label{th1.1} Let $n\ge 2$. Assume that $v$ is smooth and satisfies (\ref{eq1.2}) for some
$\rho_0> \max\{6, n+2\}$. Let $\xi(\cdot)$ denote the spectral shift function for the pair $(P, P_0)$. Then there exist some constants $c_j$ and $\beta_{n/2}$ such that
\begin{equation} \label{eq1.9}
\int_0^\infty (\xi'(\lambda) - \sum_{j=1}^{[\frac n 2]}  c_j \lambda ^{[\frac n 2]-j-1} ) d\lambda = - (\vN_- + \vN_0 + \sum_{\nu \in \sigma_1} \nu m_\nu) + \beta_{n/2}.
\end{equation}
Here $\beta_{n/2} =0$ if $n$ is odd and $c_{[\f n 2]} =0$ if $n$ is even. $\beta_j$ for $j=1,2$ is computed at the end of paper. $\vN_-$ is the number of negative eigenvalues of $P$, $\vN_0$  the multiplicity of the zero eigenvalue and $m_\nu$ that of $\nu$-resonance of $P$. It is understood  that if $0$ is not an eigenvalue (resp., if there is no $\nu$-resonant states), then $\vN_0=0$ (resp. $m_\nu =0$).
\end{theorem}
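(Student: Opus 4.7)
The plan is to evaluate both boundary values of the renormalized integral of $\xi'$ and identify them with the spectral data listed on the right-hand side. For $\lambda<0$ the model operator $P_0$ is nonnegative while $P$ has $\vN_-$ isolated eigenvalues; applying the definition $\mbox{Tr}(f(P)-f(P_0))=-\int f'(\lambda)\xi(\lambda)\,d\lambda$ to test functions supported in $(-\infty,0)$ yields $\xi(0^-)=\vN_-$. The renormalized integral on the left of (\ref{eq1.9}) then telescopes into
$$\int_0^\infty\Bigl(\xi'(\lambda)-\sum_{j=1}^{[n/2]}c_j\lambda^{[n/2]-j-1}\Bigr)d\lambda = \beta_{n/2}-\xi(0^+),$$
where the constants $c_j$ are chosen to cancel the non-integrable high-energy terms from Robert's expansion \cite{Robert01} and $\beta_{n/2}$ is the resulting finite part. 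The vanishing statements $\beta_{n/2}=0$ for $n$ odd and $c_{[n/2]}=0$ for $n$ even can be read off from integrating half-integer versus integer powers in Robert's asymptotic series.

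The core of the argument is the identification $\xi(0^+)=\vN_-+\vN_0+\sum_{\nu\in\sigma_1}\nu\, m_\nu$. For this I would combine the stationary representation
$$\xi'(\lambda)=-\frac{1}{\pi}\,\mbox{Im}\,\mbox{Tr}\bigl(R(\lambda+i0)-R_0(\lambda+i0)\bigr),\qquad\lambda>0,$$
with the low-energy resolvent expansion of \cite{Wang01}, adapted to bounded $v$ as announced above. That expansion exhibits the singular structure of $R(z)$ induced by the critical angular potential: each eigenvalue $\lambda_\nu=\nu^2-(n-2)^2/4$ of $-\Delta_{\bS^{n-1}}+q(\theta)$ with $\nu\in\sigma_1$ supports a channel in which $R(z)-R_0(z)$ behaves like $z^{-\nu}$ precisely when $0$ is $\nu$-resonant, while a genuine $L^2$ null eigenfunction of $P$ creates a simple pole at $z=0$. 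Passing to the trace, extracting the imaginary part and integrating across $\lambda=0$, the simple pole contributes $+\vN_0$ to the jump of $\xi$ and each $\nu$-resonance channel contributes $+\nu$ per independent resonant state, producing the summand $\sum_{\nu}\nu\, m_\nu$.

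The main obstacle is this jump calculation, which is where the critical-decay case departs qualitatively from the classical rapidly-decaying one. In three dimensions with $v=O(\w{x}^{-\rho_0})$, $\rho_0>3$, the only resonant exponent is $\nu=1/2$ and one recovers the familiar $1/2$ weight; here $\sigma_1$ may contain several incommensurable values $\nu$, so a Puiseux-type analysis of $R(\lambda+i0)$ with multiple fractional branches is required, with a careful separation of the $L^2$ null space of $P$ from each $\nu$-resonant subspace via the asymptotic form (\ref{nuresonance}). A further delicate point is to verify that although $P_0$ depends on the cut-offs $\chi_1,\chi_2$, the value $\xi(0^+)$ produced by this procedure is independent of them, as physically expected. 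Once the singular structure is extracted and the renormalization constants $c_j,\beta_{n/2}$ are matched against the coefficients coming from Robert's expansion, the identity (\ref{eq1.9}) follows from the telescoping formula above.
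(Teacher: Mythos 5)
Your overall skeleton agrees with the paper's: renormalize the high-energy tail with Robert's asymptotics, reduce the low-energy contribution to a residue/jump at $z=0$ via a Cauchy-type identity for $z\mapsto \mathrm{Tr}[(R(z)-R_0(z))f(P)]$, and match constants as $R\to\infty$ with $f=\chi(\cdot/R)$. But the core step --- the identification of the contribution at zero with $\vN_0+\sum_{\nu\in\sigma_1}\nu m_\nu$ --- is asserted rather than proved, and the heuristic you give for it does not survive scrutiny. A singularity $z^{-\nu}$ with $\nu\in(0,1)$ in the trace function is integrable across $\lambda=0$ and contributes nothing to the jump of $\xi$ or to the generalized residue; so "each $\nu$-resonance channel contributes $+\nu$" cannot come from the $z^{-\nu}$ behavior of $R(z)$ alone. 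The actual mechanism is that the $1/z$ pole of the trace arises only from the \emph{product} of the singular part $z_{\varsigma_j}^{-1}\Pi_{r,j}$ of $R(z)$ with the fractional-power term $z^{\nu}G_{\nu,0}\pi_\nu$ inside $\tfrac{d}{dz}R_0(z)$, via $\tfrac{d}{dz}(z^{\nu})\cdot z^{-\nu}=\nu/z$ --- that is where the weight $\nu$ comes from. Extracting the coefficient $m_\nu$ (and not something involving $F_{\nu,0}$, the cut-offs, or overlaps between distinct resonances) then requires the normalization (\ref{fl12}) of the resonant states, the orthogonality relations (\ref{caract}), Lemma \ref{reduction}, and a long sequence of cancellations showing that all cross terms (resonance--resonance with $\mu<\nu$, resonance--eigenfunction, the $T_{er}$ interaction, and the subtle combinatorial terms $T_{r;\vec\nu,\alpha,\beta,l,j}$) are $O(|z|^{-1+\epsilon})$, together with the separate $z\ln z$ bookkeeping when $1\in\sigma_1$. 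This is the entire content of Section 4 of the paper and it is the genuinely hard part of the theorem; your proposal names the difficulty ("a Puiseux-type analysis is required") but does not carry it out.

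Two further gaps. First, the stationary formula $\xi'(\lambda)=-\pi^{-1}\mathrm{Im}\,\mathrm{Tr}(R(\lambda+i0)-R_0(\lambda+i0))$ presupposes that $R-R_0$ is of trace class, which fails for $n\ge 4$ under the stated decay; the paper instead works with $\mathrm{Tr}[(R(z)-R_0(z))f(P)]$, decomposed as in (\ref{decomp}), and uses iterated resolvent identities to land in $\mathscr{S}_1$. Second, for your telescoping identity $\int_0^\infty(\xi'-\sum c_j\lambda^{\cdots})\,d\lambda=\beta_{n/2}-\xi(0^+)$ to make sense you must first prove that $\xi'\in L^1(]0,1[)$, i.e.\ that $\xi(0^+)$ exists; this is not automatic and in the paper is Theorem \ref{singularitexi}, whose proof itself relies on the expansion (\ref{Tz}) of the trace function near $z=0$. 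Your observation $\xi(0^-)=\vN_-$ and your reading of the vanishing of $\beta_{n/2}$ (odd $n$) and $c_{[n/2]}$ (even $n$) from the parity of the powers in Robert's series are both consistent with the paper's Section 5.
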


For the class of potentials under consideration, zero resonance of $P$ may appear in any space dimension with arbitrary multiplicity depending on $q$.  See \cite{Carron01,Wang02} in the case of manifolds with conical end and \cite{jia} in the case of  $\bR^n$. In \cite{jia}, the existence of zero resonance is studied. Levinson's theorem in presence of zero eigenvalue and zero resonance is mostly proved for spherically symmetric potentials by using Sturm-Liouville method or  techniques of Jost function. See \cite{Ma01,Newton02} for overviews on the topic, where the reader can also find  some discussions on  radial potentials with critical decay. The proof of Levinson's theorem for non-radial potentials is much more evolved. See \cite{Newton01} and a series of works of D. Bollé {\it et al.} \cite{Albeverio01,Bolle02,Bolle01} for potentials satisfying  $\w{x}^s v \in L^{1}(\bR^n)$ for some suitably  large $s$ depending on $n$ and for $n=1,2,3$.  For example, one needs   $s>4$ if $n=3$, $s>8$ if $n=2$ according to \cite{Bolle01}.  A formula as (\ref{eq1.9}) in its general setting seems to be new.

To prove Theorem \ref{th1.1}, we use the result of \cite{Robert01} on the high energy asymptotics of $\xi'(\lambda)$.
 The main task  of our work is to analyze the spectral shift function in a neighbourhood of $0$. Our approach is to calculate the trace of the operator-valued function $z \to (R(z) - R_0(z)) f(P)$  and its generalized residue at $0$, where $R_0(z) = (P_0-z)^{-1}$, $R(z) = (P-z)^{-1}$ and $f\in C_0^\infty(\bR)$ is equal to $1$ near $0$.  As the first step, we prove the asymptotics expansions of the resolvents $R(z)$ and $R_0(z)$ for $z$ near $0$ by adapting the methods of \cite{Wang01} to our situation. The main difficulties arise from the fact that the set $\sigma_\infty$ may be arbitrary (depending on $q(\theta)$). The interplays between the zero energy resonant states and between the zero resonant states and the zero energy eigenfunctions can  be produced  in  such  a way that their contributions may  be dominant over that of a single $\nu$-resonant state with itself.  This is overcome by carefully examining the  terms obtained from asymptotic expansion of the resolvent  and by  making use of the properties of $\nu$-resonant states.  Remark that the decay condition can be improved if $\sigma_1$ contains only one point and our proofs (both for  the resolvent expansions and for the Levinson's theorem) can be very much simplified if the potential $v(x)$ is spherically symmetric.

This work is organized as follows. In Section 2, we establish a
representation formula of the spectral shift function which will be used
to prove Levinson's theorem. In Section 3, we use the asymptotic
expansion of $\widetilde R_0(z) = (\widetilde P_0-z)^{-1}$ to get the asymptotic expansion
for the resolvents $R_0(z)$ and $R(z)$. The methods are the same as in \cite{Wang01}. We indicate only the modifications to make and omit the details of calculation. The hard part of the proof of Theorem \ref{th1.1} is to calculate the generalized residue at $0$ of the trace function $z \to Tr (R(z)-R_0(z))f(P)$, which is carried out in Section 4. Here $f$ is some smooth function with
compact support equal to $1$ near $0$. This result is used to study the low-energy
asymptotics of the derivative of the spectral shift function in
Section 5 and to prove the Levinson's theorem. \\[3mm]

{\noindent \bf Notation}.  The scalar product on
$L^2(\bR_+; r^{n-1} dr)$ and $L^2(\bR^n)$ is denoted by $\w{\cdot,
\cdot}$ and that on $L^2(\bS^{n-1})$  by $(\cdot, \cdot)$.
$H^{r,s}$, $r, s\in\bR$, denotes the weighted Sobolev
space of order $r$ with  the weight $\w{x}^{s}$. The duality
between $H^{1,s}$ and $H^{-1, -s}$ is identified with
$L^2$-product.  Denote $H^{0,s} = L^{2,s}$. $\vL({r,s};
{r',s'})$ stands for the space of continuous linear operators
from $H^{r,s}$ to $H^{r',s'}$.  The complex plane $\bC$ is slit along  positive real axis
so that $z^\nu = e^{\nu \ln z}$ and $\ln z = \log |z| + i\arg z$
with $0< \arg z < 2 \pi$ are holomorphic for $z$ near $0$ in the slit complex plane.


\section {A representation formula}

Let $(P,P_0)$ be a pair of self-adjoint operators, semi-bounded from below, in some separable Hilbert space $\mathcal{H}$.
We assume that for some $k \in \mathbb N ^*$,
\be\label{ass1}
||(P-i)^{-k} -(P_0-i)^{-k}||_{tr} < \infty ,
\ee
where $|| . ||_{tr}$ denotes the trace-class norm in $\mathcal{H}$. Under the assumption (\ref{ass1}), it is well-known that for any $f \in {\mathcal S}(\bR)$, $f(P)-f(P_0)$ is of trace class.

For $z \in \bC$ in the resolvent set of $P_0$ and $P$, we denote by $R_0 (z) =(P_0 -z)^{-1}$,
(resp. $R(z) =(P -z)^{-1}$) the resolvent of $P_0$, (resp. of $P$). Writing
\be \label{decomp}
(R(z)-R_0 (z)) f(P) = [R(z)f(P)-R_0 (z)f(P_0)] -R_0(z) (f(P)-f(P_0)),
\ee
we see immediately that $(R(z)-R_0 (z)) f(P)$ is of trace class.

\vspace{0,5cm}
\noindent
The spectral shift function (SSF, in short) $\xi (\lambda) \in L^1_{loc}(\mathbb R)$ is defined up to a constant
as a Schwartz distribution on $\bR$ by the equation
\be \label{ssf}
\text{Tr}~ (f(P) - f(P_0)) = - \int _{\mathbb R} f'(\lambda)\xi(\lambda) ~d\lambda ,\quad \forall f \in \mathcal
S(\mathbb R).
\ee

\vspace{0,2cm}\noindent
The right hand side can be interpreted as $\langle \xi ',f\rangle$,
where $\xi'$ is the derivative of $\xi$ in the sense of the
distributions, and  $\langle \cdot , \cdot\rangle$ denotes the pairing between ${\mathcal S'}(\bR)$ and ${\mathcal S}(\bR)$.


\vspace{0,5cm}\noindent
In this section, we give a representation formula for the SSF  under the following assumptions :

\vspace{0,3cm}
\begin{itemize}
 \item The spectra of $P$ and $P_0$ are purely absolutely continuous in $]0, +\infty[$ :

\begin{eqnarray}\label{ass2}
\sigma(P_0) =\sigma_{ac}(P_0) &=& [0, +\infty[, \\
\sigma_{ac}(P)&=& [0, +\infty[.
\end{eqnarray}
In particular, we assume there are no embedded eigenvalues of $P$ and $P_0$ in $]0,+\infty[$.

\vspace{0,3cm} \item The total number, $\vN_-$, of negative eigenvalues of $P$ is finite.

\vspace{0,3cm}
\item For any $f \in  C_0^\infty(\mathbb R)$, there exists some $\epsilon_0>0$, $\delta_0 <1$ and $C >0$ (depending on $f$),
such that
\be\label{ass3}
|~\text{Tr} ~[(R(z)-R_0(z))f(P)]~| \le  \frac{C}{|z|^{1+\epsilon_0}},
\ee
for  $z\in \bC$ with $|z|$ large and $z\notin \sigma(P)$ and
\be\label{ass3bis}
|~\text{Tr} ~[R_0(z) (f(P)-f(P_0))]~| \le  \frac{C}{|z|^{\delta_0}},
\ee
for $z\in \bC$ with $|z|$ small and $z\notin \sigma(P_0)$.

\vspace{0,3cm}
\item  Let  $f\in C_0^\infty(\bR)$ with $f(t) =1$ for $t$ near $0$. The generalized residue of the function $z \to \text{Tr} ~[(R(z)-R_0(z))f(P)]$  at $z=0$ is finite  in the following sense:
if we denote, for $\epsilon \ll \delta$,
$$
\gamma (\delta, \epsilon)  =   \{z\in \bC \ ; |z| = \delta,
{\rm dist } (z, \bR^+) \ge \epsilon\},
$$
we assume that
\be \label{ass4}
J_0 = - \frac{1}{2\pi i} \lim_{\delta \rightarrow 0} \lim_{ \epsilon \rightarrow 0}
\int_{\gamma (\delta, \epsilon)} \text{Tr}~[(R(z)-R_0(z))f(P)] ~dz \ \ {\rm{exists}}, \ee where $\gamma (\delta,
\epsilon)$ is positively oriented.

\vspace{0,3cm} \item The derivative of $\xi$ in the sense of distributions satisfies
 \be \label{ass5}
  \xi'(\lambda) \in L^1_{loc}(]0,\infty[). \ee

\end{itemize}

Remark that conditions (\ref{ass1}), (\ref{ass2})-(\ref{ass3bis}) imply that if $J_0$ exists for some $f$ as above, then it exists for all  $f\in C_0^\infty(\bR)$ with $f(t) =1$ for $t$ near $0$ and $J_0$ is independent of $f$. In fact, if $f_1$ and $f_2$ are two such functions, applying (\ref{decomp}) to $f = f_1 -f_2$, one sees that
$\text{Tr} ~[(R(z)-R_0(z))f(P)]$ can be decomposed into two terms, one is holomorphic in $z$ near $0$ and the other is of the order $O(\frac{1}{|z|^{\delta_0}})$, $\delta_0<1$. Hence the generalized residue of $z \to \text{Tr} ~[(R(z)-R_0(z))f(P)]$ at $0$ is equal to zero. We have the following representation formula for the spectral shift
function $\xi(\lambda)$.

\begin{theorem}\label{rfth01}
Let $f\in \mathcal C_0^\infty (\mathbb R )$ such that $f(\lambda) = 1$ for
$\lambda$ in neighborhood of $\sigma_{pp}(P)\cup \{0\}$. Under the
above assumptions, the limit
\[ \int_0^\infty f(\lambda) \ \xi'(\lambda)\ d\lambda =\lim_{\delta  \to
0^+}\lim_{R\to\infty}  \int_\delta^R f(\lambda) \ \xi'(\lambda)\ d\lambda \]
exists and one has
\begin{eqnarray}\label{rffl02}
Tr \ (f(P)-f(P_0))-\int_{0}^\infty \xi'(\lambda) \ f(\lambda) \ d\lambda = \vN_-+ J_0.
\end{eqnarray}
\end{theorem}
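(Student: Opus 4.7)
The plan is to represent both sides of (\ref{rffl02}) via a keyhole contour integral of the meromorphic function
\[
F(z) := \text{Tr}\bigl[(R(z)-R_0(z))f(P)\bigr],
\]
which is well defined on $\bC\setminus\bigl(\sigma(P)\cup\sigma(P_0)\bigr)$ thanks to the decomposition (\ref{decomp}). In the open left half-plane $F$ is holomorphic off the negative eigenvalues $\{\lambda_j\}$ of $P$, where the Riesz projector of rank $m_j$ produces a simple pole with $\text{Res}_{z=\lambda_j}F=-m_jf(\lambda_j)=-m_j$, since $f\equiv 1$ near each $\lambda_j$.

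I would apply Cauchy's theorem on the Pac-Man-type domain
\[
\Omega_{R,\delta,\epsilon} := \bigl\{z\in\bC : \delta<|z|<R\bigr\}\setminus\overline{\bigl\{z : \text{Re}\,z>0,\ |\text{Im}\,z|<\epsilon\bigr\}},
\]
whose positively oriented boundary consists of four pieces: the outer arc $|z|=R$; the lower segment $\{\lambda-i\epsilon : \delta\le\lambda\le R\}$ traversed right-to-left; the inner arc, which is exactly $\gamma(\delta,\epsilon)$ but swept in the \emph{opposite} sense from its positive orientation; and the upper segment $\{\lambda+i\epsilon\}$ traversed left-to-right. For $R$ large and $\delta$ small, $\Omega_{R,\delta,\epsilon}$ contains every negative eigenvalue of $P$, so the residue theorem gives
\[
-2\pi i\,\vN_- = \int_{|z|=R}F\,dz + \int_\delta^R\bigl[F(\lambda+i\epsilon)-F(\lambda-i\epsilon)\bigr]\,d\lambda - \int_{\gamma(\delta,\epsilon)}F\,dz.
\]

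Next I would identify the boundary-value jump of $F$ across $(0,\infty)$ in terms of $\xi'$. Applying Stone's formula $(2\pi i)^{-1}[R(\lambda+i0)-R(\lambda-i0)]=dE_P/d\lambda$ term by term in (\ref{decomp}), together with the functional-calculus identity $f(P_0)\,dE_{P_0}(\lambda)=f(\lambda)\,dE_{P_0}(\lambda)$, one establishes the distributional relation on $(0,\infty)$
\[
\frac{1}{2\pi i}\bigl[F(\lambda+i0)-F(\lambda-i0)\bigr] = f(\lambda)\xi'(\lambda) - \text{Tr}\Bigl[(f(P)-f(P_0))\,\frac{dE_{P_0}}{d\lambda}\Bigr],
\]
whose integrated form over $(\delta,R)$ reads
\[
\frac{1}{2\pi i}\int_\delta^R\bigl[F(\lambda+i0)-F(\lambda-i0)\bigr]d\lambda = \int_\delta^R f\xi'\,d\lambda - \text{Tr}\bigl[(f(P)-f(P_0))\,E_{P_0}((\delta,R))\bigr].
\]

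Finally I would pass to the three successive limits. First send $\epsilon\to 0^+$: assumption (\ref{ass5}) and limiting absorption validate the pointwise boundary values a.e. on $(0,\infty)$. Next let $R\to\infty$: the outer-arc integral is $O(R^{-\epsilon_0})$ by (\ref{ass3}) and drops out, while $E_{P_0}((\delta,R))\to I$ because $\sigma_{pp}(P_0)=\emptyset$ and $0$ is not an eigenvalue of $P_0$. Finally let $\delta\to 0^+$: the arc integral $\int_{\gamma(\delta,\epsilon)}F\,dz$ tends to $-2\pi iJ_0$ by (\ref{ass4}), and (\ref{ass3bis}) controls the behavior near the origin. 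The existence of $\lim_{\delta\to 0^+}\int_\delta^R f\xi'\,d\lambda$ is then a byproduct of the residue identity, and combining everything and rearranging yields exactly (\ref{rffl02}). The main obstacle will be the rigorous justification of the Stone's-formula identity above: since neither $R(z)$ nor $R_0(z)$ is individually trace class, one must re-do the computation through (\ref{decomp}) piece by piece, invoking the trace-class nature of $f(P)-f(P_0)$ to recombine the traces into finite quantities.
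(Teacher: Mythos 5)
Your proposal is correct and follows essentially the same route as the paper: a Cauchy/residue argument for $F(z)=\mathrm{Tr}[(R(z)-R_0(z))f(P)]$ on a keyhole contour, with the negative eigenvalues contributing $\vN_-$, the inner arc contributing $J_0$ via (\ref{ass4}), the outer arc vanishing by (\ref{ass3}), and the jump of $F$ across $(0,\infty)$ identified (through the decomposition (\ref{decomp})) with $f\xi'$ minus the spectral-measure term whose integral recovers $\mathrm{Tr}(f(P)-f(P_0))$. The only differences are cosmetic (you enclose the eigenvalues instead of excising them with small circles, and you state the boundary-jump identity abstractly where the paper computes the $\epsilon\to 0$ limits of the horizontal-segment integrals explicitly via the Poisson-kernel estimates for $G_1$, $G_2$ and Lemma \ref{rflm01}), and you correctly flag the one technical point — that the jump computation must be routed through (\ref{decomp}) since neither resolvent is separately trace class — which is exactly what the paper's argument does.
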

\begin{proof} Let us consider the application $F(z) = Tr \ [(R(z) -R_0(z))f(P)]$ which is holomorphic
outside $\sigma (P)$.   We want to deduce (\ref{rffl02}) from Cauchy's formula applied to $F(z)$.

Let us begin by some notation. Let $E_j, \; 1\le j \le k,$ be the distinct
eigenvalues of $P$ with multiplicity $m_j$.  Denote for $z_0\in \bC$ and $\delta >0$,
\begin{equation}
C (z_0; \delta) = \{~z\in \mathbb C; ~|z-z_0| = \delta~\}\quad ;
\quad
D(z_0;\delta) =\{~z\in \mathbb C; ~|z- z_0| \le \delta~\}.
\end{equation}
For $\delta >0$ small enough, $\sigma(P)\cap D(E_i;\delta) = \{E_i\}$. For $R\gg1$ and $0<\epsilon \ll\delta$,
let us denote
\begin{eqnarray}
\gamma (R, \epsilon)  &=& \{z\in \bC; \ |z| = R,\  {\rm dist }(z, \bR^+) \ge \epsilon\} \\
d^\pm (R, \delta, \epsilon)  &=&  [ \sqrt{\delta^2 - \epsilon ^2} \pm i \epsilon , \sqrt{R^2 - \epsilon ^2}
\pm i \epsilon ].
\end{eqnarray}

We denote by $\Gamma_{\delta, \epsilon,R}$ the  curve defined by
$$
\Gamma_{\delta, \epsilon,R} = \left(\bigcup_{j= 1}^k  C(E_j;\delta) \right) \cup \gamma (\delta, \epsilon)
\cup d^+(R, \delta, \epsilon) \cup \gamma (R, \epsilon) \cup d^- (R, \delta, \epsilon) .
$$
$\Gamma_{\delta, \epsilon,R}$ is positively oriented according to the anti-clockwise orientation of the big circle
$\gamma (R, \epsilon)$. Since $F(z)$ is holomorphic in the domain limited  by
$\Gamma_{\delta, \epsilon,R}$, the Cauchy integral formula  gives
$$
\nonumber \frac {1}{2\pi i}\oint_{\Gamma_{\delta,\epsilon,R}} F(z) \ d z  =  0.
$$
We split the integral into four terms
\begin{eqnarray}\label{rffl01}
\frac {1}{2i \pi }\oint _{\Gamma_{\delta,\epsilon,R}} F(z) d z &= &\sum_{j=1}^4 I_j \quad \mbox{ with }
\end{eqnarray}
\begin{eqnarray*}
I_1  =   \frac {1}{2i\pi } \oint _{\gamma(R, \epsilon)} F(z) ~ d z \ \ ,
& &
I_2 = \sum_{j= 1}^k  \frac {1}{2i\pi } \oint_{  C(E_j;\delta) } F(z) ~ d z .\\
I_3 = \frac {1}{2i\pi} \oint_{ \gamma (\delta, \epsilon)} F(z) ~ d z \ \ ,
& &
I_4  =  \frac {1}{2i\pi } \oint_{d^+ (R,\delta, \epsilon) \  \cup \ d^-(R,\delta,\epsilon)} F(z) ~ d z . \\
\end{eqnarray*}

 By condition (\ref{ass3}), one has $I_1 = O(R^{-\epsilon_0})$ when $R \to \infty$,
uniformly in $\delta$ and $\epsilon>0$. For $j=1, ...,k$, $E_j$ is an eigenvalue of $P$ with finite multiplicity $m_j$, and the spectral projection
associated to $E_j$ is given by
\begin{equation} \label{proj1}
\Pi_j = \frac {1}{2i\pi } \oint_{  C(E_j;\delta) } \ R(z) \ dz,
\end{equation}
since $C(E_j;\delta)$ is oriented according the clockwise orientation. Using that $R_0 (z)$ is holomorphic in
$D(E_j, \delta)$, we obtain :
\begin{equation} \label{proj2}
I_2 = \sum_{j= 1}^k \ Tr \ (\Pi_j f(P)) = \sum_{j= 1}^k   \ m_j f(E_j) = \sum_{j= 1}^k  \ m_j = \vN_-.
\end{equation}

 Using assumption (\ref{ass4}), we have by definition
\begin{equation} \label{residu}
\lim_{\delta \to 0} \ \lim_{\epsilon \to 0} I_3  = J_0,
\end{equation}
and as a conclusion, we have obtained :
\begin{equation} \label{calculI4}
\lim_{R \to +\infty}\ \lim_{\delta \to 0} \ \lim_{\epsilon \to 0}\  I_4  = -\vN_-- J_0.
\end{equation}

Now, let us establish a new expression for $I_4$; we split  $F(z)= F_1 (z) + F_2 (z)$ where
\begin{eqnarray}
F_1(z) &=& \text{ Tr} \ [R(z)f(P) - R_0(z) f(P_0)], \\
F_2(z) &= &\text{ Tr} \ [ R_0(z) (f(P_0)-f(P))].
\end{eqnarray}

First, let us investigate the contribution coming from $F_1 (z)$. By the definition of the SSF,
\begin{equation}
F_1(z) = -\int_\bR \xi (\lambda) \ { {\partial f} \over {\partial \lambda}} (\lambda,z) \ d\lambda
\end{equation}
where ${\displaystyle{ f(\lambda,z) = {1 \over {\lambda -z}} f(\lambda)}}$. Thus, $F_1 (z)= G_1 (z) -G_2 (z)$ with
\begin{eqnarray}
G_1(z) &=& \int_\bR \xi (\lambda) \  {1 \over {(\lambda -z)^2}} \  f(\lambda)\ d\lambda , \\
G_2(z) &=& \int_\bR \xi (\lambda)  \ {1 \over {\lambda -z}} \ f'(\lambda)\ d\lambda .
\end{eqnarray}
\noindent
For simplicity, we set $\Gamma = d^+ (R,\delta, \epsilon) \cup d^- (R,\delta, \epsilon)$.
We have
\begin{equation} \label{decompo}
\frac {1}{2i\pi } \oint_{\Gamma } \ F_1(z) \ dz = \frac {1}{2i\pi } \oint_{\Gamma } \ G_1(z) \ dz -
\frac {1}{2i\pi } \oint_{\Gamma } \ G_2(z) \ dz := (a) -(b).
\end{equation}

Let us study $(a)$. Using Fubini's theorem, we can write
\begin{equation}
(a) = \frac {1}{2i\pi } \int_\bR \xi (\lambda) \ f(\lambda)
\oint_{\Gamma } { {\partial} \over {\partial z}} \left({1 \over {\lambda -z}}\right) \ dz \ d\lambda.
\end{equation}
Set
\[
k(\lambda, s, \epsilon) =  \frac{1}{ \lambda - (\sqrt{s^2 -\epsilon^2} +i\epsilon)} -
\frac{1}{ \lambda - (\sqrt{s^2 -\epsilon^2} - i\epsilon)}.\]
One has
\begin{eqnarray}
(a)& = &\frac {1}{2i\pi } \int_\bR  \xi (\lambda) \ f(\lambda)
(  k(\lambda, R, \epsilon) -k(\lambda, \delta, \epsilon) ) \ d\lambda.
\end{eqnarray}
Let us look at the term $I_\epsilon$ related to $k(\lambda, R, \epsilon)$ and
let us show that ${\displaystyle{\lim_{\epsilon \to 0} I_{\epsilon} = \xi (R) f(R)}}$.
Fix $0<R_0 <R$ and
split $I_{\epsilon} = J_{\epsilon} + K_{\epsilon}$, where
\begin{equation}\label{Jepsilon}
J_{\epsilon} = \frac {1}{2i\pi } \int_{-\infty}^{R_0} \xi (\lambda) \ f(\lambda) \
k(\lambda, R, \epsilon)\ d\lambda, \quad  K_{\epsilon} = \frac {1}{2i\pi } \int_{R_0}^{+\infty} \xi (\lambda) \ f(\lambda) k(\lambda, R, \epsilon) \ d\lambda .
\end{equation}
By the basic properties of  the SSF,  one has $\xi f \in L^1 (\bR)$ and
\begin{equation}
| J_{\epsilon} | \leq \frac {1}{\pi } \int_{-\infty}^{R_0}  | \xi (\lambda) \ f(\lambda)|  \
\frac{\epsilon}{ (\lambda - \sqrt{R^2 -\epsilon^2})^2 +\epsilon^2} \ d\lambda  \leq C  \epsilon \ || \xi f||_1 .
\end{equation}
On the other hand, we have
\begin{equation}
K_{\epsilon} = \frac {1}{\pi } \int_{\frac{R_0 - \sqrt{R^2 -\epsilon^2}}{\epsilon}}^{+\infty} \
  (\xi f) ( \sqrt{R^2 -\epsilon^2} +\epsilon s) \ \frac{1}{s^2+1} \ ds .
\end{equation}
Under the assumption (\ref{ass5}), $\xi \in C^0 (]0,+\infty[)$, so by making use of the dominated
convergence theorem, one has
\begin{equation} \label{limitI}
\lim_{\epsilon \to 0}\  K_{\epsilon}  = \xi (R) f(R).
\end{equation}
As a conclusion, we have shown
\begin{equation} \label{limita}
\lim_{\epsilon \to 0}\  (a)  = \xi (R) f(R)-\xi (\delta) f(\delta) = -\xi (\delta),
\end{equation}
for $\delta >0$ sufficiently small and $R\gg1$.

 Now, let us study $(b)$. Using Fubini's theorem,
\begin{equation}
(b) = \int_\bR \xi (\lambda) \ f'(\lambda)
\left( \frac {1}{2i\pi } \oint_{\Gamma } {1 \over {\lambda -z}} \ dz \right)\ d\lambda.
\end{equation}
It is easy to check that ${\displaystyle{\frac {1}{2i\pi } \oint_{\Gamma } {1 \over {\lambda -z}} \ dz }}$
is uniformly bounded with respect to $\epsilon$ and
\begin{equation} \label{classical}
\lim_{\epsilon \to 0} \ \frac {1}{2i\pi } \oint_{\Gamma } {1 \over {\lambda -z}} \ dz =
{\bf 1}_{[\delta, R]} \ (\lambda) \ \ {\rm {a.e}} \ \ \lambda.
\end{equation}
So, making use of the dominated convergence theorem and then the assumption (\ref{ass5}), a straightforward application
of distribution theory gives
\begin{equation} \label{limitb}
\lim_{\epsilon \to 0} \ (b) = \int_{\delta}^R  \xi(\lambda) \ f'(\lambda) \ d\lambda =
- \xi (\delta)  -\int_{\delta}^R  \xi'(\lambda) \ f(\lambda) \ d\lambda,
\end{equation}
for $R \gg1$ and $\delta>0$ sufficiently small. As a conclusion, we have obtained,
\begin{equation} \label{limitF1}
\lim_{\epsilon \to 0} \  \frac {1}{2i\pi } \oint_{\Gamma } \ F_1(z) \ dz = \int_{\delta}^R \xi'(\lambda) \
f(\lambda) \ d\lambda.
\end{equation}

To study the contribution coming from $F_2 (z)$,  we write
\begin{equation}
\frac {1}{2i\pi } \oint_{\Gamma } \ F_2(z) \ dz = Tr \ \left( \frac {1}{2i\pi }
\oint_{\Gamma } \ R_0 (z) \ dz \ (f(P_0) -f(P)) \right).
\end{equation}
Using the spectral theorem for $P_0$ and the assumption
(\ref{ass1}), the same argument as (\ref{classical}) gives
\begin{equation}
\slim_{\epsilon \to 0} \ \frac {1}{2i\pi } \oint_{\Gamma } R_0 (z) \ dz \ = \ {\bf{1}}_{[\delta, R]} \ (P_0).
\end{equation}
Since $f(P)- f(P_0)$ is of trace class, and
\begin{equation}
\slim_{\delta \to 0, R\to\infty} \   {\bf{1}}_{[\delta, R]} \ (P_0) = Id,
\end{equation}
one can deduce (see Lemma \ref{rflm01} below) that
\begin{equation} \label{limitF2}
\lim_{R\to\infty}  \  \lim_{\delta \to 0}  \ \lim_{\epsilon \to 0} \ \frac {1}{2i\pi } \oint_{\Gamma } \ F_2(z) \
dz = Tr \ (f(P_0)-f(P)).
\end{equation}
So, using (\ref{calculI4}), (\ref{limitF1}) and (\ref{limitF2}), we obtain the result.
\end{proof}

To complete the proof of  Theorem \ref{rfth01}, we prove the following elementary lemma.

\vspace{0,2cm}
\begin{lemma}\label{rflm01}
Let $A$ be an operator of trace class. Assume that $f(\lambda)$ is an operator valued function, uniformly bounded
in $\lambda$, such that $\slim \limits_{\lambda \to  \lambda_0} f(\lambda) = B$ exists.
\par\noindent
Then, $f(\lambda)A$ converges to $BA$ in the trace norm class, as $\lambda\rightarrow \lambda_0$.
In particular,
\begin{equation}
\lim_{\lambda \to \lambda_0} Tr \ (f(\lambda)A)  =  Tr (BA).
\end{equation}
\end{lemma}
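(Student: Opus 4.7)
The plan is to exploit the density of finite-rank operators in the trace ideal together with the elementary bound $\|XY\|_{tr}\le \|X\|\,\|Y\|_{tr}$, valid whenever $X$ is bounded and $Y$ is of trace class. The uniform boundedness hypothesis gives $M:=\sup_\lambda \|f(\lambda)\|<\infty$, and since $f(\lambda)x\to Bx$ for every $x\in \mathcal H$, one automatically has $\|B\|\le M$ as well. Given $\varepsilon>0$, I would use density to pick a finite-rank operator $A_\varepsilon=\sum_{i=1}^N |\phi_i\rangle\langle\psi_i|$ with $\|A-A_\varepsilon\|_{tr}<\varepsilon$.

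Next, splitting by the triangle inequality,
\[
\|f(\lambda)A-BA\|_{tr}\le \|(f(\lambda)-B)(A-A_\varepsilon)\|_{tr}+\|(f(\lambda)-B)A_\varepsilon\|_{tr}.
\]
The first term is dominated by $(M+\|B\|)\varepsilon\le 2M\varepsilon$, uniformly in $\lambda$. For the second term, since $A_\varepsilon$ has rank at most $N$, the trace norm factors through the norms of $(f(\lambda)-B)\phi_i$:
\[
\|(f(\lambda)-B)A_\varepsilon\|_{tr}\le \sum_{i=1}^N \|(f(\lambda)-B)\phi_i\|\cdot\|\psi_i\|,
\]
and this finite sum tends to zero as $\lambda\to\lambda_0$ by the assumed strong convergence. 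One then concludes by first fixing $\varepsilon$ small and afterwards letting $\lambda\to\lambda_0$. The convergence of the traces follows at once from $|\text{Tr}\,(f(\lambda)A-BA)|\le \|f(\lambda)A-BA\|_{tr}$.

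The lemma is essentially a standard statement of continuity of left multiplication on the trace class with respect to the strong operator topology (restricted to bounded nets), so there is no real conceptual obstacle. The only subtle point deserving attention is the order of the two limits: the finite-rank approximation must be chosen before $\lambda\to\lambda_0$, so that the sum in the estimate above is a finite sum to which strong convergence genuinely applies. The whole argument occupies just a few lines.
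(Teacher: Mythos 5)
Your proof is correct and follows essentially the same route as the paper's: a finite-rank approximation of $A$ in trace norm, with the remainder controlled by the uniform bound on $f(\lambda)-B$ and the finite-rank piece controlled by strong convergence. The only difference is that you make the convergence on the finite-rank part explicit via a rank-one decomposition, which the paper leaves implicit.
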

\noindent
{\bf{Proof.}} For any $\epsilon >0$, let $F$ be a finite rank operator such
that $||A-F||_{tr} <\epsilon$. Then,
$$
||f(\lambda)A -BA||_{tr} \le ||(f(\lambda) -B)F||_{tr} + ||(f(\lambda) -B)(A-F)||_{tr} .
$$
Since $F$ is a finite rank operator and $\slim \limits_{\lambda\rightarrow \lambda_0} f(\lambda) =B$,
we have $||(f(\lambda) -B)F||_{tr} \leq C \epsilon$
for $|\lambda-\lambda_0| \le \delta$ with some $\delta >0$ small enough. Since $f(\lambda) -B$ is a uniformly bounded in
$\lambda$, we have also $||(f(\lambda) -B)(A-F)||_{tr} \leq C \epsilon$. This ends the proof.
 \ef

The remaining part of this work is to apply Theorem \ref{rfth01} to
Schr\"{o}dinger operator, using the known results on the asymptotic
expansion of $\xi'(\lambda)$ as $\lambda\rightarrow \infty $. The
main task is to study  $\xi (\lambda)$ for $\lambda$ near $0$.
If one can calculate the generalized residue $J_0$ and can show that  $\xi'(\lambda)$ is integrable in $]0, 1]$,
then one can take a family of functions  $f_R(\lambda) =
\chi(\frac{\lambda}{R})$, where $\chi$ is smooth and $0 \le \chi(s)
\le 1, ~\chi(s) =1$ for $s$ near 0, $\chi(s) =0$ for $s>1$ and
expand  both the terms
\begin{eqnarray*}
\int_0^\infty  \xi'(\lambda) f_R(\lambda) ~d\lambda,  \text{ and }
~\text{Tr} (f_R(P)- f_R(P_0))
\end{eqnarray*}
in $R$ large. Theorem \ref{th1.1} can be derived from Theorem \ref{rfth01} by comparing the two asymptotic expansions in $R$.


\section{Resolvent asymptotics  near the threshold}

Consider the Schr\"odinger operator $P = -\Delta + v(x)$ where the potential $v(x)$ is bounded and  has the
asympotic behavior
\begin{equation}
v(x) = \frac{q(\theta)}{r^2} + O(\w{x}^{-\rho_0}), \quad |x|\to \infty
\end{equation}
where $\rho_0>2$,  $(r,\theta)$ is the polar coordinates on $\mathbb{R}^n$ and $q(\cdot)$ is continuous on the
sphere. Let $\widetilde P_0= -\Delta+ \frac{q(\theta)}{r^2} $ be the homogeneous part of $P$.
Assume $n \ge 2$ and
\begin{equation}\label{sigma}
 -\Delta_{\bS^{n-1}}+q(\theta) > -\frac {1}{
4} (n-2)^2, \quad \text {on } L^2(\mathbb{S}^{n-1}).
\end{equation}
 In particular,
(\ref{sigma}) implies that the quadratic form defined by $\widetilde P_0$ on $C_0^\infty(\bR^n \setminus \{0\} )$ is
positive. We still denote by $\widetilde P_0$ its Friedrich's realization as selfadjoint operator in $L^2(\bR^n)$.
 Due to troubles related to the critical local singularity in the definition  of spectral shift function, the model operator $P_0$ we
will use in the next Section  is a modification of $\widetilde P_0$ inside some compact set. Let $0 \le \chi_j \le 1
~( j=1,2)$ be smooth functions on $\mathbb R^n$ such that supp$\chi_1 \subset B(0, R_1), \chi_1(x) = 1$ when
$|x| < R_0$ and
$$\chi_1(x)^2 + \chi_2(x)^2 = 1.$$
Define
\begin{eqnarray}
P_0 = \chi_1(-\Delta)\chi_1+\chi_2 \widetilde P_0 \chi_2 , \nonumber
\end{eqnarray}
on $L^2(\mathbb R^n)$ (if $q=0$, we take $P_0= -\Delta$). In the next Section,  $P$ will be regarded as a perturbation of $P_0$. But to study the
low energy asymptotics of $R_0(z)$ and $R(z)$, we  regard both $P$ and $P_0$ as perturbations of $\widetilde P_0$.
Denote
\begin{eqnarray}
 P_0 & = & \widetilde P_0 -W, \\
 P & = & P_0+ V = \widetilde P_0 - \widetilde W
\end{eqnarray}
where
\begin{eqnarray*}
W &=&  \chi_1^2 \frac{q(\theta)}{r^2} - \sum_{j=1} ^2 |\nabla \chi_j|^2 \\
V&=&- \sum \limits_{i=1}^2 |\nabla  \chi_i|^2
+ v - \chi_2^2 \frac{q(\theta)}{r^2} \\
\widetilde W & = &  W -V = -v + \frac{q(\theta)}{r^2}.
\end{eqnarray*}
Note that $W$ and $\widetilde W$ contain  a critical  singularity at zero. For $n \ge 3$, the Hardy's
inequality implies that they map continuously $H^1$ to $H^{-1}$.  When $ n =2$, the same remains true under the condition (\ref{positive}) if one defines $H^1$ as the form domain of $\widetilde P_0$. $V$ is bounded and  satisfies
\begin{eqnarray}
| V(x)| \le C  \langle x\rangle^{-\rho_0}, \quad \rho_0>2.
\end{eqnarray}
 $P_0$ is still a Schr\"odinger operator with a potential of critical decay at the infinity.
But it has nice threshold spectral property at zero.

Let
\begin{equation}
\sigma_{\infty}=\Bigl \{\nu; \nu=\sqrt{\lambda +\frac{(n-2)^2}{4}}, \lambda \in \sigma
(-\Delta_{\bS^{n-1}}+q(\theta))\Bigr \}.
\end{equation}
 Denote
\begin{equation}
\sigma_k=\sigma_\infty \cap ]0,k], \text{  } k \in \mathbb{N}.
\end{equation}
 For $\nu \in \sigma_{\infty}$, let $n_{\nu}$ denote
the multiplicity of $\lambda_{\nu}=\nu^2-\frac{(n-2)^2}{4}$ as the eigenvalue of $-\Delta_{\bS^{n-1}}+q(\theta)$. Let
${\varphi_{\nu}^{(j)}, \nu \in \sigma_{\infty}, 1\leq j\leq n_{\nu}}$ denote an orthonormal basis of
$L^2(\mathbb{S}^{n-1}) $ consisting of eigenfunctions of $
-\Delta_{\bS^{n-1}}+q(\theta)$:$$(-\Delta_{\bS^{n-1}}+q(\theta))\varphi_{\nu}^{(j)}= \lambda_{\nu} \varphi_{\nu}^{(j)}, \text{ }
(\varphi_{\nu}^{(i)},\varphi_{\nu}^{(j)})=\delta_{ij} .$$
Let $\pi_{\nu}$ denote the orthogonal projection in $L^2(\bR_+ \times\mathbb{S}^{n-1}; r^{n-1}dr d\theta) $ onto
the subspace spanned by the eigenfunctions of $-\Delta_{\bS^{n-1}}+q(\theta)$ associated with the eigenvalue
$\lambda_{\nu}$ :
$$\pi_{\nu} f =\sum_{j=1}^{n_{\nu}} (f,\varphi_{\nu}^{(j)})\otimes  \varphi_{\nu}^{(j)},
\text{  } f\in L^2(\bR_+ \times\mathbb{S}^{n-1}; r^{n-1}dr d\theta). $$
 Let
$$Q_{\nu} = -\frac{d^2}{dr^2} - \frac{n-1}{r} \frac{d}{dr} +
\frac{\nu^2-\frac{(n-2)^2}{4}}{r^2}, \quad \text{ in } L^2(\mathbb{R}_+; r^{n-1}dr).$$

The asymptotic expansion of the resolvent  $R_0(z) = (P_0-z)^{-1}$ near zero can be obtained as in
\cite{Wang01}, by considering $P_0$ as a perturbation of $\widetilde P_0$. For the later purpose, we also need the
asymptotic expansion of $\frac{d}{dz}\widetilde R_0(z)$ for $z$ near 0, where
 $\widetilde R_0(z)=(\widetilde P_0-z)^{-1}$ for $z\notin \sigma(P_0)$.

Decomposition the resolvent $\widetilde R_0(z)$ as
$$\widetilde R_0(z) = \sum_{\nu \in \sigma_{\infty}}(Q_{\nu}-z)^{-1} \pi _{\nu}, \text{  } z \notin \mathbb R $$
  The Schwartz kernel $K_\nu(r, \tau; z)$ of
$(Q_{\nu} - z)^{-1}$, $\Im z >0$, can be calculated explicitly.
  In fact, the Schwartz kernel of $e^{-it Q_\nu}$ is given by (see \cite{Tay2})
\begin{equation}
  \frac{1}{2it} (r\tau )^{ -\frac{n-2}{2}} e^{ -
\frac{r^2 + \tau^2}{4it}  -i \frac{\pi \nu}{2}} J_\nu(
\frac{r\tau }{2t}), \quad t\in \bR,
\end{equation}
where $J_\nu(\cdot)$ is the Bessel function of the first
kind of order $\nu$. Since
\[
(Q_\nu -z)^{-1} = i \int_0^\infty e^{-it (Q_\nu-z)} \; dt
\]
for $\Im z>0$,  the Schwartz kernel of $(Q_\nu -z)^{-1}$ is
\begin{eqnarray}\label{Knu}
K_\nu(r, \tau  ; z) & = &  (r\tau )^{ -\frac{n-2}{2}}\int_0^\infty e^{ -
\frac{r^2 + \tau^2}{4it} + izt -i \frac{\pi \nu}{2}} J_\nu(
\frac{r\tau }{2t}) \;  \frac{dt}{2t} \\
& = &  (r\tau )^{ -\frac{n-2}{2}}\int_0^\infty e^{
\frac{i \rho }{t} + iz r \tau t -i \frac{\pi \nu}{2}} J_\nu(
\frac{1}{2t}) \;  \frac{dt}{2t} \nonumber
\end{eqnarray}
for $\Im z > 0$, where
$$\rho=\rho(r,\tau) \equiv \frac{r^2+\tau^2}{4r \tau}.$$
Note that the formula of $K_\nu(r, \tau  ; z)$ used in  \cite{Carron01, Wang02} contains a sign error.

The asymptotic expansion for $ (Q_\nu -z)^{-1} $ as $z\to 0$ and $\Im z>0$ is deduced from (\ref{Knu}) by splitting the last integral into two parts according to $t\in]0, 1$ or  $t\in[1, \infty[$. The integral for $t\in]0,1]$ gives rise to a formal power series expansion in $z$ near $0$. The expansion corresponding to the integral of $t\in[1, \infty[$ needs a lengthy calculation. See \cite{Wang02}. Set

$$f(s;r,\tau,\nu) = D_{\nu} (r,\tau) \int_{-1}^1 e^{i(\rho +\theta/2)s} (1-\theta^2)^{\nu-1/2}d\theta, \text{ } \nu \geq 0,$$
with
\begin{eqnarray}
D_{\nu}(r, \tau) &=& a_{\nu} (r\tau)^{- \frac{(n-2)}{2}}, \text{ } a_{\nu} = \frac{e^{-i \pi \nu /2}}{2^{2\nu +
1}\pi^{1/2} \Gamma (\nu +1/2)}.
\end{eqnarray}
Then $f$ can be expanded in a convergent power series in $s$
\begin{equation}
f(s;r,\tau,\nu) = \sum^{\infty}_{j=0} s^j f_j(r,\tau,\nu), s\in \mathbb{R},
\end{equation}
 with
\begin{equation} \label{fj}
f_j (r,\tau,\nu) = (r\tau)^{- \frac{1}{2}(n-2)} P_{j,\nu}(\rho),
\end{equation}
  with $P_{j,\nu}(\rho)$ a polynomial in $\rho$ of degree j:
$$P_{j,\nu}(\rho) = \frac{i^j a_{\nu}}{j!} \int_{-1}^1 (\rho + \frac{1}{2} \theta)^j
(1-\theta^2)^{\nu - \frac{1}{2}} d \theta.$$ In particular,
\begin{eqnarray}
f_0(r,\tau,\nu) &=& d_{\nu} (r \tau)^{-\frac{n-2}{2}},\text{ }d_{\nu} =  \frac{e^{-\frac{1}{2}i \pi \nu}}{2
^{2\nu + 1}
\Gamma(\nu + 1)}; \nonumber \\
 f_1(r,\tau,\nu) &=& i d_{\nu} (r
\tau)^{-\frac{n-2}{2}}\rho.\nonumber
\end{eqnarray}

 In \cite{Wang02, Wang01}, the expansion of $(\widetilde P_0 - z)^{-1}$  is given for $z$ near $0$. For the later use, we need the derivation of this expansion. By an abuse of notation, we denote by the same letter $F$  an operator on $L^2(\bR_+, r^{n-1} dr)$ and its distributional kernel.

For $\nu \in \sigma_{\infty}$, denote $[\nu]$ the integral part of $\nu$ and $\nu' = \nu -[\nu]$. Set
\begin{equation}
z_{\nu} = \left\{ \begin{aligned}
                   z^{\nu^{\prime}}, \text{ if } \nu \notin
                   \mathbb{N}, \\
                   z\ln z, \text{ if } \nu \in \mathbb{N}.
                   \end{aligned} \right. \nonumber
\end{equation}
For $\nu > 0$, let $[\nu]_-$ be the largest integer strictly less than $\nu$. When $\nu = 0$, set $[\nu]_- = 0$.
Define $\delta _{\nu}$ by $\delta_{\nu} =1$, if $\nu \in \sigma_{\infty}\cap \mathbb{N} $, $\delta_{\nu} = 0$,
otherwise. One has $[\nu] = [\nu]_- + \delta_{\nu}$.

\begin{prop}\label{lrapp01} Assume the condition (\ref{sigma}). One has

(a).  The following asymptotic expansion holds for $z$ near $0$ with $\Im z
>0$.
\begin{eqnarray} \label{res1}
\widetilde R_0(z) & = & \sum_{j=0}^{N} z^{j} F_j + \sum_{\nu \in \sigma_N}  \sum_{j=[\nu]_-}^{N-1} z_{\nu} z^j
G_{\nu,j+ \delta_\nu }\pi_\nu  + \widetilde R_0^{(N)}(z),
\end{eqnarray}
 in $ \mathcal L(-1,s; 1, -s), \;  s > 2N + 1$. The remainder term $R_0^{(N)}(z)$ can be estimated by
$$\widetilde R_0^{(N)}(z) = O (|z|^{N+\epsilon}) \in \mathcal {L} (-1,s;1,-s), s>2N+1,  $$
for some $\epsilon>0$ and $F_j$ is of the form
\begin{eqnarray}
F_j &= & \sum_{\nu \in \sigma_{\infty}} F_{\nu, j} \pi_\nu  \in \mathcal L(-1,s; 1, -s),\quad  s > 2j+1.
\end{eqnarray}
 $F_{\nu,j}$  and $G_{\nu, j}$ can be explicitly calculated from the asymptotic expansion of  $K_\nu(r, \tau, z)$ in $z$. In particular,
\begin{equation}
G_{\nu,j}(r,\tau)= \left\{ \begin{aligned}
         b_{\nu^{\prime},j}(r\tau)^{j+\nu^{\prime}} f_{j-[\nu]}( r,\tau ;\nu), ~\nu \notin \mathbb{N} \\
                  -\frac{(ir\tau)^j}{j!} f_{j-[\nu]}(r,\tau ;\nu),
                  ~\nu \in \mathbb{N}
                          \end{aligned} \right. \nonumber
\end{equation}
with $f_k$ defined by (\ref{fj}) and
\begin{eqnarray}\label{srfl02}
b_{\nu ^{\prime},j} = - \frac{i^j e^{-i\nu^{\prime} \pi/2}\Gamma(1 - \nu^{\prime})}
{\nu^{\prime}(\nu^{\prime}+1)\cdots(\nu^{\prime}+j)},
\end{eqnarray}
for $ 0<\nu^{\prime} < 1$.

(b). The expansion (\ref{res1}) can be differentiated with respect to $z$ and one has
\[
\frac{d}{dz}\widetilde R_{0}^{(N)}(z)  =  O( |z|^{N -1+\epsilon})  \in \mathcal L(-1,s;1, -s), \quad  s > 2N + 1,
\]
for some $\epsilon >0$ small enough.
\end{prop}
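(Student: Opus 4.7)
The strategy is to establish (\ref{res1}) first on each radial channel $(Q_\nu - z)^{-1}$ using the explicit Schwartz kernel (\ref{Knu}), then reassemble via the spectral decomposition
\[
\widetilde R_0(z) \;=\; \sum_{\nu \in \sigma_\infty} (Q_\nu - z)^{-1} \pi_\nu .
\]
As the paper notes, the scheme is the one of \cite{Wang01, Wang02}; I would only spell out the modifications required to also obtain the new point (b). The key technical step is to split the $t$-integral in (\ref{Knu}) at $t = 1$: the piece $t \in (0,1]$ produces the regular part $\sum_j z^j F_{\nu,j}$, while $t \in [1,\infty[$ encodes all singular behavior.

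On $t \in (0,1]$, substituting Poisson's integral representation
\[
J_\nu(w) \;=\; \frac{(w/2)^\nu}{\sqrt{\pi}\,\Gamma(\nu + \tfrac12)} \int_{-1}^{1} e^{iw\theta}(1-\theta^2)^{\nu - 1/2}\, d\theta,\qquad \Re \nu > -\tfrac12,
\]
for $J_\nu(1/(2t))$ and rescaling $s = 1/(2t)$ rewrites the contribution in terms of the auxiliary function $f(s;r,\tau,\nu)$ introduced in the text. The convergent Taylor series $f = \sum_j s^j f_j$ combined with expansion of $e^{izr\tau/(2s)}$ then yields the regular formal series in $z$, and the identification (\ref{fj}) gives the polynomial structure $(r\tau)^{-(n-2)/2} P_{j,\nu}(\rho)$. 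On the complementary range $t \in [1,\infty[$, I would insert the small-argument expansion $J_\nu(1/(2t)) \sim \sum_k c_{\nu,k}\, t^{-\nu - 2k}$ (with an extra logarithmic factor for $\nu \in \mathbb N$), Taylor-expand $e^{i\rho/t}$, and rescale $\sigma = z r\tau t$. The resulting one-dimensional integrals, together with the reflection formula $\Gamma(1-\nu')\Gamma(\nu') = \pi/\sin(\pi\nu')$, produce precisely the singular terms $z_\nu z^j G_{\nu,j+\delta_\nu}$ with coefficients $b_{\nu',j}$ as in (\ref{srfl02}). Summing over $\nu \in \sigma_\infty$, the mapping properties in $\mathcal L(-1,s;1,-s)$, $s > 2N+1$, follow from Hardy's inequality (to control the $r^{-2}$ singularity at the origin) and Weyl's law $\lambda_\nu \sim C\nu^2$ on $\bS^{n-1}$.

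The remainder bound $\widetilde R_0^{(N)}(z) = O(|z|^{N+\epsilon})$ combines the uniform error $f - \sum_{j<N} s^j f_j = O(s^N)$ from the Taylor expansion on the $(0,1]$ side with the strict positive gap between consecutive singular exponents $\nu' + j$ in the tail of the expansion on the $[1,\infty[$ side; it is essentially the estimate of \cite{Wang01}, adapted here to the operator $\widetilde P_0$ directly (rather than its cut-off modification), which costs only a slightly smaller $\epsilon$.

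\textbf{Main obstacle.} Part (b) is the genuinely new point and its proof is the delicate step, because naively differentiating under the integral sign in (\ref{Knu}) brings down a factor $i r\tau t$ that destroys the decay as $t\to\infty$. I would circumvent this by using the expansion from (a) at a higher order $N' > N$, writing
\[
\widetilde R_0^{(N)}(z) \;=\; \sum_{j,\nu} \bigl(\text{explicit } z^j \text{ and } z^{\nu' + j} \text{ and } z^j \ln z \text{ terms of order between } N \text{ and } N'\bigr) + \widetilde R_0^{(N')}(z).
\]
Each explicit polynomial, fractional-power, or logarithmic summand is differentiated in $z$ by inspection and contributes $O(|z|^{N-1+\epsilon})$. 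For the leftover $\frac{d}{dz}\widetilde R_0^{(N')}(z)$ I would apply (a) at order $N'$ together with an integration by parts in $t$ in the regime $|z r\tau t| \gg 1$, which trades the extra factor $r\tau t$ for a $|z|^{-1}$ gain and delivers $O(|z|^{N'-1+\epsilon'}) \subset O(|z|^{N-1+\epsilon})$ once $N'$ is chosen large enough (so that the extra powers of $r\tau$ are absorbed by enlarging $s$). This reduces (b) to (a) and completes the plan.
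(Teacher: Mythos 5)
Your treatment of (a) follows the same scheme as the paper's source \cite{Wang02}: channel decomposition $\widetilde R_0(z)=\sum_{\nu}(Q_\nu-z)^{-1}\pi_\nu$, splitting of the $t$-integral in (\ref{Knu}) at $t=1$, Poisson's representation leading to $f(s;r,\tau,\nu)$, and uniform control in $\nu$ before summing the channels. Two slips there. First, the Taylor expansion $f=\sum_j s^jf_j$ lives in the variable $s\sim 1/t$, so it is the tool for the \emph{singular} piece $t\in[1,\infty[$ (which is why the coefficients $f_{j-[\nu]}$ appear in $G_{\nu,j}$), not for $t\in(0,1]$, where one simply expands $e^{izr\tau t}$ in powers of $z$; you have the two halves interchanged. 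Second, $J_\nu$ carries no logarithmic factor for $\nu\in\bN$ (it is $w^{-\nu}J_\nu(w)$ entire); the $z\ln z$ terms arise from the $t$-integration when exponents hit nonpositive integers, not from the Bessel function itself.

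The genuine gap is in (b), the only part where the paper goes beyond \cite{Wang02}. Your reduction --- expand to order $N'>N$, differentiate the explicit intermediate terms by inspection, and control $\frac{d}{dz}\widetilde R_0^{(N')}(z)$ by integration by parts --- loses the weight range. The decomposition $\widetilde R_0^{(N)}(z)=\sum_{N<j\le N'}z^jF_j+\cdots+\widetilde R_0^{(N')}(z)$ only makes sense in $\vL(-1,s;1,-s)$ for $s>2N'+1$: each $F_j$ is bounded only for $s>2j+1$, the coefficients $R_{\vec\nu,j}$-type terms need $s>2j+\{\vec\nu\}+1$, and the order-$N'$ remainder estimate needs $s>2N'+1$. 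So this route yields the derivative bound only for $s>2N'+1$, whereas the proposition asserts it for every $s>2N+1$; your parenthetical about ``enlarging $s$'' concedes exactly this. The weight range is not negotiable here: the applications (Proposition \ref{lrapp03}(b) with $N=1$, $s>3$, and the trace estimates of Section 4) are calibrated to $\rho_0>6$, i.e.\ to $s$ only slightly larger than $3$, and would require stronger decay under your weaker conclusion. The paper's route (detailed in \cite{jia}) differentiates the kernel of each $(Q_\nu-z)^{-1}$ directly and re-estimates the differentiated order-$N$ remainder with the \emph{same} weight: the factor $ir\tau t$ brought down by $\partial_z$ is traded for one power of $|z|$ by a single integration by parts against $e^{izr\tau t}$ applied to the order-$N$ remainder itself, with the $\nu$-dependence of the Bessel functions controlled uniformly before summing over $\sigma_\infty$. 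Your integration-by-parts idea is the right one, but it must be applied at order $N$, not after passing to order $N'$.
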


See \cite{Wang02} for the proof of (a). To show that it is possible to differentiate the asymptotic expansion in
$z$, one    first shows that it can be done for each  $(Q_\nu-z)^{-1}$, then one utilizes the properties of Bessel function to control the dependence on $\nu$, including the remainders, and finally takes the sum in $\nu$. See \cite{jia} for details.

\begin{remark}
For the later use, let us  precise  a few terms in $R_0(z)$. By an abuse of notation, we denote by same letter an operator on $L^2(\bR_+; r^{n-1} dr)$ and its distribution kernel. Then one has
\begin{eqnarray}
F_{\nu, 0} &=&  (r\tau)^{-\frac{1}{2}(n-2)}\int_0^\infty e^{i\frac{\rho}{t} -i\frac{\pi
\nu}{2}}J_{\nu}(\frac{1}{2t})~\frac{dt}{2t} , \quad \nu \in \sigma_\infty, \label{3.14} \\
G_{\nu,0} &= & -\frac{ e^{-i \pi \nu} \Gamma(1-\nu)}{\nu 2^{2\nu+1} \Gamma (1 + \nu)}
(r \tau)^{ - \frac{n-2}{2} + \nu}, \quad \nu \in ]0, 1[,  \label{Gnuzero} \\
G_{1,1} & = & -\frac{1}{8} (r \tau)^{ - \frac{n-2}{2} + 1}. \label{Gunun}
\end{eqnarray}
 By (\ref{3.14}), one can derive that
\begin{equation} \label{3.17}
|F_{\nu,0}(r, \tau)|\le C (r\tau)^{-\frac{n-2}{2}} (\frac{r\tau}{r^2 +\tau^2})^{ \min\{1, \nu\}},
\end{equation}
for some $C>0$ independent of $\nu \in\sigma_\infty$. The uniformity  in $\nu$ is obtained by  examining the dependence of $J_\nu(r)$ on $\nu$.
\end{remark}

The asymptotic expansion for $R_0(z) = (P_0-z)^{-1}$  can be deduced by
 $P_0$ as perturbations of $\widetilde P_0$. One has
\begin{equation} \label{reslventR0}
R_0(z) = ( 1- F(z))^{-1} \widetilde{R}_0(z), \quad  R(z) = ( 1- \widetilde F(z))^{-1} \widetilde{R}_0(z)
\end{equation}
where
\begin{equation}
F(z) =  \widetilde{R}_0(z)W, \quad  \widetilde F(z) =  \widetilde{R}_0(z) \widetilde W.
\end{equation}
For $n\ge 3$, the multiplication by $\frac{1}{|x|^2}$ belongs to $\vL(1, s; -1, s)$ for any $s$, by the Hardy inequality. By (\ref{Hardy}), the same is true for $n=2$ if we define $H^{1,s}$ as $\w{x}^{-s} Q(\widetilde P_0)$, where $Q(\widetilde P_0)$ is the form-domain of $\widetilde P_0$. Therefore although $W$ and $\widetilde W$ have a critical singularity $\frac{1}{|x|^2} $ at zero, Proposition \ref{lrapp01} implies that
 $F(z) = F_0W + O(|z|^{\epsilon})$ in $\vL(1,-s; 1,-s)$ for $s >1$ (here and in the following, $H^{1,s}$ is replaced by $\w{x}^{-s} Q(\widetilde P_0)$  when $n=2$). Similar result holds for $\widetilde{F}(z)$.
Note that $F_0W$ and $F_0 \widetilde W$ are not compact operators.

\begin{definition}  Set $\mathcal N (P) = \{~ u ;  F_0 \widetilde W u = u,~ u
\in H^{1,-s},~ \forall s
>1\}.$ A function $u \in \mathcal N (P) \backslash L^2$ is called a
resonant state of $P$ at zero. If $\mathcal N (P) =\{0\}$, we say that $0$ is the regular point of $P$. The
multiplicity of the zero resonance of $P$ is defined as $\mu_r = \dim \vN/(\ker_{L^2} P)$.  Zero resonance and resonant states of $P_0$ are defined in the same way with $\widetilde W$ replaced by $W$.
\end{definition}

For   $ u \in H^{1,-s}$ for any $s>1$ and $u \in \vN$, one can show that  $Pu= (\widetilde P_0 -\widetilde W)u=0$ If
$\widetilde W = O(\w{x}^{-\rho_0})$ with $\rho_0>3$, it is proved in  \cite{Wang02} that
\begin{equation} \label{rstates}
 u(r\theta) =\sum_{\nu \in \sigma_1}  \sum_{j = 1}^{n_\nu}
 \frac{1}{2\nu} \w{ \widetilde W u, |y|^{-\frac{n-2}{2} + \nu}\varphi_\nu^{(j)}}
 \frac{ \varphi_\nu^{(j)} (\theta)} { r^{\frac{n-2}{2} +\nu} } +
\widetilde u,
\end{equation}
where $ \widetilde u\in L^2(|x|>1)$ , and $(\cdot, \cdot)$ is the scalar
product in $L^2(\mathbb S^{n-1})$.  In particular,  (\ref{rstates}) shows that the multiplicity of the zero resonance of $P$ is bounded by the total multiplicity of eigenvalues $ \lambda_\nu$ of $ -\Delta_{\bS^{n-1}} + q(\theta)$ with $\nu \in \sigma_1$. \\

For $\nu \in \sigma_1$, we shall say $u $ is a {\it $\nu$-resonant state}, or  {\it $\nu$-bound state}, of $P$
if $u \in \vN(P)$ and if $u$ has an asymptotic behavior like
\[
 u(x) = \frac{\psi(\theta)}{r^{\frac{n-2}{2}+ \nu} } + o( \frac{1}{r^{\frac{n-2}{2}+ \nu}}),
\]
for some $\psi \neq 0$, as $r \to \infty$. In the case $n=3$ and $q(\theta)=0$, one has $\sigma_1=\{\frac 1
2\}$.  The only possible zero energy resonant states of $P$ are
half-bound states, which is in agreement with the usual terminology on this topic. In the general case, (\ref{rstates}) shows that $\psi$ is an eigenfunction  of $-\Delta_{\bS^{n-1}} +
q(\theta)$ associated with the eigenvalue $\lambda_\nu$. We shall say that $m$ $\nu$-resonant states of $P$,
denoted as $u_1, \cdots, u_m$, are linearly independent if
\[
 u_l(x) = \frac{\psi_l(\theta)}{r^{\frac{n-2}{2}+ \nu} }(1+  o(1)), \quad r\to\infty,
\]
with $\{\psi_1, \cdots, \psi_m\}$ linearly independent in $L^2(\bS^{n-1})$. Let $m_\nu$ denote the maximal
number of  linearly independent $\nu$-resonant states of $P$. Then, $m_\nu$ does not exceed the multiplicity of
the eigenvalue $\lambda_\nu$ of $-\Delta_{\bS^{n-1}} + q(\theta)$ and
\begin{equation}
\sum_{\nu\in \sigma_1} m_\nu = \mu_r.
\end{equation}
Note in particular that if $u$ is a $\nu$-resonant state, then one has
\begin{equation} \label{caract}
\w{ \widetilde W u, |y|^{-\frac{n-2}{2} + \mu}\varphi_\mu^{(j)}} =0
\end{equation}
for all $\mu \in \sigma_1$ with $\mu < \nu$ and for all $j$ with $ 1 \le j \le n_\mu$ and if $u$ is an eigenfunction of $P$ associated with the eigenvalue $0$, the above equality remains true for all $\nu \in \sigma_1$ and all  $j$ with $ 1 \le j \le n_\mu$ (see (\ref{rstates}) ).  These properties will be repeatedly
used in the calculation of the generalized residue in Section 4. Finally,  remark that if  $m_\nu \neq 0$ for some $\nu \in \sigma_1$,
we can choose  $m_\nu$ $\nu$-resonant states $u_l =  \frac{\psi_l(\theta)}{r^{\frac{n-2}{2}+ \nu} }(1+  o(1))$,
$1 \le l \le m_\nu$, such that $\{\psi_l\}$ is orthonormal in $L^2(\bS^{n-1})$.
Modifying the basis $\{\varphi_\nu^{(j)}\}$ used in the definition of the spectral projection $\pi_\nu$, we can assume without loss that $ \varphi_\nu^{(l)} = \psi_l$, $1 \le l \le m_\nu$. \\

The model operator $P_0$ to be used in the next Section has the following nice threshold spectral property.

\begin{lemma}\label{lem3.1}
Assume $n \ge 2$ and (\ref{sigma}).  Zero is a regular point of $ P_0$.
\end{lemma}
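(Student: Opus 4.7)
My plan is to exploit the factored expression $P_0 = \chi_1(-\Delta)\chi_1 + \chi_2\widetilde P_0 \chi_2$, in which each summand is formally nonnegative, combined with the strengthened Hardy-type inequality
\[
\int_{\bR^n}\!\bigl(|\nabla f|^2 + \tfrac{q(\theta)}{r^2}|f|^2\bigr)dx \;\ge\; \nu_{\min}^2\int_{\bR^n}\frac{|f|^2}{|x|^2}dx,\qquad \nu_{\min}:=\min\sigma_\infty>0,
\]
which follows from (\ref{positive}) via spherical-harmonic decomposition and the standard radial Hardy estimate $\int|g'|^2 r^{n-1}dr \ge ((n-2)/2)^2\int|g|^2 r^{n-3}dr$.

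Let $u\in\mathcal N(P_0)$, i.e.\ $u=F_0 W u$ with $u\in H^{1,-s}$ for every $s>1$. Since $\widetilde P_0 F_0=I$ on the relevant weighted spaces, $u$ satisfies $P_0 u=0$ distributionally. The key observation is that $W=\chi_1^2 q(\theta)/r^2-\sum_j|\nabla\chi_j|^2$ is compactly supported in $B(0,R_1)$, so $\widetilde P_0 u=0$ for $|x|>R_1$. Separating variables in the eigenmodes $\varphi_\nu^{(j)}$ of $-\Delta_{\bS^{n-1}}+q(\theta)$ yields, for $r>R_1$,
\[
u(r\theta)=\sum_{\nu\in\sigma_\infty}\sum_{j=1}^{n_\nu}\bigl(a_{\nu,j}\,r^{-\frac{n-2}{2}+\nu}+b_{\nu,j}\,r^{-\frac{n-2}{2}-\nu}\bigr)\varphi_\nu^{(j)}(\theta).
\]
The requirement $u\in L^{2,-s}$ for every $s>1$ forces $a_{\nu,j}=0$ for each $\nu$, because the growing mode $r^{-(n-2)/2+\nu}$ lies in $L^{2,-s}$ only when $s>1+\nu$. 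A direct computation using the surviving decaying expansion then shows that $\nabla u$ and $u/|x|$ both belong to $L^2(\{|x|>R_1\})$.

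Next I pair $P_0 u=0$ with $\bar u$ on $B(0,R)$ for $R>R_1$ and integrate by parts, exploiting the compact support of $\chi_1 u$ and the fact that $\chi_2\equiv 1$ on $\partial B(0,R)$:
\[
0=\|\nabla(\chi_1 u)\|_{L^2(\bR^n)}^2+\int_{B(0,R)}\!\bigl(|\nabla(\chi_2 u)|^2+\tfrac{q(\theta)}{r^2}|\chi_2 u|^2\bigr)dx-\int_{\partial B(0,R)}(\partial_r u)\bar u\,dS.
\]
Substituting the mode expansion and using orthonormality of $\{\varphi_\nu^{(j)}\}$, the boundary integral reduces to $-\sum_{\nu,j}(\tfrac{n-2}{2}+\nu)|b_{\nu,j}|^2 R^{-2\nu}$, which tends to $0$ as $R\to\infty$ because every $\nu\in\sigma_\infty$ is strictly positive. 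In the limit, both summands are nonnegative; the Hardy bound controls the second from below by $\nu_{\min}^2\int|\chi_2 u|^2/|x|^2\,dx$, forcing each summand to vanish. Thus $\chi_2 u\equiv 0$, while $\chi_1 u$ must be a constant of compact support, hence also identically zero. The partition $\chi_1^2+\chi_2^2=1$ then yields $u\equiv 0$.

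The principal technical obstacle I anticipate is the uniform control, in $\nu\in\sigma_\infty$, of the series expansions involved—in particular justifying the $L^2$ bounds on $\nabla u$, $u/|x|$ over $\{|x|>R_1\}$ and the vanishing of the boundary term as $R\to\infty$. These require the uniform pointwise bound (\ref{3.17}) on the kernels $F_{\nu,0}$ together with the compact support of $Wu$, which together force the coefficients $b_{\nu,j}$ to decay sufficiently rapidly in $\nu$ for dominated convergence to apply.
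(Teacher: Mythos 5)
Your proof is correct, and its backbone is the same as the paper's: write $\w{P_0u,u}=\w{-\Delta\chi_1u,\chi_1u}+\w{\widetilde P_0\chi_2u,\chi_2u}=0$, observe that the second term dominates $c\int|x|^{-2}|\chi_2u|^2$ by the Hardy-type inequality that (\ref{sigma}) provides (the paper's (\ref{Hardy})), and conclude that both terms vanish. The differences are in the execution. For the decay of $u$ needed to justify the quadratic-form identity, the paper quotes the asymptotic formula (\ref{rstates}) to place $u$ in $H^{1,-s'}$ for $s'>1-\nu_0$ and then runs a cutoff/commutator argument with $\rho(x/m)$; you instead separate variables outside $\mathrm{supp}\,W$, kill the growing modes using $u\in L^{2,-s}$ for all $s>1$, and compute the boundary term on $\partial B(0,R)$ explicitly. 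Your route is more self-contained and yields stronger information (the full decaying mode expansion), at the price of the uniform-in-$\nu$ summability issues you flag at the end — these do work out by comparing the coefficient sums at two radii, but they are a genuine extra burden that the paper's commutator argument sidesteps. Finally, your last step is a small improvement: from $\nabla(\chi_1u)=0$ and compact support you get $\chi_1u=0$ directly and then $u=\chi_1(\chi_1u)+\chi_2(\chi_2u)=0$, avoiding the appeal to unique continuation that the paper makes (and which, as your argument shows, is not actually needed there either).
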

\begin{proof}
Recall that $P_0 = \widetilde P_0 - W$ with $W$ of compact support. Let $u\in \vN (P_0)$. Let $\nu_0
=\min \sigma_1 >0$, by (\ref{sigma}). Since $W$ is of compact support, $Wu \in H^{-1, t}$ for any $t>0$. It
follows from (\ref{rstates}) that $u \in H^{1, -s'}$ for any $s'>1-\nu_0$. On the other hand, $P_0u =0$ implies
that $-\Delta u = (W -\frac{q(\theta)}{|x|^2})u  \in L^{2, 2-s'}$. By the assumption (\ref{sigma}), one can
deduce that   there exists $c>0$ such that
\begin{equation}\label{bdd}
 \w{ |x|^{-2} \chi_2 u, \chi_2 u} \le c \w{ \widetilde P_0 \chi_2 u, \chi_2 u}.
\end{equation}
In fact, let $\rho \in C_0^\infty(\bR^n)$ with $\rho(x) =1$ for $|x| \le 1$. Set $u_m = \rho(x/m)\chi_2 u$, $m
\in \bN^*$.  Then $u_m \in H^1$, and by the ellipticity of $-\Delta$ one has in fact $u_m \in H^2$.
The assumption (\ref{sigma}) implies that there exists $\epsilon_0>0$ such that for $f \in H^1$ with compact
support in $\bR^n \setminus \{0\}$, one has
\[
\w{\widetilde P f, f} \ge \int \int (|\frac{\partial  f }{\partial r}|^2 + (\epsilon_0 - \frac{(n-2)^2}{4})
\frac{|f|^2}{r^2}) r^{n-1} dr d \theta.
\]
Making use of the Hardy inequality, one obtains for $n\ge 2$
\begin{equation} \label{Hardy}
 \w{ |x|^{-2} f,  f} \le \epsilon_0^{-1} \w{ \widetilde P_0  f,  f}.
\end{equation}
Since $u\in H^{1, -s}$ and $-\Delta u \in L^{2, 2-s}$  for any $s>1-\nu_0$,  $\nu_0>0$, we can take  $s \in
]1-\nu_0, 1[$. Applying (\ref{Hardy}) to $f= u_m$ and  taking the limit $m\to \infty$,  we  derive (\ref{bdd}) by noticing that the term related to $[-\Delta, \rho(x/m)]$ tends to $0$, due to the decay  of $u$.
(\ref{bdd}) implies in particular that $\w{\widetilde P_0 \chi_2 u, \chi_2 u} \ge 0$. The equation
\[
 \w{-\Delta \chi_1 u,  \chi_1 u} + \w{\widetilde P_0 \chi_2 u, \chi_2 u}  = \w{ P_0 u, u} = 0
\]
shows that  each term of the above sum vanishes. The estimate (\ref{bdd}) gives in turn  $\chi_2 u=0$. Now the
unique continuation theorem  shows that $u=0$.  This proves that zero is a regular point of $P_0$.
\end{proof}

The existence of the asymptotic expansion of the resolvent $R_0(z)$ can easily be obtained  by a method of perturbation.
Concretely, let $K(z)$ be defined by
\begin{equation}
K(z) =  (\chi_1 (-\Delta + 1 -z)^{-1}\chi_1 + \chi_2 (\widetilde P_0 -z)^{-1} \chi_2) (P_0-z) -1, \quad z\not\in\bR_+.
\end{equation}
$K(z)$ is  compact operator on $H^{1,-s}$, $s>1$. By  Proposition \ref{lrapp01}, the limit $K(0) =\lim_{z\to 0} K(z)$ exists  and is compact. The kernel of $1+ K(0)$ in $H^{1,-s}$ coincides with $\vN(P_0)$ (both are equal to  solutions to the equation $P_0u =0$, $u\in H^{1,-s}$)) which by Lemma \ref{lem3.1} is $\{0\}$. So $(1+ K(0))^{-1}$ is invertible in $H^{1,-s}$. From Proposition \ref{lrapp01},
we can derive the asymptotic expansion of $R_0(z)$ in suitable weighted spaces  from the formula
\begin{equation}
R_0(z) = (1 + K(z))^{-1} (\chi_1 (-\Delta + 1 -z)^{-1}\chi_1 + \chi_2 (\widetilde P_0 -z)^{-1} \chi_2).
\end{equation}
In particular,  $R_0(0) =\lim_{z\to 0, z\not\in \bR_+} R_0(z)$ exists in $\vL(-1, s; 1,-s)$, $s>1$ and
\begin{equation} \label{R00}
R_0(0) = (1 + K(0))^{-1} (\chi_1 (-\Delta + 1 )^{-1}\chi_1 + \chi_2 F_0 \chi_2).
\end{equation}
 However since $K(z)$ contains several terms induced by cut-offs, the expansion obtained in this way is too complicated to be useful in the proof of Levinson's theorem which requires detailed information on higher order terms. For this purpose, we use the resolvent equation $R_0(z) = (1 - \widetilde R_0(z) W)^{-1} \widetilde R _0(z)$ to obtain a more concise expansion.

\begin{prop} \label{prop3.4}  Assume (\ref{sigma}), $n\ge 2$ and $\rho_0>2$.

(a).  $1-F_0 W$ is  invertible  on  $H^{1,-s}$, $s>1$.

(b).  Let $s\in ]1, \rho_0/2[$.  $1-F_0 \widetilde W$ is a Fredholm operator on $H^{1,-s}$  with indices $(m,m)$, $m =\dim \vN(P) <\infty$. One has
$H^{1,-s} = \ker ( 1-F_0 \widetilde W)\oplus \ran (1-F_0 \widetilde W)$.
\end{prop}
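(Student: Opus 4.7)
The plan is to treat both parts of Proposition \ref{prop3.4} within the framework of the resolvent identity
\[
(1 - F(z))^{-1}\widetilde R_0(z) = R_0(z),\qquad z\notin\bR,
\]
combined with the threshold analysis of $P_0$ (Lemma \ref{lem3.1}) and of $P$ (formula (\ref{rstates})). As noted just before the statement, $F_0 W$ and $F_0\widetilde W$ fail to be compact on $H^{1,-s}$ (the obstruction coming from the critical singularity of $W$ at the origin), so the standard Fredholm alternative is unavailable and I would use the specific structure of $P_0$ and $P$ together with the already-constructed bounded operator $R_0(0)\in\vL(-1,s;1,-s)$ from (\ref{R00}).

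For part (a), injectivity of $1 - F_0 W$ is immediate from Lemma \ref{lem3.1}: if $u\in H^{1,-s}$ and $u = F_0 W u$, then $u\in\vN(P_0)=\{0\}$. For surjectivity I would use the identity $\widetilde P_0(1 - F_0 W) = P_0$, valid on appropriate domains thanks to the cancellation of the critical singularity $q(\theta)/r^2$ between $\widetilde P_0$ and $W$. Combined with the formula $R_0(0) = (1 - F_0 W)^{-1} F_0$ obtained by passing to the limit $z\to 0$ in the resolvent identity, one constructs a two-sided inverse of $1 - F_0 W$ on $H^{1,-s}$. The delicate point is that $\widetilde P_0$ does not map $H^{1,-s}$ directly into the natural source space $H^{-1,s}$ of $R_0(0)$; the cancellation in $P_0 = \widetilde P_0 - W$ (which removes the critical singularity at the origin) is what makes $P_0 u$ land in the correct weighted space and provides the missing compatibility.

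For part (b), I would first reduce to part (a) via $\widetilde W = W - V$, where $V$ is bounded and $|V(x)|\le C\langle x\rangle^{-\rho_0}$. The factorisation
\[
1 - F_0\widetilde W \;=\; (1 - F_0 W)\bigl(1 - (1-F_0 W)^{-1} F_0 V\bigr)
\]
reduces the Fredholm property to showing that $F_0 V$ is compact on $H^{1,-s}$ for $s\in\,]1,\rho_0/2[$. A splitting $V = V\chi_R + V(1-\chi_R)$ does the job: the far part $V(1-\chi_R)$ has arbitrarily small $\vL(H^{1,-s})$-norm by the decay of $V$, while the near part is compactly supported and bounded, so $F_0(V\chi_R)$ gains $H^2_{loc}$-regularity (by the ellipticity of $\widetilde P_0$ away from $0$) and is thus compact by Rellich--Kondrachov combined with a tail estimate exploiting $\rho_0 - s > s$. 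Hence $1 - F_0\widetilde W$ is Fredholm of index zero, with kernel $\vN(P)$ of finite dimension $m$: the non-$L^2$ part of any element of $\vN(P)$ is parametrised by at most $\sum_{\nu\in\sigma_1} n_\nu$ angular coefficients via (\ref{rstates}), and the $L^2$ zero eigenspace is finite-dimensional by standard spectral theory.

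The hard part will be the direct sum decomposition $H^{1,-s} = \ker A \oplus \ran A$ for $A = 1 - F_0\widetilde W$; equivalently, the absence of a Jordan chain of length two at zero, $\ker A \cap \ran A = \{0\}$. Suppose $u\in\ker A$ and $u = Aw$ for some $w\in H^{1,-s}$. Applying $\widetilde P_0$ to $u = F_0\widetilde W u$ gives $\widetilde P_0 u = \widetilde W u$; applying $\widetilde P_0$ to $u = w - F_0\widetilde W w$ gives $\widetilde P_0 u = \widetilde P_0 w - \widetilde W w = Pw$. Hence $Pw = \widetilde W u$. Green's formula (boundary terms at infinity being controlled by the decay in (\ref{rstates}) and elliptic regularity for $P$) together with $Pu = 0$ yields
\[
\langle\widetilde W u, u\rangle \;=\; \langle Pw, u\rangle \;=\; \langle w, Pu\rangle \;=\; 0,
\]
so $\langle\widetilde P_0 u, u\rangle = \langle\widetilde W u, u\rangle = 0$. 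The final step, deducing $u = 0$, is the most delicate since $u$ may be a resonant state and therefore need not lie in the $L^2$ form domain of $\widetilde P_0$. I would regularise by cut-offs $\rho(x/m)$ as in the proof of Lemma \ref{lem3.1}; the condition $s<\rho_0/2$ together with the explicit asymptotic of resonant states given by (\ref{rstates}) (which makes $|u|^2/|x|^2$ and $|\nabla u|^2$ integrable at infinity) ensures that the commutator terms $[-\Delta,\rho(x/m)]u$ vanish as $m\to\infty$, reducing the positivity statement to its form-domain version, and the Hardy-type inequality $\langle\widetilde P_0 u, u\rangle\ge \epsilon_0\int|x|^{-2}|u|^2\,dx$ from Lemma \ref{lem3.1} then forces $u = 0$.
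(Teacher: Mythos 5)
Your proposal is correct in substance and shares the paper's skeleton --- injectivity of $1-F_0W$ from Lemma \ref{lem3.1}, surjectivity via the limiting operator $R_0(0)$ of (\ref{R00}), and reduction of part (b) to the compact perturbation $1+R_0(0)V$ --- but it organizes (b) differently and fills in a step the paper outsources. For (a) the paper's argument is the concrete version of yours: from the resolvent identity one gets $(1-F_0W)R_0(0)=F_0$ in the limit $z\to 0$, so that $f=u+R_0(0)Wu$ is an explicit preimage of $u$; your phrasing ``$R_0(0)=(1-F_0W)^{-1}F_0$'' is mildly circular as written (it presupposes the invertibility you are proving), but the intended argument is the same. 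For (b), note the sign: since $\widetilde W=W-V$, the factorisation is $1-F_0\widetilde W=(1-F_0W)\bigl(1+(1-F_0W)^{-1}F_0V\bigr)=(1-F_0W)(1+R_0(0)V)$, with a plus. With that correction your route is actually cleaner than the paper's: the paper instead proves closedness of the range by hand through the identity $(1+R_0(0)W)u_n=(1+R_0(0)V)v_n$ and cites \cite{Jensen02,Wang03} both for the Fredholm property of $1+R_0(0)V$ and for the direct-sum decomposition, whereas you prove compactness of $F_0V$ directly and, more importantly, supply the missing argument for $H^{1,-s}=\ker\oplus\ran$. You correctly identify that the decomposition for $1+R_0(0)V$ does not transfer automatically (the kernel is preserved but the range is twisted by $1-F_0W$), and your proof that $\ker(1-F_0\widetilde W)\cap\ran(1-F_0\widetilde W)=\{0\}$ --- deriving $\langle\widetilde Wu,u\rangle=\langle Pw,u\rangle=\langle w,Pu\rangle=0$ and then invoking $\langle\widetilde P_0u,u\rangle\ge\epsilon_0\langle|x|^{-2}u,u\rangle$ with the cut-off regularisation of Lemma \ref{lem3.1} --- is exactly the kind of argument the paper defers to \cite{Wang03}; the integrability of $|u|^2/|x|^2$ at infinity for resonant states ($\nu>0$ in (\ref{rstates})) is indeed what makes the coercivity applicable outside $L^2$. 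The remaining soft spots (justifying the Green's formula for $w,u\in H^{1,-s}$ only, and upgrading local $H^2$-regularity of $F_0(V\chi_R)$ to compactness into $H^{1,-s}$) are flagged by you and are no less detailed than what the paper itself provides.
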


\begin{proof}
(a). Lemma \ref{lem3.1} shows that $1-F_0 W$ is injective. Since $(1-F_0W)^* = 1-WF_0$, $F_0$ is injective from coker $ (1-F_0W)$ to $\ker (1-F_0W)$. Therefore coker $ (1-F_0W) =\{0\}$ and ran$(1-WF_0)$ is  dense in $H^{1,-s}$.
For any $u \in H^{1,-s}$, set
\[
f = u + R_0(0) (Wu)
\]
with $R_0(0)$ given by (\ref{R00}).
Then $f \in H^{1,-s}$. Since $F_0$ and $R_0(0)$ are limits of $\tilde R_0(z)$ and $R_0(z)$ in $\vL(-1,s; 1,-s)$, we can check that
\begin{equation}
(1-F_0W)f = u +  ( R_0(0) - F_0  -F_0 W R_0(0))Wu = u.
\end{equation}
This proves that $u \in $  ran$(1-F_0W)$ and  $1-F_0W$ is bijective on $H^{1,-s}$. The open mapping theorem shows that  $1-F_0W$ is invertible on $H^{1,-s}$.

To show (b), recall that $1+ R_0(0)V$ is a Fredholm operator with equal indices $(m, m)$, $m =\dim \ker (  1+ R_0(0)V)$, and that $H^{1,-s} = \ker (1+ R_0(0)V) \oplus \mbox{ \rm ran } (1+ R_0(0)V)$ (see \cite{Jensen02, Wang03}).   By the relation $P= P_0 + V = \tilde P_0 - \tilde W$, one can show that $ \vN(P) =\ker P =  \ker (1+ R_0(0)V)$  in $H^{1,-s}$, $s\in ]1, \rho_0/2[$ and that coker $(1- F_0 \widetilde W)$ is also of dimension $m$.
To show that the range of  $1- F_0 \widetilde W $ is closed, let $u\in H^{1,-s}$ and $\{u_n\} \subset \ran  (1- F_0 \widetilde W)$ such that
$u_n \to u$ in $H^{1,-s}$.  Let $v_n \in H^{1,-s}$ such that $u_n = (1- F_0 \widetilde W) v_n$.
Then $\widetilde P_0 u_n = Pv_n = (P_0 + V) v_n$. It follows that
\begin{equation} \label{add1}
(1+ R_0(0) W)u_n = (1+ R_0(0)V)v_n.
\end{equation}
Since $1+ R_0(0) W$ is continuous on $H^{1,-s}$,  the left-hand side of (\ref{add1}) converges to $(1+ R_0(0) W)u$ as $n \to \infty$, while the right-hand side is clearly in the range of $1+ R_0(0)V$. Since $R_0(0)V$ is compact, the range of $1+ R_0(0)V$ is closed. It follows from (\ref{add1}) that there exists $v\in H^{1,-s}$ such that $ (1+ R_0(0) W)u = (1+ R_0(0)V)v$. One can check that $u = (1- F_0 \widetilde W) v \in \ran (1- F_0 \widetilde W)$, which proves that the range of $(1- F_0 \widetilde W)$ is closed.  It follows  that
 $(1- F_0 \widetilde W)$ is a Fredholm operator with equal indices. The other affirmation of (b) can be proved in the same way as in \cite{Wang03}.
\end{proof}

Denote
$$\vec{\nu} = (\nu_1, \cdots, \nu_k)\in( \sigma_N)^k, \quad
z_{\vec{\nu}} = z_{\nu_1} \cdots z_{\nu_k},$$
$$\{\vec{\nu}\}  = \sum_{j=1}^k \nu'_j, \quad [\vec{\nu}]_-
=\sum_{j=1}^k [\nu_j]_-, \quad [\vec{\nu}] =\sum_{j=1}^k [\nu_j].$$ Here   $\nu'_j = \nu_j - [\nu_j]_-$ for $\nu_j>0$. From Propositions \ref{lrapp01} and \ref{prop3.4} (a) and the resolvent equation $R_0(z) = ( 1- \widetilde{R}_0(z) W)^{-1} \widetilde{R}_0(z)$, we obtain the following

\begin{prop}\label{lrapp03}
 The following asymptotic expansion holds for $z$ near $0$ with $\Im z
>0$.

(a). Let $N\in \bN$ and $s>2N+1$. Then there exists $N_0\in \bN$ depending on $N$ and $\min \sigma_\infty$ such that
\begin{eqnarray}\label{lrafl05}
R_0(z)  = \sum_{j=0}^N z^j R_j +\sum_{ \{\vec{\nu}\} + j\le N}^{(1)} z_{\nu} z^{j}
R_{\vec{\nu},j} + R_0^{(N)}(z),
\end{eqnarray}
 in $ \mathcal L(-1,s; 1, -s)$. Here the notation $\sum_{ \{\vec{\nu}\} + j\le N}^{(l)}
$ means the finite sum taken  over all $\{\vec{\nu}\}\in\sigma_N^k, k\ge l,$ and $j \ge [ \vec{\nu}]_-$ such that $ \{\vec{\nu}\} + j\le N$,
\begin{eqnarray}
R_0 &= &A F_0;  \quad \quad R_1= A F_1 A ^*;\\
 R_{\vec{\nu},0} &= & A G_{\nu_1,\delta_{\nu_1}} \pi_{\nu_1} W A G_{\nu_2,\delta_{\nu_2}} \pi_{\nu_2} W\cdots A
G_{\nu_k,\delta_{\nu_k}} \pi_{\nu_k} A^* \label{Rnuzero}
\end{eqnarray}
for $\vec {\nu} = (\nu_1,\nu_2,\cdots,\nu_k)$ with $A = (1- F_0 W )^{-1}$.  In particular, if $k=1$ and
$\vec{\nu} = \nu_1$, one has
\begin{equation} \label{Rnuzero1}
R_{\vec{\nu},0} = A G_{\nu_1,\delta_{\nu_1}} \pi_{\nu_1}A^*.
\end{equation}
$R_j $ (resp. $R_{\vec{\nu},j}$) are in $\mathcal {L}(-1,s;1,-s)$ for $s>2j+1$ (resp. for
$s>2j+\{\vec{\nu}\}+1$), and $R_0^{(N)}(z) =O(|z|^{N+\epsilon})$ in $\mathcal {L}(-1,s;1,-s)$, $s>2N+1$.

(b). The above expansion (\ref{lrafl05}) can be differentiated in $z$ and  one has the estimate
\begin{equation}\label{lrafl07}
\frac{d}{dz}R_0^{(N)}(z)  = O(|z|^{N-1+\epsilon}),
\end{equation}
 in $ \mathcal L(-1,s; 1, -s), \;  s > 2N + 1$, with some $\epsilon >0$.
\end{prop}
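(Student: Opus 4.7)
\noindent
\textbf{Proof proposal for Proposition \ref{lrapp03}.}

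The plan is to derive the expansion from the second resolvent identity
\[
R_0(z) = (1 - \widetilde R_0(z) W)^{-1}\, \widetilde R_0(z),
\]
by substituting the expansion of $\widetilde R_0(z)$ from Proposition \ref{lrapp01}(a) and inverting the resulting series by a Neumann argument anchored at the invertible leading operator $1 - F_0 W$. The invertibility of $1-F_0W$ on $H^{1,-s}$ with $s>1$ is exactly Proposition \ref{prop3.4}(a), so the whole scheme makes sense in the correct weighted Sobolev spaces; one checks along the way that $W:H^{1,-s}\to H^{-1,s'}$ for any $s'$ (by the compact support of $\chi_j$ and the Hardy inequality used to handle the $1/|x|^2$ part), so that each term $\widetilde R_0(z)W$ acts continuously on $H^{1,-s}$.

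First I would write $\widetilde R_0(z) = F_0 + E(z)$ where, by Proposition \ref{lrapp01}(a),
\[
E(z) = \sum_{j=1}^{N} z^{j} F_j + \sum_{\nu\in\sigma_N}\sum_{j=[\nu]_-}^{N-1} z_\nu z^{j} G_{\nu,j+\delta_\nu}\pi_\nu + \widetilde R_0^{(N)}(z)
\]
is $O(|z|^{\epsilon})$ in the appropriate weighted norm. Setting $A=(1-F_0W)^{-1}$, the factorisation
\[
1 - \widetilde R_0(z) W = (1-F_0 W)\bigl(1 - A\,E(z)\,W\bigr)
\]
reduces inversion to expanding $(1 - AE(z)W)^{-1}$ as a geometric series in the small perturbation $AE(z)W$. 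Substituting the expansion of $E(z)$ and multiplying by $A\widetilde R_0(z)$ on the right, the contribution at order $m$ in the Neumann series is a finite sum of monomials $z^j z_{\vec{\nu}}$ whose operator coefficients are products of the form
\[
A\,H_{a_1}\,W\,A\,H_{a_2}\,W\,\cdots\,A\,H_{a_{m+1}}
\]
with each $H_{a_i}$ equal to some $F_j$ or $G_{\nu,\ell}\pi_\nu$. Collecting all monomials of total order $\{\vec\nu\}+j\le N$ yields the stated sum, with a remainder bounded by $O(|z|^{N+\epsilon})$ coming from $\widetilde R_0^{(N)}(z)$ together with the continuity of $A$.

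The main obstacle is the bookkeeping needed to recognise, after collecting terms of a given order, the concise closed forms announced in (\ref{Rnuzero}) and (\ref{Rnuzero1}). The key identity used repeatedly is $WA = A^{*}W$, which follows from $(1-WF_0)WA = W(1-F_0W)A = W$. For instance, in the computation of $R_{\vec\nu,0}$ with $k=1$, there are two contributions --- one from the $m=0$ term giving $A G_{\nu,\delta_\nu}\pi_\nu$ (the $G$-term coming from the rightmost factor $\widetilde R_0(z)$), and one from the $m=1$ term giving $AG_{\nu,\delta_\nu}\pi_\nu\,WA F_0$. Using $WAF_0 = A^*WF_0$ together with $A^* = 1 + A^*WF_0$, the sum collapses to $AG_{\nu,\delta_\nu}\pi_\nu A^*$. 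An analogous telescoping handles the general $k$ case and produces exactly the product in (\ref{Rnuzero}); the identical identities yield $R_0=AF_0=F_0A^*$ and $R_1 = AF_1 A^*$ after grouping all contributions with the same power of $z$.

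For part (b), by Proposition \ref{lrapp01}(b) the expansion of $\widetilde R_0(z)$ differentiates termwise in $z$ with the stated remainder estimate. Since $A$ is independent of $z$, the Neumann series produced above is a finite sum of products of factors depending on $z$ only through $\widetilde R_0(z)$ (or $E(z)$), so differentiation distributes over each product by the Leibniz rule; the algebraic identities used to pass from a monomial expansion to the closed forms are purely combinatorial and commute with $d/dz$. The remainder $\tfrac{d}{dz}R_0^{(N)}(z) = O(|z|^{N-1+\epsilon})$ in $\vL(-1,s;1,-s)$, $s>2N+1$, then follows from the analogous estimate on $\frac{d}{dz}\widetilde R_0^{(N)}(z)$ and the continuity of $A$ and $W$ in these weighted spaces.
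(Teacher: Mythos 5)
Your proposal is correct and follows exactly the route the paper itself indicates: the paper gives no written proof of this proposition, merely stating that it follows from Propositions \ref{lrapp01} and \ref{prop3.4}(a) and the resolvent equation $R_0(z) = (1-\widetilde R_0(z)W)^{-1}\widetilde R_0(z)$, and your factorisation $1-\widetilde R_0(z)W = (1-F_0W)(1-AE(z)W)$ with the Neumann expansion is the natural way to carry that out. The telescoping identities $WA = A^{*}W$ and $A^{*} = 1 + WAF_0$ that you use to collapse the two contributions at each order into the closed forms (\ref{Rnuzero}) and $R_1 = AF_1A^{*}$ are exactly right, and they supply the algebraic details the paper omits.
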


For the operator $P$,   zero may be an eigenvalue or a resonance of $P$. The  multiplicity of zero resonance may be large, but does not exceed the sum of  multiplicities  of  the eigenvalues $\lambda$  of $-\Delta_{\bS^{n-1}} + q(\theta)$ such that $\nu = \sqrt{\lambda + \frac{(n-2)^2}{4}} \in ]0,1]$. The existence of an asymptotic expansion of $R(z)$ can be obtained as in \cite{Wang01} by regarding $P$ as perturbation of $P_0$.  In order to obtain a more ``concise" expansion, we regard $P$ as perturbation of $\widetilde P_0$. Let
\begin{equation}
0< \varsigma_1 < \cdots < \varsigma_{\kappa_0} \le 1
\end{equation}
be the points in $\sigma_1$ such
 that $P$ has  $m_{\varsigma_j}$ linearly independent  $ \varsigma_j$-resonant states
 with $\sum_{j=1}^{\kappa_0} m_{\varsigma_j} = \mu_r$, $\mu_r$ being the multiplicity of zero resonance of $P$.  Modifying the basis of the eigenfunctions $\{\varphi_\nu^{(j)}\}$ of $-\Delta_{\bS^{n-1}} + q(\theta)$ if necessary, one can show (see (\ref{rstates}) and the remarks following it) that there exists a basis of
 $ \varsigma_j$-resonant states, $u_{j}^{(i)}$, $i =1, \cdots, m_{\varsigma_j}$ verifying
 \begin{equation}\label{fl12}
|c_{\varsigma_j}|^{1/2}\w{\widetilde{W} u_{j}^{(l)},   |x|^{-\frac{n-2}{2} + \varsigma_j } \varphi_{\varsigma_j}^{(l')}} =
\delta_{ll'}, \quad 1 \le  l \le m_{\varsigma_j}, 1 \le l' \le n_{\varsigma_j}, \quad 1 \le j \le {\kappa_0},
 \end{equation}
where $c_{\nu}$ is the coefficient of $G_{\nu, \delta_\nu}$ given by
\begin{equation} \label{f17}
c_\nu =  -\frac{e^{-i \pi \nu} \Gamma(1-\nu)}{\nu 2^{2\nu+1} \Gamma (1+ \nu )},  \nu \in ]0,1[; \quad c_1 =
- \frac 1 8.
\end{equation}
(see (\ref{Gnuzero}) and (\ref{Gunun})) and $\delta_{ll'}=1$ if $l =l'$; $0$ otherwise. As seen in \cite{Wang01}, we can choose a basis $\{\phi_j; j=1, \cdots, \mu\}$ of $\vN$, $\mu =\dim \vN$, such that
\[
\w{\phi_i, \widetilde W \phi_j} =\delta_{ij}.
\]
Without loss, we can assume that for $1 \le j \le \mu_r$, $\phi_j$ is resonant state of $P$, while for $ \mu_r + 1 \le j \le \mu$, $\phi_j$ is an eigenfunction of $P$. Define
\begin{eqnarray}
Q_r & = &\sum_{j=1}^{\mu_r} \w{\widetilde W \phi_j, \cdot} \phi_j \\
Q_e & = &\sum_{j=\mu_r + 1}^{\mu} \w{\widetilde W \phi_j, \cdot} \phi_j.
\end{eqnarray}

  The following result can be proved by studying an appropriate Grushin problem  for $(1- \widetilde R_0(z) \widetilde W)$ as in \cite{Wang01}. See \cite{jia} for the details.

\begin{theorem} \label{th1}
 Let $\mu =\dim \mathcal{N} \neq 0$ and $N \in \bN$. Assume
${\rho_0} > \max\{ 4N-2, 2N + 4\}.$
One has the following asymptotic expansion for $R(z)$ in
$\mathcal{L}(-1, s; 1, -s)$, $s > \max\{2N+1, 2\} $:
\begin{equation}\label{fl13}
R(z) =  \sum_{j=0}^{N-1}z^{j}T_{j} +   \sum^{(1)}_{\{\vec{\nu}\} +j \le N-1}z_{\vec{\nu}} z^j T_{{\vec{\nu}};
j}
  +
T_e (z)  + T_r(z) + T_{er}(z) +O(|z|^{N-1 + \epsilon})
\end{equation}
Here $T_j$ (resp., $T_{{\vec{\nu}},j}$) is in $\mathcal{L}(1,-s; -1, s)$ for $s>2j+1$ (resp.,  for $s
>2j+1+\{\vec{\nu}\}$),
\[
T_0 = \widetilde A F_0, \quad T_1= \widetilde  A F_1 (1+ \widetilde W \widetilde A F_0)
\]
with $\widetilde A = ( \Pi' (1 -  F_0 \widetilde W ) \Pi')^{-1}\Pi' $, $\Pi'$ is the projection from
$H^{1,-s}$ onto \mbox{ \rm ran} $( 1 -  F_0 \widetilde W )$ corresponding to the decomposition $H^{1,-s} = \ker ( 1 -  F_0 \widetilde W ) \oplus \ran ( 1 -  F_0 \widetilde W )$.
 The sum $\sum^{(1)}_{\{\vec{\nu}\} +j \le N}$ has the same meaning as in (\ref{lrafl05})
  and the first term in this sum  is $z_{\nu_0}$ with coefficient $T_{\nu_0,0}$ given by
$$ T_{\nu_0,0} = \widetilde A G_{\nu_0,\delta_{\nu_0}}\pi_{\nu_0} (1 + \widetilde W  \widetilde A F_0), $$
 where
$\nu_0$ is the smallest value of $\nu\in \sigma_{\infty}$. $T_{e}(z)$, $T_r(z)$ describe the contributions up to
the order $O(|z|^{N-1+ \epsilon})$ from eigenfunctions and resonant states, respectively, and $T_{er}(z)$ the
interaction between eigenfunctions and resonant states. One has
\begin{eqnarray*}
T_e(z) & = & -z^{-1} \Pi_0  +    \sum^{(-)}_{ j, \, \{\vec{\nu}\} +j \le N-1 }
z_{\vec{\nu}} z^j  T_{e;{\vec{\nu}};  j} \\
T_r(z) & =& \sum_{j=1}^{\kappa_0} z_{\varsigma_j}^{-1}( \Pi_{r,j} + \sum^{+, N}_{\alpha, \beta,\vec{\nu},l}
z_{\vec{\nu}} z^{|\beta|}(z_{\vec{\varsigma}})^{-\alpha -\beta} z^{l}
T_{r; \vec{\nu}, \alpha, \beta, l, j} ),  \quad \quad  \mbox{ with } \\
\Pi_{r,j} & = & e^{i \pi \varsigma'_j } \sum_{l=1}^{m_{\varsigma_j}}  \w{ \cdot, u_j^{(l)}} u_j^{(l)}, \quad j
=1,
 \cdots, {\kappa_0}, \\
T_{er}(z) &= &  \sum_{j=1}^{\kappa_0} z_{\varsigma_j}^{-1}( \Pi_0\widetilde W Q_eF_1 \widetilde W  \Pi_{r,j}  + \Pi_{r,j} \widetilde W Q_rF_1 \widetilde W \Pi_0  \\
& & + \sum^{+, N}_{\alpha, \beta,\vec{\nu},l} z_{\vec{\nu}}
z^{|\beta|}(z_{\vec{\varsigma}})^{-\alpha -\beta} z^{l} T_{er; \vec{\nu}, \alpha, \beta, l, j} ).
\end{eqnarray*}
 Here $ \varsigma_j' = \varsigma_j -[\varsigma_j]$, $\Pi_0$ is the spectral projection of $P$ at $0$, and
$T_e(z)$ is  of rank not exceeding Rank $\Pi_0$ with  leading singular parts  given by $\nu_j \in\sigma_2$:
\begin{eqnarray}
T_{e; {\vec{\nu}};  -1} &= &  (-1)^{k'+1} (\Pi_0 \widetilde W G_{\nu_1,1+\delta_{\nu_1}}\pi_{\nu_1} \widetilde W
)\cdots (\Pi_0 \widetilde W G_{\nu_{k'},1+ \delta_{\nu_{k'}}}\pi_{\nu_{k'}} \widetilde W )\Pi_0,
\end{eqnarray}
for ${\vec{\nu}} = (\nu_1, \cdots, \nu_{k'})\in \sigma_2^{k'}$ with $\{\vec{\nu}\}  \le 1$, $
(z_{\vec{\varsigma}})^{-\alpha} = (z_{\varsigma_1})^{-\alpha_1}\cdots
(z_{\varsigma_{\kappa_0}})^{-\alpha_{\kappa_0}}$.
The summation   $ \sum^{(-)}_{ j, \, \{\vec{\nu}\} +j \le N-1 }$ is taken over all indices
 $\vec{\nu} \in (\sigma_N)^k$, $k \in \bN^*$  and all $j\in \bZ$ with $j \ge [\vec{\nu}]_- -1$ such that
$  \{\vec{\nu}\} +j \le N-1  $, and the summation $\sum^{+, N}_{\alpha, \beta, \vec{\nu},l}$ is taken over all
possible $\alpha $, $\beta \in \mathbb{N}^{\kappa_0}$ with $1 \le |\alpha| \le N_0$, $|\beta| \ge 1$,
$\vec{\nu} =(\nu_1, \cdots, \nu_{k'})\in \sigma_N^{k'}$, $ k' \ge 2|\alpha| $,   for which there are at least
$\alpha_k$ values of $\nu_j$'s belonging to  $\sigma_1$ with $\nu_j \ge \varsigma_k$,  for $ 1 \le k \le
{\kappa_0}$, $l\in\mathbb{N}$, satisfying
$$|\beta| + \{\vec{\nu}\}  + l -  \sum_{k=1}^{\kappa_0} ( \alpha_k +
\beta_k) \varsigma_k \le N.$$
\end{theorem}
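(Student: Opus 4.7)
The plan is to apply a Grushin reduction to the operator $M(z) = 1 - \widetilde R_0(z)\widetilde W$, combined with the identity $R(z) = M(z)^{-1}\widetilde R_0(z)$. By Proposition~\ref{prop3.4}(b), $M(0) = 1 - F_0 \widetilde W$ is Fredholm of index zero on $H^{1,-s}$, its kernel is $\vN(P)$, and one has the topological direct sum $H^{1,-s} = \vN(P) \oplus \ran(1 - F_0\widetilde W)$. Letting $\Pi$ be the projection onto $\vN(P)$ along the second factor, I would augment $M(z)$ by finite-rank operators $R_+ : H^{1,-s} \to \vN(P)$ and $R_- : \vN(P) \to H^{1,-s}$ to form a $2\times 2$ block operator $\mathcal M(z)$ on $H^{1,-s}\oplus \vN(P)$ that is invertible for $|z|$ small. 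Writing the entries of $\mathcal M(z)^{-1}$ as $(E(z), E_+(z), E_-(z), E_\pm(z))$, the Schur complement identity
\begin{equation}
M(z)^{-1} = E(z) - E_+(z)\, E_\pm(z)^{-1}\, E_-(z)
\end{equation}
reduces the whole problem to the analysis of the finite-dimensional matrix $E_\pm(z)$ acting on $\vN(P)$: every singular contribution to $R(z)$ comes from its inverse.

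The next step is to compute $E_\pm(z)$ using the expansion of $\widetilde R_0(z)$ from Proposition~\ref{lrapp01}. The bases of $\vN(P)$ fixed by the normalization (\ref{fl12}) and by $\w{\phi_i, \widetilde W \phi_j} = \delta_{ij}$ are essential. They ensure that on the block of $E_\pm(z)$ spanned by $\varsigma_j$-resonant states, the leading coefficient is $-c_{\varsigma_j} z_{\varsigma_j}$ times the identity, coming from the term $G_{\varsigma_j, \delta_{\varsigma_j}}\pi_{\varsigma_j}$ in $\widetilde R_0(z)$, while on the block spanned by genuine eigenfunctions the leading term is $-z$ times the identity, arising from $zF_1$ (using $P\phi_j = 0$, i.e.\ $\widetilde W \phi_j = \widetilde P_0 \phi_j$). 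The orthogonality property (\ref{caract}) guarantees that the off-diagonal resonance/eigenfunction blocks are of subleading size, so $E_\pm(z)$ can be inverted as a diagonal matrix plus a Neumann-summable perturbation. Distributing this inverse through the Schur complement identity produces the singularities $-z^{-1}\Pi_0$ (giving $T_e(z)$), $z_{\varsigma_j}^{-1}\Pi_{r,j}$ (giving $T_r(z)$), and the coupling terms in $T_{er}(z)$.

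The main technical obstacle will be the combinatorial bookkeeping of the Neumann series for $E_\pm(z)^{-1}$ together with the product $M(z)^{-1}\widetilde R_0(z)$. Specifically, one must verify the exponent constraint $|\beta| + \{\vec\nu\} + l - \sum_k (\alpha_k + \beta_k)\varsigma_k \le N$ appearing in the sum $\sum^{+,N}$ in (\ref{fl13}). This is proved by induction on the order of the expansion: every negative power $(z_{\varsigma_k})^{-1}$ produced by $E_\pm(z)^{-1}$ is born together with a compensating matrix entry of order at least $z_{\varsigma_k}$, and the additional $z$ or $z_{\vec\nu}$ factors always enter with nonnegative exponents. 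The hypothesis $\rho_0 > \max\{4N-2, 2N+4\}$ is used at two places: it ensures that $\widetilde W$ maps the weighted Sobolev spaces correctly at each step of the iteration, and it allows one to choose a truncation order $N_0$ (depending on $N$ and $\min\sigma_\infty$) so large that the remainder $\widetilde R_0^{(N_0)}(z)$ absorbs every negative power of $z_{\varsigma_k}$ generated by the inversion, yielding the final error $O(|z|^{N-1+\epsilon})$. The detailed verification follows the scheme of \cite{Wang01}, with the modifications spelled out in \cite{jia}.
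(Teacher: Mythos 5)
Your proposal follows essentially the same route the paper indicates: a Grushin (Feshbach--Schur) reduction for $1-\widetilde R_0(z)\widetilde W$ built on the Fredholm decomposition of Proposition~\ref{prop3.4}(b), the expansion of $\widetilde R_0(z)$ from Proposition~\ref{lrapp01}, and the normalizations (\ref{fl12}), $\w{\phi_i,\widetilde W\phi_j}=\delta_{ij}$ together with (\ref{caract}) to diagonalize the effective finite-dimensional matrix; this is precisely the scheme of \cite{Wang01} adapted as in \cite{jia}, which is what the paper itself invokes. The outline is correct, with the caveat that (like the paper's own sketch) the combinatorial bookkeeping for the $\sum^{+,N}$ constraint and the exact phase factors $e^{i\pi\varsigma_j'}$ in $\Pi_{r,j}$ are asserted rather than carried out.
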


This theorem is proved \cite{Wang01} for the Schr\"{o}dinger operator of the form $P=\widetilde P_0 +V$ with $V$ bounded and satisfying $|V| \le C \langle x\rangle ^{-\rho_0}$. In the present work,  $V= -
\frac{\chi_1^2}{r^2}q(\theta) + \widetilde{W} $ has a critical singularity at $0$. So we can not directly use the result of \cite{Wang01}, but with the help of Proposition \ref{prop3.4}, one can follow the same line of proof to obtain Theorem \ref{th1} (see \cite{jia}). With the sign correction on $c_1$ and the formula (4.32) in \cite{Wang01}, one sees that a  $1$-resonant state $u_j^{(l)}$ gives rise to a singularity of the leading term $ \frac{1}{z\ln z}\w{\cdot, u_j^{(l)}}u_j^{(l)}$ instead of  $-\frac{1}{z\ln z}\w{\cdot, u_j^{(l)}}u_j^{(l)}$ as stated in Theorem 4.6 of \cite{Wang01}.


\section{Generalized residue of the trace function}

\vspace{0.3cm}

Let $f$ be a function satisfying the condition of Theorem \ref{rfth01}. As we have seen before,
$(R(z)-R_0(z))f(P)$ is of trace class for $z \notin \sigma (P)$. Moreover, the application $z \to T(z) =
\text{Tr}~ [(R(z) -R_0(z))f(P)]$ is meromorphic on $\mathbb C \backslash \mathbb R_+$. The goal of this section
is to calculate the generalized residue of $T(z)$ at $z=0$, in the sense of Section 2.

\vspace{0.5cm}
\par\noindent
First, let us recall some well-known results for the trace ideals $\mathscr{S}_p \subset {\mathcal
L}(L^2(\mathbb{R}^n))$, (see \cite[IX.4]{Reed04} for details).

\vspace{0.3cm}
\begin{definition}
For $1\leq p<\infty$, we say that a compact operator $A\in \mathscr{S}_p$ if $|A|^p  $ is a trace class
operator, where $| A| = \sqrt{A^* A}$, and we set $||A||_p =(\text{Tr}~|A|^p)^{1/p}$. For $p=\infty,\
\mathscr{S}_{\infty}$ is the set of the compact operators with $||A||_{\infty} = ||A||$.
\end{definition}

\par\noindent
We have the following properties : \vspace{0.3cm}

\begin{prop}(\cite[IX.4]{Reed04})\label{schatten}
Let $1\le p \le \infty$ and $p^{-1} +q^{-1} =1$.

(a) If $A\in \mathscr{S}_p $ and $B\in \mathscr{S}_q $, then $AB \in \mathscr{S}_1$ and $||AB||_1 \le ||A||_p
\cdot||B||_q$.

(b) $\mathscr{S}_p $ is a Banach space with norm $||\cdot||_p$.

(c) $\mathscr{S}_1 \subset \mathscr{S}_p  $ .

(d) If $A\in\mathscr{S}_p$, then $A^*\in\mathscr{S}_p$ and $||A^*||_p =||A||_p$.

\end{prop}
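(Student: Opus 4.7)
The plan is to reduce every statement to properties of singular values via the singular value decomposition. For a compact operator $A$ on a Hilbert space one has $A = \sum_n s_n(A)\, \langle \cdot, e_n\rangle f_n$, where $s_1(A) \ge s_2(A) \ge \cdots$ are the eigenvalues of $|A|=(A^*A)^{1/2}$ in nonincreasing order and $\{e_n\},\{f_n\}$ are orthonormal systems. From functional calculus one has $\|A\|_p^p = \text{Tr}(|A|^p) = \sum_n s_n(A)^p$ for $1\le p<\infty$, and $\|A\|_\infty = s_1(A) = \|A\|$. All four assertions then translate into classical facts about the sequence $(s_n(A))$ in $\ell^p$.

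For (d) I would invoke the polar decomposition $A = U|A|$ with $U$ a partial isometry from $\overline{\text{ran}\,|A|}$ onto $\overline{\text{ran}\,A}$. Then $|A^*|^2 = AA^* = U|A|^2 U^*$, so $s_n(A^*) = s_n(A)$ and $\|A^*\|_p = \|A\|_p$. For (c), monotonicity of the singular value sequence yields $s_n(A)^p \le s_1(A)^{p-1}\, s_n(A) \le \|A\|_1^{p-1}\, s_n(A)$; summing in $n$ gives $\|A\|_p \le \|A\|_1$, so $\mathscr{S}_1 \subset \mathscr{S}_p$ continuously.

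The core of (a) is a Ky Fan-type inequality. Using the min--max characterization $s_k(T) = \inf\{\|T-F\|;\ \text{rank}\,F < k\}$, one proves the two-sided bound $s_{j+k-1}(AB) \le s_j(A)\, s_k(B)$, from which a rearrangement argument gives $\sum_{k\le N} s_k(AB) \le \sum_{k\le N} s_k(A)\, s_k(B)$ for every $N$. Applying the classical Hölder inequality on sequences with exponents $p,q$ satisfying $p^{-1}+q^{-1}=1$ and letting $N\to\infty$ yields $\|AB\|_1 \le \|A\|_p\, \|B\|_q$. Alternatively, one can bypass Ky Fan by complex interpolation (Calderón) between the clear endpoints $(p,q)=(1,\infty)$ and $(\infty,1)$, where the estimate follows from the definition of trace class and $\|AB\|_1\le \|A\|_1\|B\|$.

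Finally for (b), I would pick a $\|\cdot\|_p$-Cauchy sequence $\{A_n\}$. Since $\|\cdot\|_\infty \le \|\cdot\|_p$, it is operator-norm Cauchy and converges to some bounded $A$, which is compact as an operator-norm limit of compact operators. Lower semi-continuity of $\|\cdot\|_p$ with respect to strong convergence (a consequence of Fatou's lemma applied to the singular value sequences) gives $\|A_n - A\|_p \to 0$, hence completeness. The main technical obstacle is the Ky Fan inequality; once that is established the remaining three statements follow by elementary manipulations of $\ell^p$-sequences, and the whole proposition rests on the SVD machinery.
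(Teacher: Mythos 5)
The paper offers no proof of this proposition at all: it is quoted verbatim from the reference \cite[IX.4]{Reed04} and used as a black box, so there is nothing internal to compare your argument against. Your outline is essentially the standard proof of these Schatten-class facts and is sound in structure: the reduction of everything to the singular value sequence via the SVD, the polar-decomposition argument $AA^*=U|A|^2U^*$ for (d), the monotonicity estimate $s_n(A)^p\le s_1(A)^{p-1}s_n(A)$ for (c), and the operator-norm-limit plus Fatou argument for completeness in (b) are all correct. The one place where your sketch is looser than a proof is in (a): the pointwise bound $s_{j+k-1}(AB)\le s_j(A)\,s_k(B)$ (which you correctly justify via the approximation-number characterization) does not by itself yield the additive Fan inequality $\sum_{k\le N}s_k(AB)\le\sum_{k\le N}s_k(A)s_k(B)$ by a mere ``rearrangement''; the standard route is through Horn's multiplicative majorization $\prod_{k\le N}s_k(AB)\le\prod_{k\le N}s_k(A)s_k(B)$ followed by the lemma that weak log-majorization implies weak majorization, or alternatively the duality formula $\|T\|_1=\sup\sum_n|\langle T e_n,f_n\rangle|$ over orthonormal systems combined with H\"older. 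Your fallback via complex interpolation between the endpoints $(1,\infty)$ and $(\infty,1)$ is a legitimate way to close that gap. For the purposes of this paper the proposition is a citation, not a result to be proved, so no more is required.
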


\vspace{0.5cm} \par\noindent By Proposition \ref{lrapp03}, for $z$ near 0 with $\Im z >0$, one has for $s>1$ :
\begin{equation}\label{estimR0}
R_0(z) = R_0 + R_0^{(0)}(z) \ \ {\rm{in}} \ \ \mathcal {L}(-1,s;1,-s),
\end{equation}
with $R_0^{(0)}(z) = O(\mid z\mid^{\epsilon})$. We deduce the following result :

\vspace{0.2cm}

\begin{lemma}\label{rtlm02}
For $m >n/2$, $s>3$ and  $z$ near 0, $\Im z > 0$, $\langle x\rangle^{-s} R_0(z)\langle x\rangle^{-s} \in
\mathscr{S}_m$ and there exists a constant C independent of $z$, such that $$||\langle x\rangle^{-s}
R_0(z)\langle x\rangle^{-s} ||_m \le C.$$
Moreover, $\langle x\rangle^{-s} (R(z) -R_0 )\langle x\rangle^{-s} = O(|z|^\epsilon)$  in $
\mathscr{S}_m$.
\end{lemma}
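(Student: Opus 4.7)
The strategy is to combine the Kato--Seiler--Simon (KSS) inequality with the form-domain equivalence $Q(P_0) = H^1(\bR^n)$ (ensured by (\ref{sigma}) and, for $n = 2$, by (\ref{Hardy})), and to exploit the resolvent expansion (\ref{estimR0}) to transfer the $z$-dependence into Schatten norms. The first ingredient is the estimate $\langle x\rangle^{-s}(P_0+1)^{-1/2} \in \mathscr{S}_{2m}$: KSS gives $\langle x\rangle^{-s}(-\Delta+1)^{-1/2} \in \mathscr{S}_{2m}$ for $m > n/2$ and $s > n/(2m)$ (comfortably covered by $s > 3$), since both $\langle x\rangle^{-s}$ and $(1+|\xi|^2)^{-1/2}$ belong to $L^{2m}(\bR^n)$, while the form-domain equivalence makes $(-\Delta+1)^{1/2}(P_0+1)^{-1/2}$ bounded on $L^2$. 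The factorization
\begin{equation*}
\langle x\rangle^{-s}(P_0+1)^{-1/2} = \langle x\rangle^{-s}(-\Delta+1)^{-1/2}\cdot(-\Delta+1)^{1/2}(P_0+1)^{-1/2}
\end{equation*}
then yields the claim, and taking adjoints and applying H\"older for Schatten classes gives $\langle x\rangle^{-s}(P_0+1)^{-1}\langle x\rangle^{-s} \in \mathscr{S}_m$.

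To obtain the uniform bound for $R_0(z)$, I would then exploit the splitting $R_0(z) = R_0 + R_0^{(0)}(z)$ of (\ref{estimR0}). For the leading term $R_0 = R_0(0)$, iterate the resolvent identity $R_0 = (P_0+1)^{-1}(1 + R_0)$ to write $R_0 = \sum_{j=1}^N (P_0+1)^{-j} + (P_0+1)^{-N}R_0$; each finite-sum term, sandwiched with $\langle x\rangle^{-s}$, lies in $\mathscr{S}_m$, and for $N \ge 1$ the remainder is controlled by $(P_0+1)^{-N}R_0 = (P_0+1)^{-1/2}\cdot[(P_0+1)^{-N+1/2}R_0(P_0+1)^{-1/2}]\cdot(P_0+1)^{-1/2}$, the middle factor being $L^2$-bounded by functional calculus (since $(\lambda+1)^{-N}\lambda^{-1}$ is bounded on $\sigma(P_0) = [0,\infty)$). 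An analogous sandwich factorization for $R_0^{(0)}(z)$, with the middle factor now reinterpreted through the Hilbert-scale isomorphisms $(P_0+1)^{\pm 1/2}: L^2 \leftrightarrow H^1$ and $L^2 \leftrightarrow H^{-1}$ (furnished by the form equivalence) composed with the bound $\|R_0^{(0)}(z)\|_{\vL(-1,s;1,-s)} = O(|z|^\epsilon)$ from (\ref{estimR0}), converts the latter into an $L^2$-bound of order $|z|^\epsilon$ on the middle factor, yielding simultaneously the uniform Schatten bound for $\langle x\rangle^{-s}R_0(z)\langle x\rangle^{-s}$ and the $O(|z|^\epsilon)$ estimate for $\langle x\rangle^{-s}(R_0(z)-R_0)\langle x\rangle^{-s}$ in $\mathscr{S}_m$. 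For the second assertion, I would write $R(z) - R_0(z) = R_0(z)VR(z)$ with $V = O(\langle x\rangle^{-\rho_0})$, $\rho_0 > 2$, and absorb $V$ into one of the outer weight factors so as to reduce to the bound just established (using the analogue of the first assertion for $R(z)$, valid under regularity of $P$ at $0$).

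The main technical obstacle is giving precise meaning to the ``middle factor'' $(P_0+1)^{1/2}R_0^{(0)}(z)(P_0+1)^{1/2}$ as an $L^2$-bounded operator of norm $O(|z|^\epsilon)$: a naive functional-calculus estimate would give a spurious blow-up of order $|z|^{-1}$ near the threshold, whereas the outer weights $\langle x\rangle^{-s}$ together with the threshold expansion (\ref{estimR0}) effectively replace this blow-up by the genuine $O(|z|^\epsilon)$ decay. The sandwich factorization is made rigorous by chaining the Hilbert-scale isomorphisms $(P_0+1)^{\pm 1/2}$ (bounded transformations between $L^2$, $H^1$ and $H^{-1}$ thanks to the form equivalence) with the weighted Sobolev bound of (\ref{estimR0}).
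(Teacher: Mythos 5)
Your overall strategy --- extract a Schatten-class factor from the weights via a Kato--Seiler--Simon bound, and carry the $z$-dependence on an operator-norm-bounded ``middle factor'' controlled by the weighted expansion (\ref{estimR0}) --- is the same in spirit as the paper's proof, which factors $\langle x\rangle^{-s}R_0(z)\langle x\rangle^{-s}$ through the uniformly bounded operator $\langle x\rangle^{-s}R_0(z)\langle x\rangle^{-s'}$ ($s'>1$ close to $1$) and the $\mathscr{S}_m$ operators $\langle x\rangle^{s'}\chi(P_0)\langle x\rangle^{-s}$ and $\langle x\rangle^{s'}R_0(-1)(1-\chi(P_0))\langle x\rangle^{-s}$; your KSS computation and the form-equivalence $Q(P_0)=H^1$ are fine and are essentially what underlies those memberships. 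The gap is in your treatment of the threshold term. In the iteration $R_0=\sum_{j=1}^N(P_0+1)^{-j}+(P_0+1)^{-N}R_0$ you justify the $L^2$-boundedness of the middle factor by asserting that $(\lambda+1)^{-N}\lambda^{-1}$ is bounded on $\sigma(P_0)=[0,\infty)$. It is not: it diverges like $\lambda^{-1}$ at $\lambda=0$, and correspondingly $R_0=R_0(0)$ is \emph{not} a bounded operator on $L^2$ (zero is the bottom of the essential spectrum of $P_0$); it exists only as the limit of $R_0(z)$ in $\vL(-1,s;1,-s)$, $s>1$. Extra powers of $(P_0+1)^{-1}$ do not repair this, since they contribute no spatial decay. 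This is precisely the difficulty the paper circumvents with the low-energy cutoff $\chi(P_0)$: on the range of $1-\chi(P_0)$ the resolvent identity reduces everything to $R_0(-1)$, where your functional-calculus and KSS arguments apply, while the low-energy piece is handled by keeping a weight $\langle x\rangle^{-s'}$ glued to each side of $R_0(z)$ so that only the weighted bound from (\ref{estimR0}) is ever invoked. Your argument needs to be reorganized along these lines; as written, the step treating $(P_0+1)^{-N}R_0$ fails.

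A second problem concerns the ``moreover'' clause. The displayed $R(z)-R_0$ is a typo for $R_0(z)-R_0$ (the paper's own proof says it repeats the argument with $R_0(z)$ replaced by $R_0(z)-R_0$, and that is the statement actually used later for $(S_3^{k-1}(z))^*$). Your reduction via $R(z)-R_0(z)=-R_0(z)VR(z)$ ``under regularity of $P$ at $0$'' cannot work in the setting of this paper: the standing hypothesis is that $0$ may be an eigenvalue or a resonance of $P$, in which case $R(z)$ blows up as $z\to0$ and no $O(|z|^{\epsilon})$ bound for $R(z)-R_0$ can hold. The correct statement only involves the free-model resolvent $R_0(z)$, for which zero is a regular point by Lemma \ref{lem3.1}.
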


\begin{proof} Let $\chi \in C_0^\infty(\mathbb R)$ such that $\chi(r) = 1$ for $|r| <1$. Then $\langle x\rangle^{-s}
R_0(z)\langle x\rangle^{-s}$ can be written as
\begin{eqnarray*}
\langle x\rangle^{-s} R_0(z)\langle x\rangle^{-s} = F_1(z) +F_2(z),
\end{eqnarray*}
with
\begin{eqnarray*}
 F_1(z)=\langle x\rangle^{-s} R_0(z)\chi(P_0)\langle x\rangle^{-s}; \quad
 F_2(z)=\langle x\rangle^{-s} R_0(z)(1-\chi(P_0))\langle
 x\rangle^{-s}.
\end{eqnarray*}

First, let us study $F_2(z)$. Using the resolvent identity, we can  decompose $F_2(z)=F_{21}+F_{22}(z)$, where
\begin{eqnarray*}
 F_{21} &=& \langle x\rangle^{-s} R_0(-1)(1-\chi(P_0))\langle
 x\rangle^{-s},\\
 F_{22}(z)& =& (1+z)\ (\langle x\rangle^{-s} R_0(z)\langle x\rangle^{-s'})
 (\langle x\rangle^{s'} R_0(-1)(1-\chi(P_0))\langle x\rangle^{-s}).
\end{eqnarray*}

It is easy to check that $F_{21}$ and $\langle x\rangle^{s'} R_0(-1)(1-\chi(P_0))\langle x\rangle^{-s}$ are in
$\mathscr{S}_m$, if $s'>1$ is chosen close to 1. By (\ref{estimR0}), it is clear that  $\langle x\rangle^{-s}
R_0(z)\langle x\rangle^{-s'}$ is uniformly bounded, for $z$ near 0, $\Im z > 0$. Then we deduce that $F_2(z) \in
\mathscr{S}_m $, and $||F_2(z)||_m \le C$ with some constant $C$ independent of $z$.

Now, let us study $F_1 (z)$. We write $F_1(z) = \langle x\rangle^{-s} R_0(z)\langle x\rangle^{-s'} (\langle
x\rangle^{s'}\chi(P_0)\langle x\rangle^{-s}) $ with $s'>1$ close to $1$. Using a similar argument as above, we
can get that $||F_1(z)||_m \le C$ with some constant $C$ independent of $z$.

Therefore $\langle x\rangle^{-s} R_0(z)\langle x\rangle^{-s} \in \mathscr{S}_m$, and for some $C$ independent of
$z$,
\begin{eqnarray*}
||\langle x\rangle^{-s} R_0(z)\langle x\rangle^{-s} ||_m \le ||F_1(z)||_m +||F_2(z)||_m \le C .
\end{eqnarray*}

Repeating the same arguments with $R_0(z)$ replaced by $R_0(z) -R_0$, we obtain that
$\langle x\rangle^{-s} (R(z) -R_0 )\langle x\rangle^{-s} = O(|z|^\epsilon)$  in $
\mathscr{S}_m$.
\end{proof}

We can now establish the main result of this section :

\begin{theorem}\label{rtth01}
Assume that $\rho> \max \{ 6,n+2\}$ and $f$ satisfies the condition of  Theorem \ref{rfth01}. Then the
generalized residue of $T(z)=\text{Tr}~[(R(z) -R_0(z))f(P)]$ at $z=0$ is given by
\begin{eqnarray}\label{rtfl05}
J_0 = \mathcal{N}_0 + \sum_{j=1}^{\kappa_0} \varsigma_j m_j ,
\end{eqnarray}
where $\mathcal{N}_0$ is the multiplicity of zero as the eigenvalue of $P$ and $m_j$ the multiplicity of
$\varsigma_j$-resonance of zero.
\end{theorem}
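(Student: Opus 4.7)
The plan is to insert the low-energy expansions of $R_0(z)$ from Proposition \ref{lrapp03} and of $R(z)$ from Theorem \ref{th1} into the trace function $T(z) = \text{Tr}[(R(z) - R_0(z))f(P)]$ and to read off the generalized residue
\[
J_0 = -\frac{1}{2\pi i}\lim_{\delta\to 0}\lim_{\epsilon\to 0}\int_{\gamma(\delta,\epsilon)} T(z)\,dz
\]
term by term. A preliminary calculation on $\gamma(\delta,\epsilon)$, parametrised as $z=\delta e^{i\theta}$ with $\arg z\in(0,2\pi)$, shows that for any $a\in\bR$ and $b\in\bN$,
\[
-\frac{1}{2\pi i}\int_{\gamma(\delta,\epsilon)} z^a(\log z)^b\,dz \xrightarrow[\epsilon\to 0]{} \delta^{a+1} P_{a,b}(\log\delta)
\]
for a polynomial $P_{a,b}$, so the $\delta\to 0$ limit vanishes when $\text{Re}(a)>-1$ or when $a=-1$ and $b\ge 1$, and equals $1$ exactly when $a=-1,b=0$. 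Hence only the genuine $z^{-1}$-coefficient of $T(z)$ contributes.

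By Lemma \ref{lem3.1} and Proposition \ref{lrapp03}, every monomial in the expansion of $R_0(z)$ carries a $z$-power of real part $\ge 0$, so after compensating the trace-class defect via the decomposition (\ref{decomp}) and Lemma \ref{rtlm02}, the contribution of $R_0(z)$ to $J_0$ is zero. Similarly, the analytic pieces $\sum z^j T_j$ and the $z_{\vec\nu}z^j$-terms $\sum^{(1)} z_{\vec\nu}z^j T_{\vec\nu,j}$ of $R(z)$ in Theorem \ref{th1} produce no $z^{-1}$ contribution. The eigenvalue contribution is then immediate: the Laurent pole $-z^{-1}\Pi_0$ in $T_e(z)$, together with $\Pi_0 f(P)=\Pi_0$ (since $f\equiv 1$ near $0$), yields
\[
-\frac{1}{2\pi i}\int_{\gamma(\delta,\epsilon)}(-z^{-1})\text{Tr}[\Pi_0 f(P)]\,dz \longrightarrow \vN_0,
\]
while the sub-leading terms $z_{\vec\nu}z^{-1} T_{e;\vec\nu;-1}$ with $\vec\nu\neq\emptyset$ all carry a $z$-exponent strictly $>-1$ and drop out.

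The core of the proof, and the main obstacle, is the resonance contribution $\sum_{j=1}^{\kappa_0}\varsigma_j m_j$. The pieces $z_{\varsigma_j}^{-1}\Pi_{r,j}$ in $T_r(z)$ and the cross pieces $z_{\varsigma_j}^{-1}[\Pi_0\widetilde W Q_e F_1\widetilde W\Pi_{r,j}+\Pi_{r,j}\widetilde W Q_r F_1\widetilde W\Pi_0]$ in $T_{er}(z)$ are \emph{not} $z^{-1}$-singular by themselves, and contribute $0$ by the same contour-integral analysis. The $z^{-1}$-coefficient must therefore be harvested from the sub-leading series $\sum^{+,N}_{\alpha,\beta,\vec\nu,l}$, by singling out the combinations of $(\alpha,\beta,\vec\nu,l)$ for which the total $z$-exponent
\[
-\varsigma_j'+\{\vec\nu\}+|\beta|-\sum_k(\alpha_k+\beta_k)\varsigma_k'+l
\]
equals $-1$ with vanishing residual log-count in the integer case. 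My strategy would be: (i) invoke trace cyclicity together with the normalisation (\ref{fl12}) of the resonant basis $\{u_j^{(l)}\}$ and the orthogonality (\ref{caract}) to collapse each finite-rank operator $T_{r;\vec\nu,\alpha,\beta,l,j}$ and $T_{er;\vec\nu,\alpha,\beta,l,j}$ to an explicit multiple of $\sum_l\w{\cdot,u_j^{(l)}} u_j^{(l)}$; (ii) identify, for each $\varsigma_j$, the unique minimal combinatorial family whose $z$-exponent equals $-1$; (iii) compute the resulting trace, the factor $\varsigma_j$ emerging from the explicit constant $c_{\varsigma_j}$ of $G_{\varsigma_j,\delta_{\varsigma_j}}$ recorded in (\ref{f17}) and the phase $e^{i\pi\varsigma_j'}$ in $\Pi_{r,j}$, and the multiplicity $m_{\varsigma_j}$ from the rank of $\Pi_{r,j}$.

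The principal technical difficulty is the combinatorial bookkeeping: several distinct multi-indices $(\alpha,\beta,\vec\nu,l)$ satisfy the exponent equation, and one must show that all non-minimal contributions either cancel pairwise or vanish through the orthogonality $\Pi_0\widetilde W u_j^{(l)}=0$ encoded in (\ref{rstates}) and (\ref{caract}). The eigenvalue-resonance mixed terms of $T_{er}(z)$ must also be shown to contribute nothing extra, since (\ref{fl12}) together with $\Pi_0\widetilde W u_j^{(l)}=0$ kills all potentially dangerous cross-traces. Once this is carried out, summing the eigenvalue, resonance and interaction contributions gives $J_0=\vN_0+\sum_{j=1}^{\kappa_0}\varsigma_j m_j$, as claimed.
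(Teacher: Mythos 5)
Your proposal misidentifies the mechanism that produces the resonance contribution, and this is a genuine gap rather than a presentational difference. You correctly observe that the leading resonance singularities $z_{\varsigma_j}^{-1}\Pi_{r,j}$ of $R(z)$ are of order $z^{-\varsigma_j}$ (or $(z\ln z)^{-1}$), hence carry no $z^{-1}$ part by themselves, but you then conclude that the coefficient of $z^{-1}$ must be harvested from the sub-leading series $\sum^{+,N}_{\alpha,\beta,\vec\nu,l}$ in $T_r(z)$. The paper proves exactly the opposite: those sub-leading terms contribute nothing (this is Lemma \ref{T3}, $T_{32}(z)\approx 0$, established via (\ref{caract}) and the order counting (\ref{subtle})). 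The actual source of the term $\varsigma_j m_{\varsigma_j}/z$ is the product of $z_{\varsigma_j}^{-1}\Pi_{r,j}$ with the term $\frac{d}{dz}\bigl(z_{\varsigma_j}\bigr)R_{\varsigma_j,0}$ coming from the \emph{derivative} of $R_0(z)$, since $z_{\varsigma_j}^{-1}\frac{d}{dz}z_{\varsigma_j}=\varsigma_j/z$; the normalization (\ref{fl12}) then converts the resulting pairings into the count $m_{\varsigma_j}$. The derivative $\frac{d}{dz}R_0(z)$ enters only because one first rewrites $T(z)$ through the resolvent identity $R(z)-R_0(z)=\sum_{j}(-1)^j(R_0V)^jR_0+(-1)^kR(VR_0)^k$ and uses cyclicity of the trace to produce $R_0(z)^2=\frac{d}{dz}R_0(z)$. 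Your plan contains no step that generates this derivative, so the factor $\varsigma_j$ cannot appear; following your route one would (incorrectly) find a vanishing resonance contribution.

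A second, related defect is that the term-by-term insertion of the expansions into the trace is not legitimate as stated. The expansions of Proposition \ref{lrapp03} and Theorem \ref{th1} hold in $\vL(-1,s;1,-s)$, not in trace norm; $f(P)$ is not trace class; and $\Pi_{r,j}$ is built from resonant states $u_j^{(l)}\notin L^2$, so the quantity $\mathrm{Tr}[\Pi_{r,j}f(P)]$ that your scheme would require is not even defined. The paper circumvents this by distributing enough factors of $V$ (which decay like $\w{x}^{-\rho_0}$) between consecutive resolvents, so that every piece lands in a Schatten class via Lemma \ref{rtlm02} and Proposition \ref{schatten}; the resonant states are then always paired against $Vu_j^{(l)}$ or $\widetilde W u_j^{(l)}$, which decay. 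Even your eigenvalue computation, which happens to give the right answer because $\Pi_0$ is finite rank with range in $L^2$, is justified in the paper not by $\mathrm{Tr}[\Pi_0 f(P)]=\vN_0$ but through the identity $\w{F_1\widetilde W\Phi_i,\widetilde W\Phi_j}=\delta_{ij}$ paired against $R_1$ from $\frac{d}{dz}R_0(z)$.
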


\begin{proof}
The proof is rather long and is divided in two steps.  We write $f \approx g$ if $f(z)-g(z) = O(|z|^{-1+\epsilon})$ for some $\epsilon >0$, with $z$ in a neighborhood of $0$ and $\Im z >0$.

We  fix $k\in \mathbb N$ with $k> \frac{n}{2} +1$ and we use the following
resolvent identities :
\begin{eqnarray*}\label{resolvent}
R(z)- R_0(z) &=& - R_0(z)V R(z) \\
             &=& \sum_{j=1}^{k-1} (-1)^j (R_0(z)V)^j R_0(z) + (-1)^k R(z) (VR_0(z))^k .
\end{eqnarray*}
Decompose $T(z) = -\text{Tr}~ [R_0(z) V f(P) R(z) ]$ as $T(z) = T_1(z) + T_2(z)$ where
\begin{eqnarray}
T_1(z) &=&- \text{Tr}~ [R_0(z) V  f(P)(R_0(z) + \sum_{j=1}^{k-1} (-1)^j (R_0(z) V)^j R_0(z))] .\\
T_2(z) &=& (-1)^{k+1} \text{Tr} ~[R_0(z) V  f(P) R(z) (VR_0(z) )^k)].
\end{eqnarray}

 First, let us study $T_1(z)$. Using the cyclicity of the trace, one has, for $s>1$,
close to $1$,
\begin{eqnarray*}
\text{Tr}~ [R_0(z)V f(P) R_0(z)] &=& \text{Tr}~ [R_0^2(z)V f(P)] \\
                                 &=& \text{Tr} [(\langle x\rangle ^{-s}\frac{d}{dz}R_0(z)
                                 \langle x\rangle^{-s})\ (\langle x\rangle ^{s}V f(P)\langle x\rangle ^{s})].
\end{eqnarray*}
Since $\rho>n+2, \ \langle x\rangle ^{s}V f(P)\langle x\rangle ^{s} \in \mathscr{S}_1$. So, by  Proposition
\ref{lrapp03}, we deduce that
\begin{eqnarray}
\text{Tr}~ [R_0(z)V f(P) R_0(z)] \approx 0.
\end{eqnarray}
Similarly, we can get that the other terms of $T_1(z)$ give the same contribution. Therefore, we have obtained :
\begin{equation}\label{Tun(z)}
T_1(z) \approx 0.
\end{equation}

Now, let us  study $T_2(z)$. Since $k>\frac{n}{2}+1> \frac{n}{2}-2$, we can  define
$T_3(z)$ for $z \notin \sigma (P)$ by  :
\begin{equation}
T_3(z) =(-1)^{k+1} \text{Tr} ~[R_0(z) V R(z) (VR_0(z) )^k)].
\end{equation}
Then, for $s>1$ close to $1$, we have as previously,
\begin{eqnarray*}
T_2(z) -T_3(z) &=&(-1)^{k} \text{Tr} ~[R_0(z) V  R(z) (1-f(P))(VR_0(z) )^k)] . \\
               &=& (-1)^{k} \text{Tr} ~[(\langle x\rangle^{-s} \frac{d}{dz}R_0(z)
                    \langle x\rangle^{-s}) \langle x\rangle^{s} V (R(z) (1-f(P))) \\
                & &   \hspace{1.6cm} (VR_0(z))^{k-1}V\langle x\rangle^{s}].
\end{eqnarray*}
Since $1-f(t)$ is equal to 0 for $t$ near 0, $R(z)(1-f(P))$ is uniformly bounded for $z$ in a neighborhood of
$0$. Moreover, since $\rho_0>6$ and $k > \frac{n}{2} +1$, Proposition \ref{schatten} (a) and Lemma \ref{rtlm02}
imply that, for $z$ near $0$ and $z \notin \mathbb{R}_+$,
\begin{equation}
\mid\mid (VR_0(z))^{k-1}\ V \langle x\rangle^{s}\mid \mid_1 = O(1).
\end{equation}
By Proposition \ref{lrapp03} (b), we conclude that, for some $\epsilon >0$ and $z $ near 0 with $\Im z>0$,
\begin{equation} \label{Ttrois(z)}
T_2(z) -T_3(z) \approx 0.
\end{equation}
The main task of the proof is to estimate $T_3(z)$ at $0$.

\vspace{0,5cm} \noindent $\bullet$ {\bf{Step 1}}: Assume that 0 is not an eigenvalue of $P$.

\vspace{0.3cm}\noindent
 Using the cyclicity of the trace, we remark that
\begin{equation}
T_3(z)= (- 1)^{k+1}\text{Tr}~ [(\frac{d}{dz}R_0(z))V R(z) V(R_0(z)V)^{k-1}].
\end{equation}
Let us introduce the following notation :
\begin{equation*}
U_1 = sgn (V) \ |V|^{\frac{1}{2}}\ , \  U_2 =  |V|^{\frac{1}{2}},
\end{equation*}
\begin{equation*}
S_1(z) = U_1\frac{d}{dz}R_0(z)U_2\ ,\  S_2(z) = U_1 R(z)U_2 \ ,\ S_3(z) =U_1 R_0(z) U_2.
\end{equation*}
Then,
\begin{equation}
T_3(z) = (-1)^{k+1} \text{Tr} ~[S_1(z)S_2(z)S_3^{k-1}(z)].
\end{equation}
As previously, by Lemma \ref{rtlm02}, we have for $k >\frac{n}{2}+1$, $\rho_0 >6$  and
$z$ near 0 with $\Im (z) >0$,
\begin{equation} \label{estimatetrace}
||S_3^{k-1}(z)||_1 = O(1).
\end{equation}
By Proposition \ref{lrapp03} (b), with $N=1$,
\begin{equation}
\frac{d}{dz}R_0(z) = R_1 + \sum_{ \{\vec{\nu}\} \le 1}^{(1)} \frac{d}{dz} z_{\vec \nu}R_{\vec \nu, 0}  +
O(|z|^{\epsilon}),
\end{equation}
in $\mathcal L (-1,s;1,-s)$, for $s>3$ and some $\epsilon>0$. Since $\rho_0>6$, we deduce that
\begin{equation}
S_1(z) = S_{11}(z) + S_{12}(z),
\end{equation}
with
\begin{equation}
S_{11}(z) =  U_1 R_1 U_2 +    \sum_{ \{\vec{\nu}\} \le 1}^{(1)} \frac{d}{dz} z_{\vec {\nu}} U_1 R_{\vec
{\nu},0}U_2
\end{equation}
and
\begin{equation}\label{sundeux}
S_{12}(z) = O(|z|^\epsilon) \quad \text{ in } {\mathcal{L}} (L^2(\mathbb R^n)).
\end{equation}

 In the same way, we can use Theorem \ref{th1}, with $N=1$ (and $\rho_0 >6)$ to decompose
$S_2(z)$. Note that $T_e (z)=0, \ T_{er}(z)=0$ since $0$ is not an eigenvalue of $P$. So we have,
\begin{equation}
S_2(z) = S_{21}(z) + S_{22}(z) + S_{23}(z)
\end{equation}
with
\begin{eqnarray}
S_{21}(z) &=&   \sum_{j=1}^{\kappa_0} z_{\varsigma_j}^{-1} U_1 \Pi_{r,j}U_2, \hspace{2cm} \\
S_{22}(z) &=&   \sum_{j=1}^{\kappa_0} \sum^{+, 1}_{\alpha, \beta,\vec{\nu},l}
               z_{\varsigma_j}^{-1}z_{\vec{\nu}} z^{|\beta|}(z_{\vec{\varsigma}})^{-\alpha -\beta} z^{l}
                  U_1 T_{r;\vec{\nu}, \alpha, \beta, l, j} U_2 , \\
S_{23}(z) &=& O(1) \quad \text{ in } {\mathcal{L}} (L^2(\mathbb R^n)). \label{sdeuxtrois}
\end{eqnarray}

 It follows from the previous discussion that
\begin{equation}
T_3(z) = T_{31}(z) +T_{32}(z)+T_{33}(z)
\end{equation}
where
\begin{eqnarray}
T_{31}(z) &=& (-1)^{k+1} \  \text{Tr} ~[S_{11}(z)S_{21}(z)S_3^{k-1}(z)], \label{ttroisun} \\
T_{32}(z) &=& (-1)^{k+1} \  \text{Tr} ~[S_{11}(z)S_{22}(z)S_3^{k-1}(z)],  \label{ttroisdeux}\\
T_{33}(z) &=& (-1)^{k+1} \ \text{Tr} ~[(S_{11}(z)S_{23}(z) +S_{12}(z)S_{21}(z) \nonumber \\
          & & \hspace{1,8cm}     +S_{12}(z)S_{22}(z)+S_{12}(z)S_{23}(z))S_3^{k-1}(z)].
\end{eqnarray}

\vspace{0.3cm}
\par\noindent
First, let us study $T_{33}(z)$. We have the following result :

\begin{lemma}\label{T33}
\begin{equation}
T_{33}(z) \approx 0.
\end{equation}
\end{lemma}

\begin{proof}
Clearly, we have for some $\epsilon >0$,
\begin{equation}\label{sunun}
S_{11}(z) = O (|z|^{-1+\epsilon}) \ \ , \ \ S_{21}(z) = O (|z \ln z|^{-1}) \ \ \   {\rm{in}} \ \ \ {\mathcal{L}}
(L^2(\mathbb R^n)).
\end{equation}
In the same way, we want to estimate $S_{22}(z)$. We note that the summation $\sum^{+, 1}_{\alpha,
\beta,\vec{\nu},l} $ is taken over all possible $\alpha,\beta \in \mathbb N^{\kappa_0}$ with $1 \le |\alpha| \le
N_0$, $|\beta|\ge 1$, $\vec{\nu} = (\nu_1 ,\cdots,\nu_{k'}) \in \sigma_2^{k'}$, $k' \ge 2|\alpha|$, for which
there are at least $\alpha_k$ values of $\nu_j$'s belonging to $\sigma_1$ with $\nu_j \ge \varsigma_k $, for $1
\le k \le \kappa_0$ and $l \in \mathbb N$, satisfying the condition
$$
|\beta| + \{\vec{\nu} \}+l -\sum \limits_{k=1}^{\kappa_0}(\alpha_k +\beta_k) \varsigma_k \le 1.
$$
It follows that
\begin{equation}\label{subtle}
z^{|\beta|}(z_{\vec{\varsigma}})^{-\beta} = O(1) \ \ \ ,\ \ \
z_{\vec{\nu}}(z_{\vec{\varsigma}})^{-\alpha}=O(|z|^\epsilon).
\end{equation}
We emphasize that in the second estimate of (\ref{subtle}), we have used $k' \ge 2|\alpha|$. Then, we have
obtained,
\begin{equation} \label{sdeuxdeux}
S_{22}(z) \approx 0 \ \ \ {\rm{in}} \ \ \    {\mathcal{L}} (L^2(\mathbb R^n)).
\end{equation}
Then, the lemma follows from (\ref{estimatetrace}), (\ref{sundeux}), (\ref{sdeuxtrois}), (\ref{sunun}) and
(\ref{sdeuxdeux}).
\end{proof}

The following elementary lemma will be useful to estimate $T_{31}(z)$ and $T_{32}(z)$.

\begin{lemma}\label{reduction}
Let  $u \in {\mathcal N}(P)$, $A = (1-F_0W)^{-1}$. Then
\begin{equation*}
A^* Vu = -{\widetilde W} u \ \ ,\ \ R_0 Vu = -u.
\end{equation*}
\end{lemma}

\begin{proof}
By definition, if $u \in {\mathcal N}(P)$, $F_0 \widetilde{W}u=u$. Thus, $Vu= (W-\widetilde{W})u
=-(1-WF_0)\widetilde{W}u$ which proves the first equality. Using the same argument, if $Pu=(P_0 +V)u=0$ then
Proposition \ref{lrapp03} implies that $R_0Vu=-u$.
\end{proof}

Now, let us study $T_{31}(z)$; we shall see  that $T_{31}(z)$ gives the leading term of the generalized residue.
We denote $\delta_{ij}$ the Kronecker symbol : $\delta_{ij} =1$ if $i=j$, $\delta_{ij}=0$ otherwise.

\begin{lemma}\label{T31}
\begin{equation}
T_{31}(z)  \approx - \frac{1}{z} \sum_{j=1}^{\kappa_0} \varsigma_j m_{\varsigma_j} -
\frac{\delta_{1,\varsigma_{\kappa_0}}} {z\ln z} \ \left( m_1 - \sum_{l=1}^{m_1} <R_1V u_{\kappa_0}^{(l)}, V
u_{\kappa_0}^{(l)}>  \right) .
\end{equation}
\end{lemma}

\begin{proof}
It follows from (\ref{ttroisun}) that
\begin{eqnarray}
(-1)^{k+1} \ T_{31}(z) = &\text{Tr}& ~[U_1 R_1 V  \sum_{j=1}^{\kappa_0} z_{\varsigma_j}^{-1}\Pi_{r,j}U_2
\ S_3^{k-1}(z)] \hspace{2cm} \nonumber \\
+ &\text{Tr}& ~[\sum_{ \{\vec{\nu}\} \le 1}^{(1)} \frac{d}{dz} z_{\vec {\nu}} U_1 R_{\vec {\nu},0} V
\sum_{j=1}^{\kappa_0} z_{\varsigma_j}^{-1}\Pi_{r,j}U_2  \ S_3^{k-1}(z)] \nonumber  \\
=&& (a)+(b). \nonumber
\end{eqnarray}


First, let us study $(b)$. As we will see, the cases $\varsigma_{\kappa_0} <1$ and
$\varsigma_{\kappa_0} =1$ are slightly different.

\vspace{0.5cm}\noindent $\circ$ {\it{ Case 1 :}} Assume that $\varsigma_{\kappa_0} <1$.
\\
 It follows that $\varsigma_j' =\varsigma_j$, and for $j=1, ..., \kappa_0$,
$$
\Pi_{r,j}  =  e^{i \pi \varsigma_j} \sum_{l=1}^{m_{\varsigma_j}}  < \cdot \ , u_j^{(l)}> u_j^{(l)}.
$$
We deduce that
\begin{eqnarray*}
(b) &=&  \sum_{j=1}^{\kappa_0} \sum_{l=1}^{m_{\varsigma_j}} \sum_{ \{\vec{\nu}\} \le 1}^{(1)}
        \ e^{i\pi\varsigma_j }\  z_{\varsigma_j}^{-1}
        \frac{d}{dz}z_{\vec{\nu}} \ \text{Tr }~[U_1 R_{\vec {\nu},0} V
        \ \langle U_2S_3^{k-1}(z) \cdot \ , u_j^{(l)} \rangle  u_j^{(l)}] \\
    &=&  \sum_{j=1}^{\kappa_0} \sum_{l=1}^{m_{\varsigma_j}} \sum_{ \{\vec{\nu}\} \le 1}^{(1)}
        \ e^{i\pi\varsigma_j }\  z_{\varsigma_j}^{-1}
        \frac{d}{dz}z_{\vec{\nu}} \ \text{Tr }~[U_1 R_{\vec {\nu},0} V
        \ \langle \cdot \ , (S_3^{k-1}(z))^* U_2 u_j^{(l)} \rangle  u_j^{(l)}] \\
    &=&  \sum_{j=1}^{\kappa_0} \sum_{l=1}^{m_{\varsigma_j}} \sum_{ \{\vec{\nu}\} \le 1}^{(1)}
        \ e^{i\pi\varsigma_j }\  z_{\varsigma_j}^{-1}
        \frac{d}{dz}z_{\vec{\nu}} \ \langle U_1 R_{\vec {\nu},0} V u_j^{(l)},
        (S_3^{k-1}(z))^* U_2 u_j^{(l)} \rangle \\
    &=& \sum_{j=1}^{\kappa_0} \sum_{l=1}^{m_{\varsigma_j}} \sum_{ \{\vec{\nu}\} \le 1}^{(1)}
        \ e^{i\pi\varsigma_j }\  z_{\varsigma_j}^{-1}
        \frac{d}{dz}z_{\vec{\nu}} \  \langle  R_{\vec {\nu},0} V u_j^{(l)},
         U_1 (S_3^{k-1}(z))^* U_2 u_j^{(l)} \rangle.
\end{eqnarray*}

\vspace{0.1cm}\noindent Using (\ref{Rnuzero}) with $\vec {\nu} = (\nu_1,\dots,\nu_p)$, we have
\begin{equation}
R_{\vec{\nu},0} V u_j^{(l)} = A G_{\nu_1,\delta_{\nu_1}} \pi_{\nu_1} W A G_{\nu_2,\delta_{\nu_2}} \pi_{\nu_2} W
                              \cdots A G_{\nu_p,\delta_{\nu_p}} \pi_{\nu_p} A^* V u_j^{(l)}.
\end{equation}
 Thus, Lemma \ref{reduction} implies
\begin{equation}\label{rappel}
R_{\vec{\nu},0} V u_j^{(l)} = -A G_{\nu_1,\delta_{\nu_1}} \pi_{\nu_1} W A G_{\nu_2,\delta_{\nu_2}} \pi_{\nu_2} W
                              \cdots A G_{\nu_p,\delta_{\nu_p}} \pi_{\nu_p} \widetilde{W} u_j^{(l)}.
\end{equation}
 It is easy to see that for $\vec {\nu} = (\nu_1,\dots,\nu_p)$ with $\sum \limits_{i=1}^p
\nu_i >\varsigma_j$, one has $z_{\varsigma_j}^{-1}\frac{d}{dz}z_{\vec {\nu}} = O(|z|^{-1+\epsilon})$ for some
$\epsilon >0$. It follows from (\ref{estimatetrace}) that it suffices to evaluate $(b)$ for $\vec {\nu} =
(\nu_1,\dots,\nu_p)$ with $\sum \limits_{i=1}^p \nu_i \leq \varsigma_j$,

\vspace{0.3cm} $\diamond$ Assume first $\exists i \in \{1, \cdots,p\}$ such that $\nu_i = \varsigma_j$.

\vspace{0.3cm}\noindent One deduces that $p=1$, i.e $\vec {\nu}=\nu_1$. Moreover, since
$\varsigma_{\kappa_0}<1$, one has $\nu_1 =\varsigma_j <1$. Using (\ref{Rnuzero1}), we obtain
\begin{equation}
R_{\vec{\nu},0} V u_j^{(l)} = -A  G_{\varsigma_j,0}\  \pi_{\varsigma_j} \widetilde{W} u_j^{(l)}.
\end{equation}
Recalling that
\begin{equation*}
\pi_{\varsigma_j}  =\sum_{l'=1}^{n_{\varsigma_j}} (\cdot \ ,\varphi_{\varsigma_j}^{(l')})\otimes
\varphi_{\varsigma_j}^{(l')},
\end{equation*}
and using (\ref{Gnuzero}) and (\ref{f17}), we deduce
\begin{eqnarray}
G_{\varsigma_j,0}\  \pi_{\varsigma_j} \widetilde{W} u_j^{(l)} &=& \sum_{l'=1}^{n_{\varsigma_j}}
\int_0^{+\infty}\ G_{\varsigma_j,0}(r,\tau) \ (\widetilde{W} u_j^{(l)}  \ ,\varphi_{\varsigma_j}^{(l')})
 \varphi_{\varsigma_j}^{(l')}\ \tau^{n-1} \ d\tau \nonumber \\
&=& c_{\varsigma_j} \ r^{-\frac{n-2}{2} +\varsigma_j} \ \sum_{l'=1}^{n_{\varsigma_j}} \ \langle
\widetilde{W}u_j^{(l)}, \mid x\mid^{-\frac{n-2}{2} +\varsigma_j} \varphi_{\varsigma_j}^{(l')} \rangle
\varphi_{\varsigma_j}^{(l')}.
\end{eqnarray}
It follows from the normalization condition of resonant states given in (\ref{fl12}) that
\begin{equation}
G_{\varsigma_j,0}\  \pi_{\varsigma_j} \widetilde{W} u_j^{(l)} = - e^{-i\pi\varsigma_j} \ \mid c_{\varsigma_j}
\mid^{\frac{1}{2}} \ r^{-\frac{n-2}{2} +\varsigma_j}\ \varphi_{\varsigma_j}^{(l)}.
\end{equation}
Thus, in this case, we have
\begin{equation}\label{conclusion1}
R_{\vec{\nu},0} V u_j^{(l)} =  e^{-i\pi\varsigma_j} \ \mid c_{\varsigma_j} \mid^{\frac{1}{2}} \ A \left( \mid
x\mid^{-\frac{n-2}{2} +\varsigma_j}\ \varphi_{\varsigma_j}^{(l)} \right).
\end{equation}

\vspace{0.3cm} $\diamond$ Assume now that $\forall i \in \{1, \cdots,p\}$, $\nu_i < \varsigma_j$.

\vspace{0.3cm}\noindent In particular $\nu_p <1$  and using (\ref{caract}), we obtain
\begin{eqnarray}
G_{\nu_p,0} \pi_{\nu_p} \widetilde{W} u_j^{(l)} &=& G_{\nu_p,0}\ \sum_{l'=1}^{n_{\nu_p}}
(\widetilde{W} u_j^{(l)} ,\varphi_{\nu_p}^{(l')})\otimes \varphi_{\nu_p}^{(l')} \\
&=& c_{\nu_p} \ r^{-\frac{n-2}{2} +\nu_p} \ \sum_{l'=1}^{n_{\nu_p}} \ \langle \widetilde{W}u_j^{(l)}, \mid
x\mid^{-\frac{n-2}{2} +\nu_p} \varphi_{\nu_p}^{(l')} \rangle
\varphi_{\nu_p}^{(l')} \\
&=&0.
\end{eqnarray}
Thus, in this case, using (\ref{rappel}), we have
\begin{equation}\label{conclusion2}
R_{\vec{\nu},0} V u_j^{(l)} = 0.
\end{equation}

As a conclusion of (\ref{conclusion1}) and (\ref{conclusion2}), we obtain
\begin{equation}
(b) \approx \sum_{j=1}^{\kappa_0} \sum_{l=1}^{m_{\varsigma_j}}
        \ \mid c_{\varsigma_j }\mid^{\frac{1}{2}} \  z_{\varsigma_j}^{-1}
        \frac{d}{dz} (z_{\varsigma_j})\   \langle A ( \mid x\mid^{-\frac{n-2}{2} +\varsigma_j}
        \varphi_{\varsigma_j}^{(l)} ),
        U_1 (S_3^{k-1}(z))^* U_2 u_j^{(l)} \rangle.
\end{equation}

 By Proposition \ref{lrapp03}, $(S_3^{k-1}(z))^* = (U_2 R_0 U_1)^{k-1}
+O(|z|^{\epsilon})$ in $L^2(\mathbb R^n)$. Moreover, since $\varsigma_j <1, \  z_{\varsigma_j}^{-1} \frac{d}{dz}
(z_{\varsigma_j}) = \frac{\varsigma_j}{z}$. Thus,

\begin{eqnarray*}
(b) &\approx&  \frac{1}{z} \sum_{j=1}^{\kappa_0} \sum_{l=1}^{m_{\varsigma_j}}
        \ \mid c_{\varsigma_j }\mid^{\frac{1}{2}} \  \varsigma_j
         \ \langle A ( \mid x\mid^{-\frac{n-2}{2} +\varsigma_j}
        \varphi_{\varsigma_j}^{(l)}) ,
        U_1 (U_2 R_0 U_1)^{k-1} U_2 u_j^{(l)} \rangle \\
    &\approx&  \frac{1}{z} \sum_{j=1}^{\kappa_0} \sum_{l=1}^{m_{\varsigma_j}}
        \ \mid c_{\varsigma_j }\mid^{\frac{1}{2}} \  \varsigma_j
         \ \langle A ( \mid x\mid^{-\frac{n-2}{2} +\varsigma_j}
        \varphi_{\varsigma_j}^{(l)} ),
        V (R_0 V)^{k-1} u_j^{(l)} \rangle \\
    &\approx&  \frac{(-1)^{k-1}}{z} \sum_{j=1}^{\kappa_0} \sum_{l=1}^{m_{\varsigma_j}}
        \ \mid c_{\varsigma_j }\mid^{\frac{1}{2}} \  \varsigma_j
        \  \langle A ( \mid x\mid^{-\frac{n-2}{2} +\varsigma_j}
        \varphi_{\varsigma_j}^{(l)} ),
        V u_j^{(l)} \rangle \\
    &\approx&  \frac{(-1)^{k}}{z} \sum_{j=1}^{\kappa_0} \sum_{l=1}^{m_{\varsigma_j}}
        \ \mid c_{\varsigma_j }\mid^{\frac{1}{2}} \  \varsigma_j
         \ \langle  \mid x\mid^{-\frac{n-2}{2} +\varsigma_j}
        \varphi_{\varsigma_j}^{(l)} ,
         \widetilde{W} u_j^{(l)} \rangle ,
\end{eqnarray*}
where we have used Lemma \ref{reduction} in the two last equations. Using again the normalization condition of
resonant states given in (\ref{fl12}), we obtain :
\begin{equation}
(b) \approx \frac{(-1)^k}{z} \sum_{j=1}^{\kappa_0} \sum_{l=1}^{m_{\varsigma_j}}
        \  \varsigma_j \ = \ \frac{(-1)^k}{z} \sum_{j=1}^{\kappa_0} m_{\varsigma_j}\varsigma_j.
\end{equation}


\vspace{0.5cm}\noindent $\circ$ {\it{ Case 2 :}} assume that $\varsigma_{\kappa_0} =1$.

\vspace{0.3cm}\noindent  In this case, $\varsigma_j'= \varsigma_j$ for $j=1, ..., \kappa_0-1$ and
$\varsigma_{\kappa_0}'=0$. So we can write,
\begin{equation}\label{be}
(b) =  \sum_{j=1}^{\kappa_0-1} \sum_{l=1}^{m_{\varsigma_j}} \sum_{ \{\vec{\nu}\} \le 1}^{(1)}
        \ e^{i\pi\varsigma_j }\  z_{\varsigma_j}^{-1}
        \frac{d}{dz}z_{\vec{\nu}} \ \langle  R_{\vec {\nu},0} V u_j^{(l)},
         U_1 (S_3^{k-1}(z))^* U_2 u_j^{(l)} \rangle +(b'),
\end{equation}
where we set
\begin{equation}
(b') =  \sum_{l=1}^{m_1} \sum_{ \{\vec{\nu}\} \le 1}^{(1)}
        \ \frac{1}{z\ln z} \frac{d}{dz}z_{\vec{\nu}} \ \langle  R_{\vec {\nu},0} V u_{\kappa_0}^{(l)},
         U_1 (S_3^{k-1}(z))^* U_2 u_{\kappa_0}^{(l)} \rangle.
\end{equation}
The first part of $(b)$ in (\ref{be}) can be calculated exactly as in the case 1. Thus, one has
\begin{equation}
(b) \approx \frac{(-1)^k}{z} \sum_{j=1}^{\kappa_0-1} m_{\varsigma_j}\varsigma_j +(b').
\end{equation}
Now, let us study $(b')$. Using the same approach, an easy calculus gives
\begin{equation}
(b') \approx (-1)^{k-1} \ \sum_{l=1}^{m_1} \frac{1}{z\ln z} \frac{d}{dz}(z\ln z) \langle R_{1,0} V
u_{\kappa_0}^{(l)}, V u_{\kappa_0}^{(l)} \rangle.
\end{equation}
Using (\ref{Rnuzero1}), we have
\begin{equation}
R_{1,0} V u_{\kappa_0}^{(l)} =A G_{1,1} \pi_1 A^* V u_{\kappa_0}^{(l)} = -A G_{1,1} \pi_1 \widetilde{W}
u_{\kappa_0}^{(l)}.
\end{equation}
Recalling that
\begin{equation*}
\pi_{1}  =\sum_{l'=1}^{n_{1}} (\cdot \ ,\varphi_{1}^{(l')})\otimes \varphi_{1}^{(l')},
\end{equation*}
we deduce
\begin{eqnarray*}
G_{1,1} \pi_1 \widetilde{W} u_{\kappa_0}^{(l)} &=&\sum_{l'=1}^{n_{1}} \int_0^{+\infty} G_{1,1}(r, \tau)
( \widetilde{W} u_{\kappa_0}^{(l)}\ ,\varphi_{1}^{(l')}) \varphi_{1}^{(l')} \ \tau^{n-1} d\tau \\
& =& c_1  \ r^{-\frac{n-2}{2} +1 } \sum_{l'=1}^{n_{1}} \ \langle \widetilde{W} u_{\kappa_0}^{(l)}\ , \mid
x\mid^{-\frac{n-2}{2} +1 } \varphi_{1}^{(l')}  \rangle \varphi_{1}^{(l')},
\end{eqnarray*}
where we have used (\ref{Gunun}) and (\ref{f17}) in the last equation. As previously, it follows from the
normalization condition of resonant states that
\begin{equation}
G_{1,1} \pi_1 \widetilde{W} u_{\kappa_0}^{(l)} =  -\mid c_1\mid^{\frac{1}{2}} r^{-\frac{n-2}{2} +1 }
\varphi_{1}^{(l)}.
\end{equation}
Thus,
\begin{equation}
R_{1,0} V u_{\kappa_0}^{(l)} = \mid c_1\mid^{\frac{1}{2}} A\left( \mid x\mid^{-\frac{n-2}{2} +1 }
\varphi_{1}^{(l)} \right).
\end{equation}
It follows that
\begin{eqnarray*}
(b') &\approx& (-1)^{k-1} \ \sum_{l=1}^{m_1} \frac{1}{z\ln z} \frac{d}{dz}(z\ln z) \ \mid c_1\mid^{\frac{1}{2}}
              \langle A\left( \mid x\mid^{-\frac{n-2}{2} +1 } \varphi_{1}^{(l)} \right)\ , V u_{\kappa_0}^{(l)} \rangle \\
     &\approx& (-1)^k \ \sum_{l=1}^{m_1} \frac{1}{z\ln z} \frac{d}{dz}(z\ln z) \ \mid c_1\mid^{\frac{1}{2}}
                \langle \mid x\mid^{-\frac{n-2}{2} +1 } \varphi_{1}^{(l)}\ , \widetilde{W} u_{\kappa_0}^{(l)} \rangle \\
     &\approx&  (-1)^k m_1 \left(\frac{1}{z} + \frac{1}{z\ln z} \right),
\end{eqnarray*}
where we have used again the normalization condition of resonant states. Thus, in this case, we have obtained
\begin{equation}
(b) \approx \frac{(-1)^k}{z} \sum_{j=1}^{\kappa_0-1} m_{\varsigma_j}\varsigma_j + (-1)^k m_1 \left(\frac{1}{z} +
\frac{1}{z\ln z} \right).
\end{equation}

 As a conclusion, we have proved in all cases
\begin{equation}
(b) \approx \frac{(-1)^k}{z} \sum_{j=1}^{\kappa_0} m_{\varsigma_j}\varsigma_j + \delta_{\varsigma_{\kappa_0}, 1}
\ (-1)^k \frac{m_1}{z\ln z}.
\end{equation}

It remains to study $(a)$. Using the same strategy as for $(b)$, we obtain easily
\begin{equation}
(a) \approx (-1)^{k-1} \sum_{j=1}^{k_0} \sum_{l=1}^{m_{\varsigma_j}} e^{i\pi\varsigma_j'} \ z_{\varsigma_j}^{-1}
\langle R_1 V u_j^{(l)} \ , V u_j^{(l)} \rangle
\end{equation}
If $\varsigma_{\kappa_0} <1$, it is clear that $(a)$ is negligible. If $\varsigma_{\kappa_0} =1$, we obtain
\begin{equation}
(a) \approx \frac{(-1)^{k-1}}{z\ln z} \sum_{l=1}^{m_1} \langle R_1 V u_{\kappa_0}^{(l)} \ , V u_{\kappa_0}^{(l)}
\rangle,
\end{equation}
and the lemma is proved.
\end{proof}

For $T_{32}(z)$, we have the following

\begin{lemma}\label{T3}
\begin{equation}
T_{32}(z) \approx 0.
\end{equation}
\end{lemma}

\begin{proof}
Using (\ref{sunun}) and (\ref{sdeuxdeux}), we see that it suffices to prove that the following term is
negligible :
\begin{equation*}
(c)= \sum_{j=1}^{\kappa_0} \sum_{ \{\vec{\nu}\} \le 1}^{(1)} \sum_{\alpha, \beta,\vec{\nu_1},l}^{+, 1}
z_{\varsigma_j}^{-1}z_{\vec{\nu}_1} z^{|\beta|}(z_{\vec{\varsigma}})^{-\alpha -\beta}
  z^{l} \frac{d}{dz}z_{\vec{\nu}}
  \text{Tr} ~[U_1 R_{\vec \nu ,0} V T_{r;\vec{\nu}_1, \alpha, \beta, l, j}U_2 S_3^{k-1}(z)].
\end{equation*}
First, let us remark that if ${\displaystyle{\sum_{i=1}^p \nu_i \geq \varsigma_j}}$, the same argument as
(\ref{subtle}) implies that
\begin{equation}
z_{\varsigma_j}^{-1}z_{\vec{\nu}_1} z^{|\beta|}(z_{\vec{\varsigma}})^{-\alpha -\beta}
  z^{l} \frac{d}{dz}z_{\vec{\nu}} \approx 0 .
\end{equation}

\vspace{0.2cm}\noindent So, it remains to estimate $(c)$ when $\vec{\nu} =(\nu_1, ..., \nu_p)$ satisfies
${\displaystyle{\sum_{i=1}^p \nu_i < \varsigma_j}}$. From the proof of Theorem \ref{th1}, we know that
$T_{r;\vec{\nu}_1, \alpha, \beta, l, j}$ is a linear combination of
\begin{equation}
\Pi_{r,j}B_{r;\vec{\nu}_1, \alpha, \beta, l, j}~\quad \text { and}~\quad  A_{r;\vec{\nu}_1, \alpha, \beta, l, j}
\Pi_{r,j} \widetilde W G_{\mu,\delta_\mu} \pi_\mu  B_{r;\vec{\nu}_1, \alpha, \beta, l, j},
\end{equation}
where $A_{r;\vec{\nu}_1, \alpha, \beta, l, j}$ is a bounded operator in $\mathcal L (1,-s;1,-s),~s>3$ and
$B_{r;\vec{\nu}_1, \alpha, \beta, l, j}$ is a bounded operator in $\mathcal L (-1,s;-1,s)$ for $s>3$.

\vspace{0.5cm}\noindent $\circ$ Let us study the contribution coming from  $\Pi_{r,j}B_{r;\vec{\nu}_1, \alpha,
\beta, l, j}$.

\vspace{0.3cm}\noindent To simplify the notation, we write $B= B_{r;\vec{\nu}_1, \alpha, \beta, l, j}$. In this
case, as previously, we have
\begin{equation}
\text{Tr} ~[U_1 R_{\vec \nu ,0} V \Pi_{r,j} B U_2 S_3^{k-1}(z)] = \sum_{l=1}^{m_{\varsigma_j}} e^{i\pi
\varsigma_j'} \langle R_{\vec \nu ,0} V u_j^{(l)} , U_1 (S_3^{k-1}(z))^* U_2 B^* u_j^{(l)} \rangle .
\end{equation}
Since $\nu_p < \varsigma_j$, as in (\ref{conclusion2}), we obtain $R_{\vec \nu ,0} V u_j^{(l)}=0$.

\vspace{0.5cm}\noindent $\circ$ Now, let us study the contribution coming from $ A_{r;\vec{\nu}_1, \alpha,
\beta, l, j} \Pi_{r,j} \widetilde W G_{\mu,\delta_\mu} \pi_\mu  B_{r;\vec{\nu}_1, \alpha, \beta, l, j}$.

\vspace{0.3cm}\noindent We remark that, if $\mu < \varsigma_j$, the same argument as before and (\ref{caract})
imply $\Pi_{r,j} \widetilde W G_{\mu,\delta_\mu} \pi_\mu=0$. Thus, it suffices to study the case $\mu \geq
\varsigma_j$. To do this, we refer to the proof of Theorem 4.6  in \cite{Wang01}. The term $G_{\mu,\delta_\mu}
\pi_\mu  B_{r;\vec{\nu}_1, \alpha, \beta, l, j}$ comes from the expansion of $L_1(z) \widetilde W
(D_0+D_1(z))\widetilde R_0(z)$ with $D_0 \in \vL(1,-s; 1,-s)$ for any $s>1$ and $D_0|_\vN =0$ and
\begin{eqnarray*}
L_1 (z) & = &  \sum_{\mu\in \sigma_1} z_\mu G_{\mu, \delta_\mu} \pi_\mu \\
D_1(z) & = &   z D_1 + \sum_{\mu\in \sigma_1} z_\mu D_{\mu, 0}
\end{eqnarray*}
for some  $D_1$ and $D_{\mu, 0}$ in $\vL(1,-s; 1, -s)$ with $s>3$, so the coefficient in front of
$G_{\mu,\delta_\mu} \pi_\mu $ $B_{r;\vec{\nu}_1, \alpha, \beta,l,j}$ is $O(z^\mu)$. In the same way, the term
$A_{r;\vec{\nu}_1, \alpha, \beta, l, j} \Pi_{r,j}\widetilde W$ comes from the the expansion of
\[
(1-(D_0 + D_1(z))L_1(z) \widetilde W)I_r(z) Q_r - \Pi_r(z)  \widetilde W Q_r,
\]
 where $I_r(z)$ is of the form
\[
I_r(z) = (1 +  O(|z|^\epsilon)) \Pi_r(z)  \widetilde W,  \quad \Pi_r(z) = \sum_{j=1}^{\kappa_0}
\frac{1}{z_{\varsigma_j}} \Pi_{r,j}
\]
with $\Pi_{r,j}$ defined in Theorem \ref{th1} (see p. 1931 of \cite{Wang01}),  so the coefficient in front of
$A_{r;\vec{\nu}_1, \alpha, \beta, l, j} \Pi_{r,j}\widetilde W$ is $O(|z|^{-\varsigma_j+\epsilon})$. It follows
that the coefficient of $T_{r;\vec{\nu}_1, \alpha, \beta, l, j}$ in Theorem \ref{th1} is $O(|z|^{\epsilon  + \mu
- \varsigma_j}) = O(|z|^{\epsilon})$. As a conclusion, $(c)= O(|z|^{-1+\epsilon})$ and the lemma is proved.
\end{proof}


\vspace{0.5cm}\noindent $\bullet$ {\bf{Step 2 : }} Assume that $0$  is  an eigenvalue of $P$.

\vspace{0.5cm}\noindent We follow the same strategy. $S_1(z), S_2(z),S_3(z)$, $S_{11}(z), S_{12}(z)$ are the
same as before. Using Theorem \ref{th1} with $N=1$, $S_2(z)$ can be decomposed as
\begin{equation}
S_2(z) = U_1 R(z) U_2 = \sum_{j=1}^5 S_{2j}(z),
\end{equation}
with
\begin{eqnarray*}
S_{21}(z) &=& U_1 T_r (z) U_2 \\
S_{22}(z) &=& - \frac{1}{z} \ U_1 \Pi_0 U_2 \\
S_{23}(z) &=& \sum_{ j ,\ \{\vec{\nu}\}+j  \le 0}^{(-)} z_{\vec{\nu}} z^{j} U_1 T_{e;{\vec{\nu}};  j}U_2 \\
S_{24}(z) &=& U_1 T_{er} (z) U_2 \\
S_{25}(z) &=& O(1).
\end{eqnarray*}

\vspace{0.2cm}\noindent Thus, it follows that
\begin{equation}
T_3 (z) = (-1)^{k+1} \text{Tr} ~[ S_1 (z) S_2 (z) S_3^{k-1}(z)] = \sum_{j=1}^5 T_{3j} (z),
\end{equation}
where for $j=1, ..., 4$,
\begin{equation}
T_{3j}(z) =  (-1)^{k+1} \text{Tr} ~[ S_{11} (z) S_{2j} (z) S_3^{k-1}(z)],
\end{equation}
and
\begin{equation}
T_{35}(z) = (-1)^{k+1} \text{Tr} ~[ (S_{11} (z) S_{25} (z) + S_{12}(z) S_2 (z) ) S_3^{k-1}(z)].
\end{equation}

\vspace{0.2cm}\noindent It follows from (\ref{estimatetrace}), (\ref{sundeux}) and (\ref{sunun}) that $T_{35}
(z) \approx 0$. Now, let us establish the following lemma which will be useful to estimate the other terms.

\begin{lemma}\label{utile}
For ${\vec \nu} = (\nu_1, ..., \nu_p) \in (\sigma_1)^p$, one has :
\begin{equation}
R_{\vec \nu ,0} V \Pi_0 =0 \ \ ,\ \ \Pi_0 V R_{\vec \nu ,0} =0.
\end{equation}
\end{lemma}

\begin{proof}
We only prove the first assertion since the other one is similar. Let $\Psi_j$, $j=1, ..., \vN_0$, be an orthonormal basis of  the eigenspace of $P$ with eigenvalue $0$. As previously, one has
\begin{equation}
R_{\vec{\nu},0} V \Phi_j = -A G_{\nu_1,\delta_{\nu_1}} \pi_{\nu_1} W A G_{\nu_2,\delta_{\nu_2}} \pi_{\nu_2} W
                              \cdots A G_{\nu_p,\delta_{\nu_p}} \pi_{\nu_p} \widetilde W \Phi_j,
\end{equation}
and
\begin{eqnarray*}
G_{\nu_p,\delta_{\nu_p}} \pi_{\nu_p} \widetilde W \Phi_j &=& G_{\nu_p,\delta_{\nu_p}} \sum_{l=1}^{n_{\nu_p}}
(\widetilde W \Phi_j , \varphi_{\nu_p}^{(l)} ) \otimes \varphi_{\nu_p}^{(l)} ,\\
&=& c_{\nu_p} \  \langle  \widetilde W  \Phi_j, \mid x\mid^{- \frac{n-2}{2} + \nu_p}
\varphi_{\nu_p}^{(l)} )  \rangle r^{- \frac{n-2}{2} + \nu_p} \otimes\varphi_{\nu_p}^{(l)} , \\
&=& 0,
\end{eqnarray*}
where we have used (\ref{rstates}) in the last equation, with $u = \Phi_j \in L^2 (\mathbb{R}^n )$.
\end{proof}

\vspace{0.3cm}\noindent First, let us study $T_{32}(z)$. We have the following result :

\begin{lemma}\label{t32}
\begin{equation}
T_{32}(z) \approx - \frac{\vN_0}{z}.
\end{equation}
\end{lemma}

\begin{proof}
We can decompose $T_{32}(z)$ as
\begin{equation}
T_{32}(z)= I_1(z)+I_2(z)
\end{equation}
with
\begin{eqnarray*}
I_1(z) &=& \frac{(-1)^k}{z} \sum_{ \{\vec {\nu}\} \le 1}^{(1)} \frac{d}{dz} z_{\vec {\nu}}
\text{Tr }~[U_1R_{\vec{\nu},0}V \Pi_0U_2 S_3^{k-1} (z)] , \\
I_2(z) &=& \frac{(-1)^k}{z} \text{Tr }~[U_1 R_1 V\Pi_0 U_2 S_3^{k-1} (z)].
\end{eqnarray*}

\vspace{0.2cm}\noindent By Lemma \ref{utile}, $I_1 (z)=0$. Now, let us study $I_2(z)$. As previously, we have
\begin{eqnarray*}
\text{Tr }~[U_1 R_1 V\Pi_0 U_2 S_3^{k-1} (z)] &=& \sum_{j=1}^{\vN_0} \langle R_1 V \Phi_j, U_1
(S_3^{k-1} (z))^* U_2 \Phi_j \rangle \\
                                              &=& (-1)^{k-1} \ \sum_{j=1}^{\vN_0} \langle R_1 V \Phi_j, V \Phi_j
                                              \rangle
                                              + O(\mid z\mid^{\epsilon}).
\end{eqnarray*}
Recall that $R_1 = A F_1 A^*$ and $A^* V\Phi_j = -\widetilde W \Phi_j$. Thus,
\begin{equation}
\text{Tr }~[U_1 R_1 V\Pi_0 U_2 S_3^{k-1} (z)] = \sum_{j=1}^{\vN_0} \langle F_1 \widetilde W \Phi_j, \widetilde W
\Phi_j \rangle + + O(\mid z\mid^{\epsilon}).
\end{equation}
The lemma comes from the following result (see \cite{JensenKato}, \cite{Wang03}) :
\begin{equation}
\langle F_1 \widetilde W \Phi_i, \widetilde W \Phi_j \rangle = \delta_{ij}.
\end{equation}
\end{proof}

\vspace{0.2cm}\noindent
Now, we prove :

\begin{lemma}\label{t33}
\begin{equation*}
T_{33}(z) \approx 0.
\end{equation*}
\end{lemma}

\begin{proof}
Since $S_{23}(z) \approx 0$, it is easy to see that
\begin{equation*}
T_{33}(z) \approx (-1)^{k+1} \sum_{ \{\vec {\nu}\} \le 1}^{(1)} \ \sum_{ j, \{\vec{\nu_1}\}+j  \le 0}^{(-)}
\frac{d}{dz} z_{\vec {\nu}} z_{\vec{\nu_1}} z^{j} \text{Tr }~[U_1R_{\vec{\nu},0}V T_{e;{\vec{\nu_1}};  j}U_2
S_3^{k-1} (z)]
\end{equation*}
We note that $T_{e;{\vec{\nu}}; j} = \Pi_0 A_{e;{\vec{\nu}};  j} $ with $A_{e;{\vec{\nu}};  j}$ be a bounded
operator in $\mathcal L (-1,s;-1,s)$, $s>3$. So the result comes from Lemma \ref{utile}.
\end{proof}

\vspace{0.2cm}\noindent
We have the following estimate  for $T_{32}(z)$ :

\begin{lemma}\label{t34}
\begin{equation*}
T_{34}(z) \approx  \frac{\delta_{\varsigma_{\kappa_0}, 1}}{z\ln z} \ \text{Tr }~[ R_1 V (\Pi_0 \widetilde W Q_eF_1
\widetilde W \Pi_{r,\kappa_0} + \Pi_{r,\kappa_0} \widetilde W Q_r F_1 \widetilde W \Pi_0) V].
\end{equation*}
\end{lemma}

\begin{proof}
We have
\begin{equation}
T_{34}(z) = (-1)^{k+1} \text{Tr }~[(U_1 R_1 U_2 + \sum_{ \{\vec {\nu}\} \le 1}^{(1)} \frac{d}{dz} z_{\vec {\nu}}
U_1R_{\vec{\nu},0} U_2) (U_1 T_{er} (z) U_2) S_3^{k-1} (z)].
\end{equation}
First, we assume that $\varsigma_{\kappa_0} <1$. In this case, $U_1 T_{er} (z) U_2 \approx 0$ and it
follows that
\begin{equation}
T_{34}(z) \approx J_1(z) +J_2(z)+J_3(z),
\end{equation}
where
\begin{eqnarray*}
J_1(z)&=& (-1)^{k+1} \sum_{j=1}^{\kappa_0} \sum_{\{\vec {\nu}\} \le 1}^{(1)} \frac{d}{dz} z_{\vec {\nu}}
z_{\varsigma_j}^{-1}
\text{Tr}~[U_1 R_{\vec {\nu},0}V   \Pi_0 \widetilde W Q_eF_1 \widetilde W \Pi_{r,j} U_2 S_3^{k-1} (z)].\\
J_2(z)&=& (-1)^{k+1} \sum_{j=1}^{\kappa_0} \sum_{\{\vec {\nu}\} \le 1}^{(1)} \frac{d}{dz} z_{\vec {\nu}}
z_{\varsigma_j}^{-1}
\text{Tr}~[U_1 R_{\vec {\nu},0}V \Pi_{r,j} \widetilde W Q_r F_1 \widetilde W \Pi_0 U_2 S_3^{k-1} (z)].\\
J_3(z)&=& (-1)^{k+1} \sum_{j=1}^{\kappa_0} \sum_{\{\vec {\nu}\} \le 1}^{(1)}  \sum^{+,1}_{\alpha,
\beta,\vec{\nu}_1,l} \frac{d}{dz} z_{\vec {\nu}} z_{\varsigma_j}^{-1} z_{\vec{\nu_1}} z^{|\beta|}
(z_{\vec{\varsigma}})^{-\alpha -\beta} z^{l} \\
&& \hspace{4cm} \text{Tr}~[U_1 R_{\vec {\nu},0}V T_{er; \vec{\nu}_1, \alpha, \beta, l, j}U_2  S_3^{k-1} (z)].
\end{eqnarray*}

 By Lemma \ref{utile}, $J_1 (z)=0$. Now, let us study $J_2(z)$. As previously, the
leading contribution is obtained when $\vec {\nu} = \varsigma_j$. So we have :
\begin{eqnarray*}
J_2 (z) &\approx & \frac{(-1)^{k+1}}{z} \sum_{j=1}^{\kappa_0} \varsigma_j
\text{Tr}~[U_1 R_{\varsigma_j,0}V \Pi_{r,j} \widetilde W Q_r F_1 \widetilde W \Pi_0 U_2 S_3^{k-1} (z)] \\
&\approx& \frac{(-1)^{k+1}}{z} \sum_{j=1}^{\kappa_0} \varsigma_j \text{Tr}~[V \Pi_{r,j} \widetilde W Q_r F_1
\widetilde W \Pi_0 (V R_0)^{k-1} V R_{\varsigma_j,0}],
\end{eqnarray*}
where we have used the cyclicity of the trace. Moreover, it is easy to see that Lemma \ref{reduction} implies
\begin{equation} \label{pizero}
\Pi_0 (V R_0)^{k-1} = (-1)^{k-1} \Pi_0 \ \ ,\ \ \Pi_{r,j} (V R_0)^{k-1} = (-1)^{k-1} \Pi_{r,j}.
\end{equation}
Using the first assertion of (\ref{pizero}) and Lemma \ref{utile}, we deduce
\begin{eqnarray*}
J_2 (z) &\approx& \frac{1} {z} \sum_{j=1}^{\kappa_0} \varsigma_j
\text{Tr}~[V \Pi_{r,j} \widetilde W Q_r F_1 \widetilde W \Pi_0  V R_{\varsigma_j,0}] \\
&\approx& 0.
\end{eqnarray*}
Finally, using the same arguments as in Lemma \ref{T3}, we can show that $J_3 (z) \approx 0$.

\vspace{0.5cm}\noindent Now, let assume that $\varsigma_{k_0} =1$. We follow the same strategy as in the case
$\varsigma_{\kappa_0} <1$. It is easy to see that, in this case,
\begin{eqnarray*}
T_{34}(z) &\approx&  \frac{(-1)^{k+1}}{z\ln z} \ \text{Tr }~[ U_1 R_1 V (\Pi_0 \widetilde W Q_eF_1 \widetilde W
\Pi_{r,\kappa_0}
+ \Pi_{r,\kappa_0} \widetilde W Q_r F_1 \widetilde W \Pi_0) U_2 S_3^{k-1} (z)] \\
&\approx& \frac{(-1)^{k+1}}{z\ln z} \ \text{Tr }~[  R_1 V (\Pi_0 \widetilde W Q_eF_1 \widetilde W \Pi_{r,\kappa_0} +
\Pi_{r,\kappa_0} \widetilde W Q_r F_1 \widetilde W \Pi_0) (VR_0)^{k-1} V].
\end{eqnarray*}
Then, the lemma comes from (\ref{pizero}).
\end{proof}

\vspace{0.3cm}\noindent
Finally, $T_{31}(z)$ has been computed in the case where $0$ is not an eigenvalue of $P$.

\vspace{0.5cm}\noindent {\it{End of the proof of Theorem \ref{rtth01}.}}

\vspace{0.2cm}\noindent
As a conclusion, it follows from the above discussion that in all the cases, one has
\begin{equation} \label{Tz}
T(z) \approx  - \frac{1}{z} \left( \sum_{j=1}^{\kappa_0} \varsigma_j m_{\varsigma_j} + \mathcal{N}_0 \right)
+ \frac {C}{z\ln z},
\end{equation}
for some constant $C$ and for all $z$ near $0$ with $\Im z > 0$.  Using the relation  $T(z) =
\overline{T({\overline z} )}$ for $\Im z<0$, we deduce that $C$ is real and  (\ref{Tz}) holds for  $z$ near $0$ and $\Im z \neq
0$. Recall that the generalized residue is defined as
\begin{equation*}
J_0 = - \frac{1}{2\pi i} \lim_{\delta \rightarrow 0} \lim_{ \epsilon \rightarrow 0} \int_{\gamma (\delta,
\epsilon)} T(z) ~dz
\end{equation*}
if the limit exists, where $\gamma (\delta,\epsilon)$ is positively oriented.  We obtain from (\ref{Tz}) that
\[
 - \frac{1}{2\pi i}  \int_{\gamma (\delta,
\epsilon)} T(z) ~dz = \sum_{j=1}^{\kappa_0} \varsigma_j m_{\varsigma_j} + \mathcal{N}_0  + O(\frac \epsilon \delta) + O(\frac{1}{|\log \delta|}),
\]
uniformly in $0 <\epsilon << \delta$. Taking first the limit $\epsilon \to 0$, then the limit $\delta \to 0$, one  derives $ J_0 = \sum_{j=1}^{\kappa_0} \varsigma_j m_{\varsigma_j} + \mathcal{N}_0 $. This proves
Theorem \ref{rtth01}.
\end{proof}

\section{Proof of Levinson's theorem}

In this section, we  shall use  Theorem \ref{rfth01} to prove a Levinson's theorem for potentials with critical
decay. First, we have to verify that the conditions (\ref{ass1}), and $(2.4)-(2.9)$ hold. In the last section,
we have seen that (\ref{ass4}) is satisfied if $\rho_0 > max(6, n+2)$.

\vspace{0.3cm}
\par\noindent

Under the assumption (\ref{positive}), the Hamiltonian $P_0$ is positive, and it is well known that $P$ and
$P_0$ have no embedded positive eigenvalues, and the spectrum of $P$ and $P_0$ is purely absolutely continuous
on $]0, +\infty[$.

\vspace{0.3cm}
\par\noindent
Using Theorem \ref{th1} with $N=0$, one has for $s>2$, $z$ small with $\Im z>0$,
\begin{equation} \label{nonaccumul}
\mid \mid \w{x}^{-s} R(z) \w{x}^{-s} \mid \mid \leq \frac{C}{\mid z\mid}
\end{equation}
We deduce that the negative eigenvalues of $P$ can not accumulate at $0$.
Indeed, let $\varphi$ be an eigenfunction of $P$ associated with an eigenvalue $\lambda \in [-\delta, 0[$,
$\delta$ small. It is well known that that $\varphi$ decays exponentially. So, using
(\ref{nonaccumul}) with $z = \lambda +i \epsilon$, we obtain
\begin{equation}
\frac{1}{\epsilon} \mid \mid \w{x}^{-s} \varphi \mid \mid \leq \frac{C}{\mid \lambda +i\epsilon \mid}
\mid \mid \w{x}^{s} \varphi \mid \mid,
\end{equation}
which gives the contradiction when $\epsilon \rightarrow 0$.

\vspace{0.3cm}
\par\noindent
At least, for $\rho_0>n$ and  $k > \frac{n}{2}$, it
is well known that (\ref{ass1}) holds, (see for example, \cite{Robert01}, Theorems 1.1 -1.2).


\vspace{0.3cm}
\par\noindent
Now, let us prove the following elementary lemma in order to verify (\ref{ass3}) :

\begin{lemma}\label{ltlm01}
Assume $\rho_0 >n$. Then, for every $f\in C_0^\infty(\mathbb R)$, there exists $C>0$, such that :
$$
|~\text{Tr} ~[(R(z)-R_0(z))f(P)]~| \le  \frac{C}{|z|^2},
$$
uniformly in $z\in \bC$ with $|z|$ large and $z\notin \sigma(P)$.
\end{lemma}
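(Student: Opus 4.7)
\noindent\emph{Plan of proof.} The strategy combines the resolvent identity with an algebraic expansion of $R(z)f(P)$ adapted to the compact support of $f$. Starting from $(R(z)-R_0(z))f(P) = -R_0(z)VR(z)f(P)$ and writing $R(z)f(P) = g_z(P)$ with $g_z(\lambda) = f(\lambda)/(\lambda-z)$, for $|z|$ larger than twice $\max|\mathrm{supp}\,f|$ the function $g_z$ is smooth, compactly supported, and $\|g_z\|_\infty = O(1/|z|)$. Iterating the elementary identity $(\lambda-z)^{-1} = -z^{-1} + \lambda z^{-1}(\lambda-z)^{-1}$ twice yields
\[
R(z)f(P) = -\frac{f(P)}{z} - \frac{f_1(P)}{z^2} + \frac{R(z) f_2(P)}{z^2}, \qquad f_k(\lambda) := \lambda^k f(\lambda) \in C_0^\infty(\bR),
\]
so that
\[
\text{Tr}\bigl[(R(z)-R_0(z))f(P)\bigr] = \frac{1}{z}\text{Tr}[R_0(z) V f(P)] + \frac{1}{z^2}\text{Tr}[R_0(z) V f_1(P)] - \frac{1}{z^2}\text{Tr}[R_0(z) V R(z) f_2(P)].
\]

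It therefore suffices to establish, uniformly in $|z|$ large, that (a) $\text{Tr}[R_0(z) V h(P)] = O(1/|z|)$ and (b) $\text{Tr}[R_0(z) V R(z) h(P)] = O(1)$ for every $h \in C_0^\infty(\bR)$. For (a), split $h(P) = h(P_0) + (h(P)-h(P_0))$ and use cyclicity of the trace together with the commutation $R_0(z) h(P_0) = k_z(P_0)$, where $k_z(\lambda) := h(\lambda)/(\lambda-z)$ satisfies $\|k_z\|_\infty = O(1/|z|)$. The trace $\text{Tr}[k_z(P_0) V]$ is then controlled by a Kato--Seiler--Simon factorization: writing $V = V_1 V_2$ with $V_j$ bounded and $V_2 \in L^2(\bR^n)$ (since $\rho_0 > n$ gives $|V|^{1/2} \in L^2$), and decomposing $k_z = k_{z,1} k_{z,2}$ with $k_{z,j}$ compactly supported of sup-norm $O(|z|^{-1/2})$, the operators $V_2 k_{z,1}(P_0)$ and $k_{z,2}(P_0) V_1$ are Hilbert--Schmidt with $\mathscr{S}_2$-norms $O(|z|^{-1/2})$, so that their product lies in $\mathscr{S}_1$ with trace norm $O(1/|z|)$. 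The remaining contribution from $h(P)-h(P_0) \in \mathscr{S}_1$ (via the extension of hypothesis (\ref{ass1})) is handled by an analogous factorization. The bound (b) is obtained by one further application of the algebraic identity to $R(z)h(P)$, reducing the matter to estimates of the same type.

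The main obstacle is that $\|R_0(z)\|$ is only controlled by $1/\mathrm{dist}(z,\sigma(P_0))$, which may be large when $z$ is close to the positive real axis, so a naive operator-norm estimate of products involving $R_0(z)$ does not deliver the required uniform bound. This is circumvented precisely by the cyclicity manoeuvre, which replaces the ``interior'' resolvent $R_0(z)$ by the operator $k_z(P_0)$ that is \emph{uniformly} bounded by $O(1/|z|)$ on the compact support of $h$ as soon as $|z|$ exceeds twice $\max|\mathrm{supp}\,h|$. Combined with the Hilbert--Schmidt factorization supplied by the hypothesis $\rho_0 > n$, this yields the trace-norm estimates with the right dependence on $|z|$, and in particular the announced bound $|\text{Tr}[(R(z)-R_0(z))f(P)]| \le C/|z|^2$.
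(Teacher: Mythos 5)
Your algebraic expansion of $R(z)f(P)$ and the treatment of the piece $\mathrm{Tr}[R_0(z)Vh(P_0)]$ (cyclicity to form $k_z(P_0)$, then a Schatten factorization) are sound, but the proof breaks down exactly where you wave at ``an analogous factorization'': the cross term $\mathrm{Tr}[R_0(z)V(h(P)-h(P_0))]$ and your claim (b). The cyclicity manoeuvre tames $R_0(z)$ only when, after a cyclic permutation, it lands next to a compactly supported function of $P_0$; in the cross term $R_0(z)$ sits between $V$ and $h(P)-h(P_0)$ in every cyclic arrangement, and neither neighbour localizes $P_0$. Since $\|R_0(z)\|=1/\mathrm{dist}(z,\bR_+)$, which blows up on the contours $\gamma(R,\epsilon)$ as $\epsilon\to 0$, no operator-norm or trace-norm estimate of this term is uniform near the positive real axis. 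The situation for (b) is worse: by the resolvent identity $R_0(z)VR(z)=R_0(z)-R(z)$, so $\mathrm{Tr}[R_0(z)VR(z)h(P)]=-\mathrm{Tr}[(R(z)-R_0(z))h(P)]$ is literally the quantity the lemma is about (with $h$ in place of $f$); iterating your identity produces the recursion $T_f(z)=z^{-1}\mathrm{Tr}[R_0Vf(P)]+z^{-2}\mathrm{Tr}[R_0Vf_1(P)]+z^{-2}T_{f_2}(z)$, which cannot be closed because there is no a priori bound $T_{f_2}(z)=o(|z|^2)$ uniformly up to $\bR_+$. Even if one imports the correct tool here --- the high-energy limiting absorption estimate $\|\w{x}^{-s}R_0(z)\w{x}^{-s}\|=O(|z|^{-1/2})$, $s>1/2$ --- the cross term is only $O(|z|^{-1/2})$, so your first term is $O(|z|^{-3/2})$ rather than the claimed $O(|z|^{-2})$.

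The paper sidesteps all of this by expanding in the opposite order: $(R(z)-R_0(z))f(P)=-R(z)VR(z)f(P)-R(z)VR_0(z)VR(z)f(P)$. Then, after inserting $f_1(P)$ with $f_1f=f$ and cycling, every \emph{full} resolvent appears as $R(z)f_1(P)$ or $R(z)f(P)$, whose norm is $O(1/|z|)$ by the spectral theorem (this is the analogue of your $k_z(P_0)$ trick, but for $P$, which is always possible because the identity puts $R(z)$ on the outside); the single remaining \emph{free} resolvent appears only in the sandwiched form $\w{x}^{\rho_0/2}VR_0(z)V\w{x}^{\rho_0/2}$, to which the weighted high-energy estimate $O(|z|^{-1/2})$ applies, while $f_1(P)V\in\mathscr{S}_1$ and $\w{x}^{-\rho_0/2}f_1(P)\in\mathscr{S}_2$ supply the trace-class glue under $\rho_0>n$. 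If you want to rescue your version, you must either reverse the roles of $R$ and $R_0$ in the initial resolvent identity as the paper does, or find a genuinely new argument for $\mathrm{Tr}[R_0(z)VA]$ with $A$ a fixed trace-class operator that is not a function of $P_0$ --- and the latter is essentially the Birman--Krein theory you are trying to avoid.
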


\begin{proof}
For $z\notin \sigma(P)$, using the resolvent identity, we have :
\begin{eqnarray*}
(R(z) - R_0(z)) f(P) &=& - R(z)V R_0(z)f(P) \\
                     &=& -R(z)V R(z)f(P) - R(z)V R_0(z)V R(z)f(P).
\end{eqnarray*}
Let $f_1 \in C_0^{\infty}$ such that $f_1 f =f$. Using the cyclicity of the trace, we obtain :
\begin{equation*}
\text{Tr} ~[(R(z)-R_0(z))f(P)] = (1) +(2),
\end{equation*}
with
\begin{eqnarray}
(1) &=&-\text{Tr}[ (R(z) f_1 (P)) \  (f_1(P) V)\  (R(z) f(P)) ], \\
(2) &=& - \text{Tr} [ (R(z) f_1(P)) \  (f_1(P) \w{x}^{-\frac{\rho_0}{2}}) \nonumber \\
        & & \ \ \  (\w{x}^{\frac{\rho_0}{2}} V R_0(z)V \w{x}^{\frac{\rho_0}{2}})\ (\w{x}^{-\frac{\rho_0}{2}} f_1(P))\  (R(z) f(P))].
\end{eqnarray}

\vspace{0.3cm}\noindent
For $|z|$ large, $\Im z \neq 0$, we have (\cite{Robert03}),
\begin{eqnarray*}
\mid \mid \w{x}^{\frac{\rho_0}{2}} V R_0(z) V \w{x}^{\frac{\rho_0}{2}} \mid \mid =O(|z|^{-1/2}).
\end{eqnarray*}
Moreover, by the spectral theorem,
$$
\mid \mid R(z) f(P) \mid \mid = \frac{1}{dist (z, Supp \ f)} = O (\frac{1}{\mid z\mid}).
$$
The lemma follows now from the fact that
$\w{x}^{-\frac{\rho_0}{2}} f_1(P)$ is a Hilbert-Schmidt operator and  $f_1(P) V$ is of trace class.
\end{proof}

\vspace{0.3cm}
\par\noindent
In the same way, using Theorem \ref{lrapp03} with $N=0$, one has for $z \notin \sigma (P_0), \ \mid z\mid$ small,
and $s>1$, $\rho_0 > n+2$,
\begin{eqnarray} \label{uniforme}
\text{Tr}[ R_0(z)(f(P)-f(P_0))] &=& \text{Tr}[\w{x}^{-s} (R_0 + O(\mid z\mid^{\epsilon})) \w{x}^{-s} \nonumber \\
&& \ \ \ \ \w{x}^{s} (f(P)-f(P_0)) \w{x}^{s}] \nonumber \\
&=& O(1).
\end{eqnarray}
Then, the assumption (\ref{ass3bis}) is satisfied.


\vspace{0.3cm}\noindent
In order to obtain a Levinson theorem, we have also to prove that
$\xi'(\lambda)$ is integrable on $]0,1]$. We have the following result :

\vspace{0.2cm}\noindent

\begin{theorem}\label{singularitexi}
Under the conditions of Theorem \ref{rtth01}, for some $\epsilon_0>0$,
\begin{equation}\label{singularite}
\xi'(\lambda) = O\ \left(\lambda^{-1+\epsilon_0}\right) \ ,\ \lambda \downarrow 0.
\end{equation}
\end{theorem}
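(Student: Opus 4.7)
The plan is to extract $\xi'(\lambda)$ for small $\lambda>0$ from the low-energy expansion of the trace function $T(z)=\mathrm{Tr}\,[(R(z)-R_0(z))f(P)]$ derived in Section~4, by means of a jump formula across the positive real axis. Fix $f\in C_0^\infty(\mathbb{R})$ with $f\equiv 1$ on some interval $(-\eta,\eta)$; then for $\lambda\in(0,\eta)$, which lies in the absolutely continuous spectrum of both $P$ and $P_0$, the insertion $1-f(P)$ has no spectral support near $\lambda$ and hence $(R(z)-R_0(z))(1-f(P))$ extends holomorphically across $\lambda$. Combined with the Birman--Krein / Stone inversion applied to the resolvent difference on the absolutely continuous part, this yields the distributional identity
\[
\xi'(\lambda)=\frac{1}{2\pi i}\bigl(T(\lambda+i0)-T(\lambda-i0)\bigr),\qquad 0<\lambda<\eta.
\]

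Now I would apply the expansion (\ref{Tz}) obtained at the end of the proof of Theorem \ref{rtth01}: there exist a real constant $C$ and some $\epsilon_0>0$ such that
\[
T(z)=-\frac{J_0}{z}+\frac{C}{z\ln z}+O\bigl(|z|^{-1+\epsilon_0}\bigr),
\]
uniformly for $z$ in a punctured neighborhood of $0$ with $\Im z\neq 0$, in the branch $0<\arg z<2\pi$. Since $J_0/z$ extends continuously and single-valuedly across the positive real axis, it contributes nothing to the jump. For the logarithmic correction, $\ln(\lambda+i0)=\ln\lambda$ while $\ln(\lambda-i0)=\ln\lambda+2\pi i$, so
\[
\frac{C}{\lambda\ln(\lambda+i0)}-\frac{C}{\lambda\ln(\lambda-i0)}=\frac{-2\pi i\,C}{\lambda\,\ln\lambda\,(\ln\lambda+2\pi i)}=O\!\left(\frac{1}{\lambda(\ln\lambda)^2}\right),
\]
which together with the remainder gives a jump of size $O(\lambda^{-1+\epsilon_0})$, absorbing the logarithmic factor by slightly shrinking $\epsilon_0$. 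Dividing by $2\pi i$ then yields the stated bound.

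The delicate point is the uniformity in $\Im z$ of the expansion (\ref{Tz}): Proposition~\ref{lrapp03} and Theorem~\ref{th1} are stated for $\Im z>0$, so one needs to verify that each building block and the associated remainder extend continuously down to $\arg z\to 0^+$ (and, by complex conjugation, up to $\arg z\to 2\pi^-$) with the same quantitative control. This follows from the explicit oscillatory representation (\ref{Knu}) of $K_\nu(r,\tau;z)$, a limiting absorption argument applied to the resolvent equations $R_0(z)=(1-\widetilde R_0(z)W)^{-1}\widetilde R_0(z)$ and $R(z)=(1-\widetilde R_0(z)\widetilde W)^{-1}\widetilde R_0(z)$, and the trace-norm estimates of Lemma~\ref{rtlm02}. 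Securing this boundary regularity is the main technical obstacle; once it is in place, the jump computation above closes the argument.
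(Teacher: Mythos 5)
Your underlying idea --- reading off the behaviour of $\xi'$ near $0$ from the boundary values of the expansion (\ref{Tz}) of $T(z)$ --- is the same as the paper's, but your pointwise jump-formula implementation has two genuine gaps, and the paper's actual proof (a duality argument) is designed precisely to avoid them. The first gap is the claim that $(R(z)-R_0(z))(1-f(P))$ extends holomorphically across $(0,\eta)$. The piece $R(z)(1-f(P))$ does, since $1-f$ vanishes on the spectrum of $P$ near $\lambda$; but $R_0(z)(1-f(P))$ does not, because $1-f(P)$ is not a function of $P_0$ and so does not remove the branch cut of $R_0$ along $[0,\infty)$. The correct reduction is the paper's decomposition (\ref{decomp}): the jump of $\mathrm{Tr}\,[R(z)f(P)-R_0(z)f(P_0)]$ is $2\pi i\,\xi'(\lambda)$ essentially by the definition of the SSF, but the cross term $\mathrm{Tr}\,[R_0(z)(f(P)-f(P_0))]$ also jumps across $(0,\eta)$, and its jump is a priori only the derivative of the bounded-variation function $\lambda\mapsto \mathrm{Tr}\,[E_{P_0}(\lambda)(f(P)-f(P_0))]$, i.e.\ a measure; the bounds (\ref{ass3bis}) and (\ref{uniforme}) control the boundary values themselves, not their difference, so this term is not automatically an $O(\lambda^{-1+\epsilon_0})$ \emph{function}. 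The second gap is the one you flag yourself: your formula needs the pointwise existence of $T(\lambda\pm i0)$, i.e.\ a limiting absorption principle at small positive energies in a trace-normed topology. Proposition \ref{lrapp03} and Theorem \ref{th1} give expansions uniformly for $\Im z>0$, which bounds $|T(\lambda+i\epsilon)|$ but does not produce the limit $\epsilon\to 0^+$; nothing in the paper supplies this, and it is not a routine verification.

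The paper sidesteps both problems simultaneously by working weakly: it multiplies the identity (\ref{algebre}) at $z=\lambda+i\epsilon$ by a test function $g\in C_0^\infty(]0,1[)$, integrates in $\lambda$, takes imaginary parts, and only then lets $\epsilon\downarrow 0$. The Poisson-kernel limits produce $-\pi\int \xi'(s)g(s)\,ds$ on one side, while the bound $r(\lambda,\epsilon)=O(\lambda^{-1+\epsilon_0})$, uniform in $\epsilon$, controls the other; choosing $g(s)=s^{1-\epsilon_0}\varphi(s)$ gives $\bigl|\int \xi'(s)\,s^{1-\epsilon_0}\varphi(s)\,ds\bigr|\le C\|\varphi\|_1$, hence $\lambda^{1-\epsilon_0}\xi'\in L^\infty(]0,1[)$ by $L^1$--$L^\infty$ duality, and this upgrades to the pointwise statement since $\xi'$ is continuous on $]0,\infty[$. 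If you wish to keep your route, you must (i) actually prove the limiting absorption statement near zero energy and (ii) show separately that the jump of $\mathrm{Tr}\,[R_0(z)(f(P)-f(P_0))]$ is an $O(\lambda^{-1+\epsilon_0})$ function; as written, neither is established.
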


\begin{proof}
We shall prove that
\begin{equation}\label{dualite}
\lambda^{1-\epsilon_0} \ \xi' (\lambda) \in L^{\infty}(]0,1[) = \left( L^1(]0,1[) \right)' ,
\end{equation}
or equivalently,
\begin{equation}\label{continuite}
\exists \ C>0, \ \forall \varphi \in C_0^{\infty}(]0,1[), \ \mid \int_{\bR} \
\lambda^{1-\epsilon_0} \ \xi' (\lambda) \ \varphi(\lambda) \ d\lambda \mid \ \leq \ C  \mid\mid \varphi \mid\mid_1 .
\end{equation}

\vspace{0.5cm}\noindent
Let us consider $g\in C_0^{\infty}(]0,1[; \bR)$ and $f \in C_0^{\infty}(\bR;\bR)$ such that $f \equiv 1$ on $[0,2]$,
so we have $fg=g$. For $\lambda \in ]0,1[$ and $\epsilon>0$, we have :
\begin{eqnarray}
\lefteqn{
\text{Tr}~[R(\lambda +i\epsilon) f(P) - R_0(\lambda +i\epsilon) f(P_0) ]} \label{algebre} \\
& = &
\text{Tr}~[(R(\lambda +i\epsilon) - R_0(\lambda +i\epsilon)) f(P) ]
 + \text{Tr}~[R_0(\lambda +i\epsilon) \ (f(P)- f(P_0)].  \nonumber
\end{eqnarray}
By the definition of the SSF,
\begin{equation}\label{defssf}
\text{Tr}~[R(\lambda +i\epsilon) f(P) - R_0(\lambda +i\epsilon) f(P_0) ] =
- \int_{\bR} \ \xi(s) \ \frac{\partial f}{\partial s} (s,\lambda+i\epsilon) \ ds ,
\end{equation}
where we have set
\begin{equation}
f( s,\lambda+i\epsilon) = \frac{1}{s-\lambda-i\epsilon} \ f(s).
\end{equation}
Thus, the left hand side of (\ref{algebre}) is given by
\begin{equation}
(LHS) =
\int_{\bR} \ \xi(s) \ \frac{1}{ (s-\lambda-i\epsilon)^2} \ f(s) \ ds
- \int_{\bR} \ \xi(s) \ \frac{1}{ (s-\lambda-i\epsilon)} \ f'(s) \ ds.
\end{equation}
Using (\ref{Tz}) and (\ref{uniforme}), the right hand side of (\ref{algebre}) satisfies for a suitable real
constant $C$,
\begin{equation}
(RHS) = - \frac{J_0}{\lambda +i\epsilon} + \frac{C}{(\lambda +i\epsilon) \log (\lambda +i\epsilon)}
+ r(\lambda, \epsilon),
\end{equation}
where
\begin{eqnarray}\label{reste}
r(\lambda, \epsilon) &=& O\ \left(|\lambda +i\epsilon|^{-1 +\epsilon_0}\right) +
\text{Tr}~[R_0(\lambda +i\epsilon) \ (f(P)- f(P_0)] \\
&=& O\ \left(\lambda^{-1 +\epsilon_0}\right) \ ,\  \lambda \downarrow 0.
\end{eqnarray}
Thus, we have obtained that
\begin{equation}\label{synthese}
r(\lambda, \epsilon) =  \frac{J_0}{\lambda +i\epsilon} - \frac{C}{(\lambda +i\epsilon) \log (\lambda +i\epsilon)}
 + a(\lambda, \epsilon) - b(\lambda, \epsilon) ,
 \end{equation}
 where
\begin{eqnarray}
a(\lambda, \epsilon) &=& \int_{\bR} \ \xi(s) \ \frac{1}{ (s-\lambda-i\epsilon)^2} \ f(s) \ ds, \\
b(\lambda, \epsilon) &=& \int_{\bR} \ \xi(s) \ \frac{1}{ s-\lambda-i\epsilon} \ f'(s) \ ds .
\end{eqnarray}
We shall multiply (\ref{synthese}) by $g(\lambda)$ and we shall integrate over $\lambda$. First, we remark that
\begin{eqnarray}
\int_{\bR} a(\lambda, \epsilon) \ g(\lambda) \ d\lambda &=&
-\int_{\bR} \ \xi(s) f(s) \left( \int_{\bR} \ \frac{\partial}{\partial \lambda}
\left(\frac{1}{\lambda-s +i\epsilon} \right) \ g(\lambda) \ d\lambda \right) ds  \nonumber \\
&=&
\int_{\bR} \ \xi(s) f(s) \left( \int_{\bR} \
\frac{1}{\lambda-s +i\epsilon}  \ g'(\lambda) \ d\lambda \right) ds.
\end{eqnarray}
Thus,
\begin{eqnarray} \label{thelast}
\int_{\bR} r(\lambda, \epsilon) \ g(\lambda) \ d\lambda &=&
 \int_{\bR} \frac{J_0}{\lambda+i\epsilon} \ g(\lambda) \ d\lambda -
\int_{\bR} \frac{C}{(\lambda +i\epsilon) \log (\lambda +i\epsilon)} \ g(\lambda) \ d\lambda \nonumber \\
& & + \int_{\bR} \ \xi(s) f(s) \left( \int_{\bR} \
\frac{1}{\lambda-s +i\epsilon}  \ g'(\lambda) \ d\lambda \right) ds \nonumber\\
& & + \int_{\bR} \ \xi(s) f'(s) \left( \int_{\bR} \
\frac{1}{\lambda-s +i\epsilon}  \ g(\lambda) \ d\lambda \right) ds.
\end{eqnarray}
Taking the imaginary part in (\ref{thelast}) and using that
\begin{equation}
\lim_{\epsilon \downarrow 0}  \ {\rm{Im}}\  \frac{1}{\lambda-s+i\epsilon} =\pi \ \delta_{\lambda=s},
\end{equation}
we obtain easily
\begin{equation}
 \lim_{\epsilon \downarrow 0} {\rm{Im}}\  \int_{\bR} r(\lambda, \epsilon) \ g(\lambda) \ d\lambda =
{\pi} \  \int_{\bR} \ \xi(s) \ (fg)'(s) \ ds.
\end{equation}
Now, we remark that :
\begin{equation}
\int_{\bR} \ \xi(s) \ (fg)'(s) \ ds  = \int_{\bR} \ \xi(s) \ g'(s) \ ds = - \int_{\bR} \ \xi'(s) \ g(s) \ ds ,
\end{equation}
since $\xi \in C^{\infty}(]0, +\infty[)$. Then, we choose $g(s) = s^{1-\epsilon_0} \varphi(s)$ with $\varphi \in
C_0^{\infty}(]0,1[)$, and using (\ref{reste}), we have proved (\ref{continuite}).
\end{proof}


\vspace{0.5cm}\noindent
Now, we are able to prove a Levinson's theorem for critical potentials. To do it,   we recall that
if $v$ satisfies (\ref{eq1.2}), one has the high energy asymptotics, (\cite{Robert01}, Theorem 1.2) :

\begin{equation}\label{asymptSSF}
\xi'(\lambda) \sim  \sum_{j \ge 1} c_j \ \lambda^{\frac{n}{2}-j-1} \ ,\ \lambda \rightarrow +\infty.
\end{equation}
This implies in particular that the condition (\ref{ass5}) is satisfied. We also need the following asymptotics which comes from the functional calculus on pseudodifferential operators
(see \cite{Robert01}, Theorem 1.1): Let $\chi  \in {\mathcal S}(\bR)$, with $\chi \equiv 1$ in a neighborhood
of $0$. We have :

\begin{equation}\label{calculfonc}
\text{Tr}~ [ \chi (\frac{P}{R}) - \chi (\frac{P_0}{R}) ] \sim \sum_{j\geq 1} \ \beta_j \ R^{\frac{n}{2}-j} \ ,\
R\rightarrow +\infty,
\end{equation}
where the coefficients $\beta_j$ are distributions on $\chi$ and is calculable, in principle,
in terms of the symbols of $P$ and $P_0$.

\vspace{0.5cm}\noindent
The main result of this section is the following :

\vspace{0.2cm}\noindent

\begin{theorem}\label{Levinson}
Assume $\rho_0>max( 6,n+2)$. Then, we have :
\begin{equation}
\int_0^{\infty} (\xi'(\lambda) - \sum_{j=1}^{[\frac{n}{2}]} c_j
\ \lambda^{[\frac{n}{2}]-1-j} ) \ d \lambda = -( \mathcal{N}_- + J_0) + \beta_{n/2},
\end{equation}
where $\beta _{n/2}$  depends only on $n$, $v$ and $V$. If $n$ is odd, $\beta _{n/2} =0$. If $n$ is even,
$c_{n/2} =0$.
\end{theorem}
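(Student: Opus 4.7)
The plan is to invoke the representation formula of Theorem \ref{rfth01} with the scaled cut-off family $f_R(\lambda)=\chi(\lambda/R)$, where $\chi\in C_0^\infty(\bR)$ is fixed with $\chi\equiv 1$ near $0$, and then to compare asymptotic expansions of the two sides as $R\to\infty$. For $R$ large enough, $f_R\equiv 1$ in a neighbourhood of $\sigma_{pp}(P)\cup\{0\}$, using that the negative eigenvalues of $P$ do not accumulate at $0$, a fact already established from the resolvent bound (\ref{nonaccumul}). Combining Theorems \ref{rfth01} and \ref{rtth01} one obtains, for every such $R$,
\begin{equation*}
\mathrm{Tr}\bigl(f_R(P)-f_R(P_0)\bigr) \;-\; \int_0^\infty f_R(\lambda)\,\xi'(\lambda)\,d\lambda \;=\; \vN_- \,+\, \vN_0 \,+\, \sum_{\nu\in\sigma_1}\nu\, m_\nu.
\end{equation*}

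The next step is to expand each side in powers of $R$. For the trace I would use the functional-calculus expansion (\ref{calculfonc}), which gives $\mathrm{Tr}(f_R(P)-f_R(P_0))\sim\sum_{j\ge 1}\beta_j(\chi)\,R^{n/2-j}$; this contains divergent terms $R^{n/2-j}$ for $j<n/2$, the constant term $\beta_{n/2}(\chi)$ (present only when $n$ is even), and decaying remainders. For the integral I would split
\begin{equation*}
\int_0^\infty f_R\,\xi'\,d\lambda \;=\; \int_0^\infty f_R\Bigl(\xi'-\sum_{j=1}^{[n/2]}c_j\lambda^{n/2-j-1}\Bigr)d\lambda \;+\; \sum_{j=1}^{[n/2]}c_j\int_0^\infty f_R(\lambda)\,\lambda^{n/2-j-1}\,d\lambda.
\end{equation*}
The integrand in the first term is absolutely integrable on $]0,\infty[$, by Theorem \ref{singularitexi} at $0$ and by (\ref{asymptSSF}) at infinity, so dominated convergence sends it to $\int_0^\infty(\xi'-\sum_j c_j\lambda^{n/2-j-1})\,d\lambda$ as $R\to\infty$. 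The change of variable $\mu=\lambda/R$ transforms the $j$-th polynomial integral into $R^{n/2-j}\int_0^\infty\mu^{n/2-j-1}\chi(\mu)\,d\mu$, which converges for $j<n/2$.

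The main obstacle is to justify the cancellation of the divergent powers of $R$ on the two sides. Since the right-hand side $\vN_-+J_0$ is $R$-independent, matching the coefficient of $R^{n/2-j}$ for each $j<n/2$ forces $\beta_j(\chi)=c_j\int_0^\infty\mu^{n/2-j-1}\chi(\mu)\,d\mu$. These identities can be verified by an integration by parts in the SSF definition $\mathrm{Tr}(f_R(P)-f_R(P_0))=-\int f_R'(\lambda)\,\xi(\lambda)\,d\lambda$, substituting the antiderivative of the high-energy expansion (\ref{asymptSSF}); this is precisely the compatibility between Robert's two asymptotic results (\ref{asymptSSF}) and (\ref{calculfonc}). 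In even dimension the same formal integration by parts breaks down at $j=n/2$, where an antiderivative of $c_j\lambda^{n/2-j-1}$ is a logarithm rather than a power and $\mu^{-1}\chi(\mu)$ fails to be integrable; consistency thus forces $c_{[n/2]}=0$ for $n$ even, as stated. Once all divergent powers cancel, taking $R\to\infty$ leaves only the $R^0$ contributions, namely $\beta_{n/2}$ on the trace side (interpreted as $0$ for $n$ odd) and $\int_0^\infty(\xi'-\sum_j c_j\lambda^{n/2-j-1})\,d\lambda$ on the integral side; rearranging yields the asserted Levinson identity.
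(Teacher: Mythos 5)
Your proposal is correct and follows essentially the same route as the paper: apply Theorem \ref{rfth01} with $f_R(\lambda)=\chi(\lambda/R)$, expand the trace via (\ref{calculfonc}) and the integral by adding and subtracting the high-energy polynomial, use Theorem \ref{singularitexi} and (\ref{asymptSSF}) to pass to the limit in the regularized integral, and deduce $\beta_j=c_j\int_0^\infty\chi(t)t^{n/2-1-j}dt$ for $j<n/2$ (and $c_{n/2}=0$ when $n$ is even, from the $\log R$ divergence) by matching powers of $R$ against the $R$-independent right-hand side. The only cosmetic difference is that the paper treats the odd and even cases (and $n=2$) separately and splits the integral at $\lambda=1$ to isolate the non-integrable $\lambda^{-1}$ term, whereas you fold this into the remark about the logarithmic antiderivative.
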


\begin{proof}
Let $\chi\in C_0^\infty(\bR)$ such that $\chi(\lambda)\equiv 1$ in a neighborhood of $0$.
We use Theorem \ref{rfth01} with $f(\lambda) = \chi(\frac{\lambda}{R})$. The proof is divided in two steps.

\pagebreak

\noindent {\it{Case 1. The dimension $n$ is odd.}}

\vspace{0.2cm}\noindent
We first compute the second term on the left hand side of
(\ref{rffl02}) as follows :
\begin{eqnarray*}
\int_0^\infty \chi(\frac{\lambda}{R}) \ \xi' (\lambda)~d\lambda &=&
\int_0^\infty \chi(\frac{\lambda}{R}) \ [\xi'(\lambda)-\sum_{j=1}^{[\frac{n}{2}]} c_j \  \lambda^{\frac{n}{2}
-1-j}]~d\lambda +\sum_{j=1}^{[\frac{n}{2}]} c_j  \int_0^\infty \chi(\frac{\lambda}{R}) \
\lambda^{\frac{n}{2} -1-j}~d\lambda \\
&=& \int_0^\infty \chi(\frac{\lambda}{R})\ [\xi'(\lambda)-\sum_{j=1}^{[\frac{n}{2}]} c_j \ \lambda^{\frac{n}{2}
-1-j}]~d\lambda + \sum_{j=1}^{[\frac{n}{2}]} d_j \ R^{\frac{n}{2}-j},
\end{eqnarray*}
with ${\displaystyle{d_ j= c_j \ \int_0^\infty \chi(t) \ t^{\frac{n}{2}-1-j} ~dt}}$.
Using (\ref{rffl02}) and (\ref{calculfonc}), one has
\begin{equation}\label{ltfl04}
\int_0^\infty \chi(\frac{\lambda}{R})\
[\xi' (\lambda)-\sum_{j=1}^{[\frac{n}{2}]} c_j \ \lambda^{\frac{n}{2} -1-j}]~d\lambda
= -(\mathcal{N}_- +J_0) + \sum_{j=1}^{[\frac{n}{2}]}
(\beta_j-d_j) \ R^{\frac{n}{2}-j} +O(R^{-\epsilon})
\end{equation}
where ${\displaystyle{\epsilon  = 1 -\frac{n}{2}+[\frac{n}{2}]>0}}$.

\vspace{0.2cm}\noindent
Now, let us study the left hand side of (\ref{ltfl04}). We have :
\begin{eqnarray*}
\lefteqn{\int_0^\infty \chi(\frac{\lambda}{R})\ [\xi'(\lambda)-\sum_{j=1}^{[\frac{n}{2}]} c_j\ \lambda^{\frac{n}{2} -1-j}]
~d\lambda}\\
 &=&
\int_0^1 \chi(\frac{\lambda}{R})\
[\xi'(\lambda)-\sum_{j=1}^{[\frac{n}{2}]} c_j \ \lambda^{\frac{n}{2} -1-j}]~d\lambda  + \int_1^\infty \chi(\frac{\lambda}{R})\
[\xi'(\lambda)-\sum_{j=1}^{[\frac{n}{2}]} c_j \ \lambda^{\frac{n}{2}-1-j}]~d\lambda.
\end{eqnarray*}

\vspace{0.1cm}\noindent
By Theorem \ref{singularitexi}, $\xi'(\lambda)$ is integrable on $]0,1[$, thus
\begin{equation}
\lim_{R\rightarrow +\infty}\ \int_0^1 \chi(\frac{\lambda}{R})\
[\xi'(\lambda)-\sum_{j=1}^{[\frac{n}{2}]} c_j \ \lambda^{\frac{n}{2}
-1-j}]~d\lambda = \int_0^1 [\xi'(\lambda)-\sum_{j=1}^{[\frac{n}{2}]}
c_j \ \lambda^{\frac{n}{2} -1-j}]~d\lambda.
\end{equation}

\noindent
In the same way, (\ref{asymptSSF}) implies
\begin{equation}
|\xi'(\lambda)-\sum_{j=1}^{[\frac{n}{2}]} c_j \ \lambda^{\frac{n}{2}-1-j}|
\le C \lambda^{-\nu} ,
\end{equation}
 with $\nu =2 -\frac{n}{2}+[\frac{n}{2}] >1$. Therefore,
\begin{eqnarray*}
\lim_{R\rightarrow +\infty}\ \int_1^\infty \chi(\frac{\lambda}{R})\
[\xi'(\lambda)-\sum_{j=1}^{[\frac{n}{2}]} c_j \ \lambda^{\frac{n}{2}
-1-j}]~d\lambda = \int_1^\infty [\xi' (\lambda)-\sum_{j=1}^{[\frac{n}{2}]} c_j \ \lambda^{\frac{n}{2}
-1-j}]~d\lambda.
\end{eqnarray*}

\vspace{0.3cm}\noindent
It follows that
\begin{equation}
\lim_{R\rightarrow +\infty}\ \int_0^\infty \chi(\frac{\lambda}{R})\
[\xi'(\lambda)-\sum_{j=1}^{[\frac{n}{2}]} c_j\ \lambda^{\frac{n}{2} -1-j}]~d\lambda
= \int_0^\infty [\xi'(\lambda)-\sum_{j=1}^{[\frac{n}{2}]} c_j \ \lambda^{\frac{n}{2} -1-j}]~d\lambda .
\end{equation}
Taking $R\rightarrow +\infty$ in both sides of (\ref{ltfl04}), we deduce
that $\beta_j = d_j$. Thus, we get the Levinson's Theorem for odd dimension :
\begin{equation}
\int_0^{\infty} (\xi'(\lambda) - \sum_{j=1}^{[\frac{n}{2}]} c_j
\ \lambda^{[\frac{n}{2}]-1-j} ) \ d \lambda = -( \mathcal{N}_- + J_0).
\end{equation}

\vspace{0.5cm}\noindent
{\it{Case 2: The dimension $n$ is even.}}

\vspace{0.3cm}\noindent
First, let us study the case $n=2$. Using Theorem \ref{rfth01}, we have as previously :
\begin{eqnarray}\label{negal2}
\lefteqn{\int_0^1\chi(\frac{\lambda}{R}) \ \xi'(\lambda) ~d\lambda
+ \int_1^\infty \chi(\frac{\lambda}{R})\  [\xi'(\lambda)- \frac{c_1}{\lambda}]~d\lambda}  \nonumber \\
&=& -( \mathcal{N}_- + J_0) + \beta_1 + O(\frac{1}{R}) - c_1 \int_1^{+\infty} \chi(\frac{\lambda}{R})
\frac{d\lambda}{\lambda}.
\end{eqnarray}
We take $R \rightarrow + \infty$ in (\ref{negal2}), and remarking that
\begin{equation}
\int_1^\infty\chi(\frac{\lambda}{R}) ~\frac{d\lambda}{\lambda}  \sim \log R,
\end{equation}
we deduce $c_1=0$ and we have obtained the Levinson theorem in dimension $n=2$ :
\begin{equation}
\int_0^{+\infty} \xi'(\lambda) ~d\lambda = -( \mathcal{N}_- + J_0) + \beta_1 .
\end{equation}

\vspace{0.3cm}\noindent
Now, assume $n \geq 4$. We write $n=2p$ with $p \geq 2$. We define for $1 \leq j\leq  p-1$,
\begin{equation}
d_j  = c_j \int_0^\infty \chi(t) \ t^{p-1-j} ~dt .
\end{equation}
As for the case $n=2$, we obtain easily :
\begin{eqnarray}\label{npaire}
\lefteqn{\int_0^1\chi(\frac{\lambda}{R})\  [\xi'(\lambda)-\sum_{j=1}^{p-1} c_j\ \lambda^{p -1-j}]~d\lambda
+ \int_1^\infty \chi(\frac{\lambda}{R})\  [\xi'(\lambda)-\sum_{j=1}^{p} c_j\ \lambda^{p -1-j}]~d\lambda}
\nonumber \\
&=& -(\vN_- +J_0) +\sum_{j=1}^{p-1} (\beta_j-d_j)\  R^{p-j} +\beta_p +O(\frac{1}{R})
- c_p \int_1^\infty \chi(\frac{\lambda}{R}) ~\frac{d\lambda} {\lambda}.
\end{eqnarray}
As previously, we take $R \rightarrow +\infty$ in (\ref{npaire}) and we deduce that $\beta_j= d_j$ and $c_p =0$.
Therefore, we get the Levinson's theorem for even dimension:
\begin{equation}
\int_0^\infty (\xi'(\lambda) -\sum_{j=1}^{p-1} c_j \ \lambda^{p-1-j})~d\lambda=-(\mathcal{N}_-+J_0)+\beta_p.
\end{equation}
\end{proof}

\begin{remark}
Of course, the values of $\beta_p$ appearing in the Levison theorem are independent of the cutoff function
$\chi$. We can use the functional calculus on pseudodifferential operators (\cite{HeRo})
 with the Hamiltonians $P_0 = -\Delta +v(x)-V(x)$ and $P= -\Delta + v(x)$, to compute $\beta_p$.

\vspace{0.3cm}\noindent
If $f \in C_0^{\infty}(\mathbb{R})$,  we recall that $f(tP)-f(tP_0)$ is a $\sqrt{t}$-admissible operator, i.e
\begin{equation}
f(tP)-f(tP_0) = Op_{\sqrt{t}} \left( a_f (t) \right),
\end{equation}
with
\begin{equation}
a_f (t) \sim \sum_{j \geq 1} a_{f,j} \ t^{\frac{j}{2}}.
\end{equation}
The symbols $a_{f,j}$ are defined as
\begin{equation}
a_{f,j} (x, \xi)  = \sum_{k=1}^{2j-1} (-1)^k d_{j,k} (x, \xi) \ f^{(k)} (\xi^2),
\end{equation}
where the $d_{j,k}$ are universal polynomial functions on $\xi$ which do not depend on $f$. For example, one has
\begin{eqnarray}
d_{2,1} (x, \xi) &=& V(x), \ d_{2,2} (x, \xi)= 0, \ d_{2,3} (x, \xi)=0. \\
d_{4,1} (x, \xi) &=& 0, \ d_{4,2} (x, \xi) = 2v(x)V(x) - V^2 (x), \\
d_{4,3} (x, \xi) &=& \sum_i \partial_i^2 V(x) \ \xi_i^2 -2 \sum_{i<j} \partial_{ij} V(x) \ \xi_i \xi_j, \\
d_{4,k} (x, \xi) &=0& \ if\  4 \leq k\leq 7.
\end{eqnarray}

\vspace{0.5cm}\noindent
When $t \rightarrow 0$,
\begin{equation}
\text{Tr}~ [f(tP)-f(tP_0)] \sim t^{-\frac{n}{2}} \sum_{j\geq 1} \beta_j t^j,
\end{equation}
with
\begin{equation}
\beta_j = (2\pi)^{-n} \sum_{k=1}^{4j-1} \ \int_{\mathbb{R}^n} \int_{\mathbb{R}^n}  d_{2j,k}(x,\xi) \ f^{(k)}(\xi^2) \ dx \ d\xi.
\end{equation}

\vspace{0.3cm}\noindent
As a consequence, if we set,
\begin{equation}
\gamma_n \ = Vol \ (S^{n-1}) = \frac {2 \pi^{n/2}}{\Gamma(n/2)} ,
\end{equation}
where $\Gamma$ is the well-known Gamma function, we can show :

\vspace{0.3cm}\noindent

\begin{itemize}

\item In dimension $n=2$ :
\begin{equation}
\beta_1 = \frac{1}{(2\pi)^2}\ \frac{\gamma_2}{2}\   \int_{\mathbb R^2} V(x) ~dx .
\end{equation}

\item In dimension $n=4$ :
\begin{equation}
\beta_2 = \frac{1}{(2\pi)^4}\ \frac{\gamma_4}{2}\   \int_{\mathbb R^4}
( 2 v(x) V(x) - V^2 (x) )~dx .
\end{equation}

\end{itemize}
\end{remark}







\end{document}